\numberwithin{equation}{section}
\newtheorem{thm}[equation]{Theorem}
\newtheorem*{thm*}{Theorem}
\newtheorem{prop}[equation]{Proposition}
\newtheorem{cor}[equation]{Corollary}
\newtheorem*{conj*}{Conjecture}
\newtheorem{lem}[equation]{Lemma}
\theoremstyle{definition}
\newtheorem{ex}[equation]{Example}
\newtheorem{rem}[equation]{Remark}
\newcommand{\op}{{\mathsf{op}}}
\newcommand{\nat}{\mathbb{N}}
\newcommand{\id}{\mathsf{id}}
\newcommand{\s}{\sigma}
\newcommand{\matr}[1]{\mathbb{M}_{#1}}
\DeclareMathOperator{\car}{char}
\DeclareMathOperator{\Symm}{Sym}
\DeclareMathOperator{\Skew}{Skew}
\DeclareMathOperator{\Symd}{Symd}
\DeclareMathOperator{\Syms}{Sym^\ast}
\DeclareMathOperator{\kap}{cap}
\DeclareMathOperator{\Prp}{Prp}
\DeclareMathOperator{\Pc}{Pc}
\DeclareMathOperator{\Tr}{Tr}
\DeclareMathOperator{\Prd}{Prd}
\DeclareMathOperator{\ind}{ind}
\DeclareMathOperator{\coind}{coind}
\DeclareMathOperator{\Int}{Int}
\DeclareMathOperator{\End}{End}
\DeclareMathOperator{\sw}{\mathsf{sw}}
\DeclareMathOperator{\Cor}{Cor}
\DeclareMathOperator{\rad}{rad}
\newcommand{\mg}[1]{#1^{\times}}
\newcommand{\ovl}{\overline}
\renewcommand{\leq}{\leqslant}
\renewcommand{\geq}{\geqslant}
\newcommand{\la}{\langle}
\newcommand{\ra}{\rangle}
\title{Involutions and stable subalgebras}
\date{October 2017}
\author[K.J. Becher]{Karim Johannes Becher}
\author[N. Grenier-Boley]{Nicolas Grenier-Boley} 
\author[J.-P. Tignol]{Jean-Pierre Tignol}
\address{Universit\"at Konstanz, Zukunftskolleg/FB Mathematik und Statistik, D-78457 Konstanz, Germany.}
\address{Universiteit Antwerpen, Departement Wiskunde en Informatica, Middelheimlaan~1, B-2020 Antwerpen, Belgium}
\email{KarimJohannes.Becher@uantwerpen.be}
\address{Universit\'e Rouen Normandie, Laboratoire de
  Didactique Andr\'e Revuz\linebreak (LDAR, EA 4434), Universit\'e Artois, Universit\'e Cergy-Pontoise,  Universit\'e Paris Diderot, Universit\'e Paris Est Cr\'eteil, F-76130 Mont-Saint-Aignan, France.}
  \email{nicolas.grenier-boley@univ-rouen.fr}
\address{Universit\'e catholique de Louvain, ICTEAM Institute, Avenue
  G.~Lema\^{\i}tre 4, Box~L4.05.01,
B-1348 Louvain-La-Neuve, Belgium.}
\email{jean-pierre.tignol@uclouvain.be}
\begin{document}

\begin{abstract}

Given a central simple algebra with involution over an arbitrary field, \'etale subalgebras contained in the space of symmetric elements are investigated.
The method emphasizes the similarities between the various types of involutions and privileges a unified treatment for all characteristics whenever possible.
As a consequence a conceptual proof of a theorem of Rowen is obtained, which
asserts that every division algebra of exponent two and degree eight contains a maximal subfield that is a triquadratic extension of the centre.

\medskip\noindent
{\sc{Keywords:}} Central simple algebra, Double Centraliser Theorem, maximal \'etale subalgebra, capacity, Jordan algebra, crossed product, characteristic two

\medskip\noindent
\sc{Classification (MSC 2010):} 16H05, 16R50, 16W10
\end{abstract}

\maketitle
\section{Introduction}

We investigate \'etale algebras in the space of symmetric elements of
a central simple algebra with involution over an arbitrary field, emphasizing the similarities between the various types of involutions and avoiding restrictions on the characteristic. 
In Section~\ref{S:algebras} and Section~\ref{S:cap} we recall the terminology and some crucial techniques for algebras with involution. 
We enhance this terminology in a way that allows us to avoid unnecessary case distinctions in the sequel, according to the different types of involution and to the characteristic.
To this end we introduce in Section~\ref{S:cap} the notion of \emph{capacity} of an algebra with involution. It is defined to be the degree of the algebra if the involution is orthogonal or unitary, and half the degree if the involution is symplectic.
In Section~\ref{S:neat} we isolate a notion of \emph{neat} subalgebra, which captures the features of separable field extensions of the centre  consisting of symmetric elements while avoiding the pathologies that may arise with arbitrary \'etale algebras. 
We prove their existence and determine their maximal dimension to be
equal to the capacity (Theorem~\ref{thm:capmaxdim} and
  Proposition~\ref{prop:maxneat}). 
In Section~\ref{S:neat-quad}, given a neat quadratic subalgebra $K$, we establish the existence of a neat subalgebra $L$ linearly disjoint from~$K$ and
centralising $K$ and such that the composite $KL$ is a neat algebra of maximal dimension (Theorem~\ref{thm:neatquad}). 
In Section~\ref{S:cap4} we apply this result to construct neat biquadratic subalgebras in the space of symmetric elements of central simple algebras of degree~$4$ with orthogonal or unitary involutions, and similarly of central simple algebras of degree $8$ with symplectic involutions (Theorem~\ref{T:neat-quad-sym-exist}). 
As a consequence, we obtain a conceptual proof of a theorem of Rowen, which asserts that division algebras of exponent~$2$ and degree~$8$ are
elementary abelian crossed products, i.e., they contain a maximal subfield which is a triquadratic separable extension of the centre (Corollary~\ref{C:AR}).
Actually we obtain directly a refined version of this result which says that any symplectic involution on a central simple algebra of degree $8$ stabilizes some triquadratic \'etale extension of the centre (Theorem~\ref{T:symp-deg8-triquad}).
This has been proven in \cite[Lemma~6.1]{GPT09} for division algebras in characteristic different from two, but there the proof uses Rowen's Theorem, which we obtain here as a consequence.
This illustrates the usefulness of involutions in the investigation of
central simple algebras of exponent two.

The results of this paper will be used in \cite{BGBT3}, which proposes
a common approach to the definition of the first cohomological
invariant (discriminant) of the involutions of capacity four of
various types through
Pfister forms in arbitrary characteristic.

\section{Algebras} 
\label{S:algebras}

In this preliminary section we introduce and recall some definitions and facts from the theory of finite-dimensional simple and semisimple algebras.
Our standard references are \cite{Pierce} and \cite{GS06}.
\medskip

Let $F$ be an arbitrary field.
For a commutative $F$-algebra $K$ we set $[K:F]=\dim_FK$. 
Recall that an $F$-algebra is \emph{\'etale} if it is isomorphic to a finite product of finite separable field extensions of $F$.
An \'etale $F$-algebra is said to
be \emph{split} if it is $F$-isomorphic to $F^n$ for some $n\in\nat$.

\begin{lem}\label{L:split-etale-infinite}
Let $L$ be a split \'etale $F$-algebra with $|F|>[L:F]$.
Then $L=F[a]$ for an element $a\in \mg{L}$ which is separable over $F$.
\end{lem}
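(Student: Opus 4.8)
The plan is to use the hypothesis to reduce to the concrete algebra $F^n$ with $n=[L:F]$ and to write down a generator by hand. Fix an $F$-algebra isomorphism $L\cong F^n$. Since $|F|>n$ we have $|\mg{F}|=|F|-1\geq n$, so we may choose pairwise distinct elements $a_1,\dots,a_n\in\mg{F}$, and we let $a\in L$ be the element corresponding to $(a_1,\dots,a_n)$ under the chosen isomorphism. As each $a_i$ is a unit in $F$, we have $a\in\mg{L}$.

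It remains to check that $a$ generates $L$ and is separable over $F$. For generation it suffices to see that $1,a,\dots,a^{n-1}$ are linearly independent over $F$, since they then form a basis of the $n$-dimensional $F$-vector space $L$. A dependence relation $\sum_{j=0}^{n-1}c_ja^j=0$ says exactly that the polynomial $g=\sum_{j=0}^{n-1}c_jX^j$ vanishes at each of the $n$ distinct scalars $a_1,\dots,a_n$; since $\deg g\leq n-1$, this forces $g=0$. Consequently $L=F[a]$, and the minimal polynomial of $a$ over $F$ is $\prod_{i=1}^n(X-a_i)$: this polynomial annihilates $a$ and has degree $n$, and by the previous sentence no nonzero polynomial of degree smaller than $n$ annihilates $a$. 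As $\prod_{i=1}^n(X-a_i)$ is a product of pairwise distinct monic linear factors, it is separable, and hence $a$ is separable over $F$.

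There is essentially no obstacle in this argument; the only two points needing a word of justification are the cardinality count $|\mg{F}|\geq n$ — which is exactly where the hypothesis $|F|>[L:F]$ is used, and which cannot be dropped if one insists that $a$ be a unit — and the Vandermonde-type independence of the powers of $a$. Alternatively one can package the same idea through the Chinese Remainder Theorem: for distinct $a_1,\dots,a_n\in\mg{F}$ the polynomial $f=\prod_{i=1}^n(X-a_i)$ satisfies $f(0)\neq 0$, so the class of $X$ is a unit in $F[X]/(f)\cong\prod_{i=1}^nF[X]/(X-a_i)\cong F^n\cong L$, and this class is a separable generator.
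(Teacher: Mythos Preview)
Your proof is correct and follows exactly the same approach as the paper's: pick $a=(a_1,\dots,a_n)\in F^n$ with distinct nonzero entries and observe that its minimal polynomial $\prod_{i=1}^n(X-a_i)$ is separable of degree $n$. You have simply spelled out in more detail the Vandermonde independence and the cardinality count $|\mg{F}|\geq n$ that the paper leaves implicit.
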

\begin{proof}
In $F^n$ any element $a=(a_1,\ldots,a_n)$ with distinct $a_1, \ldots,a_n\in \mg{F}$ is invertible and has minimal polynomial
  $\prod_{i=1}^n(X-a_i)$ over $F$, which is separable of degree~$n$, whereby $L=F[a]$.
\end{proof}

Let $A$ be an $F$-algebra. We denote by $Z(A)$ the centre of $A$ and by $A^\op$ the opposite algebra of $A$.

\begin{lem}\label{lem:module-centre}
Let $K=Z(A)$ and assume that $K$ is an \'etale $F$-algebra.
Let $L$ be a commutative semisimple $F$-subalgebra of $A$ which is $F$-linearly disjoint from $K$.
Then $A$ is free as a left (resp.~right) $L$-module if and only if $A$ is free as a left (resp.~right) $KL$-module.
\end{lem}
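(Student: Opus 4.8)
The plan is to combine the structure theory of $A$ with the semisimplicity of $L$ and $KL$. Write $K=\prod_iK_i$ with $K_i/F$ finite separable and, accordingly, $A=\prod_iA_i$ with $A_i$ central simple over $K_i$. Since $L$ is $F$-linearly disjoint from $K$, the multiplication map identifies $KL$ with $K\otimes_FL$; in particular $KL$ is a semisimple commutative $F$-algebra which, under the inclusion $L\hookrightarrow KL$, is a free left (and right) $L$-module of rank $[K:F]$, because $K\otimes_FL\cong L^{[K:F]}$ as $L$-modules (with $L$ acting through the second factor).

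The implication ``$A$ free over $KL$ $\Rightarrow$ $A$ free over $L$'' is then formal: restricting a free $KL$-module $A\cong(KL)^{r}$ along $L\hookrightarrow KL$ yields $A\cong L^{r[K:F]}$, and symmetrically on the right; nothing about the particular algebra $A$ is used here.

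For the converse I would reduce to a rank count. A finitely generated module over a finite product of fields is free precisely when it has constant rank on the (finite) spectrum, and this applies both to $L$ and to $KL$. Fix a primitive idempotent $f$ of $L$, let $\kappa=fL$ be the corresponding factor field, and note that the primitive idempotents of $KL$ lying above $f$ correspond to the points of $\operatorname{Spec}(K\otimes_F\kappa)$, an \'etale $\kappa$-algebra of dimension $[K:F]$; the associated summand of $A$ is $fA$, which is a module over $K\otimes_F\kappa$. If one knows that $fA$ is \emph{free} over $K\otimes_F\kappa$, then its rank at every point above $f$ equals $\dim_\kappa(fA)/[K:F]$, and since $A$ is free over $L$ this value is independent of $f$; hence $A$ has constant rank over $KL$ and is free over $KL$ (and symmetrically for right modules).

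The main obstacle is exactly this freeness of $fA$ over $K\otimes_F\kappa$. It is not a consequence of formal module theory but uses that $A$ is, over its centre $K$, a sheaf of central simple algebras, together with the linear-disjointness hypothesis. I would try to establish it by base change to a separable closure $\overline\kappa$ of $\kappa$: there $A\otimes_F\overline\kappa$ is a product of matrix algebras over $\overline\kappa$ on which $K\otimes_F\overline\kappa\cong\overline\kappa^{\,[K:F]}$ acts through orthogonal idempotents, and one computes the relevant multiplicities directly; freeness then descends to $\kappa$ because over a product of fields ``free'' means ``projective of locally constant rank'', and both notions descend along the faithfully flat map $\kappa\to\overline\kappa$. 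Carrying out this bookkeeping uniformly — in every characteristic, and for all the shapes the factors of $L$, of $K$, and of $K\otimes_F\kappa$ can take — is where the real work lies.
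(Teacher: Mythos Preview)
Your easy direction (free over $KL$ $\Rightarrow$ free over $L$) coincides with the paper's. For the converse, however, the paper bypasses entirely the obstacle you isolate, via a short global trick rather than a pointwise rank computation.

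The paper argues as follows. Since $A$ is free over $L$, the base change $K\otimes_F A$ is free over $K\otimes_F L$. Now $A$ is itself a $K$-algebra, so $K\otimes_F A\simeq(K\otimes_F K)\otimes_K A$; combining this with the $K$-module isomorphism $K\otimes_F K\simeq K^{[K:F]}$ gives $K\otimes_F A\simeq A^{[K:F]}$ as $KL$-modules. Thus $A^{[K:F]}$ is free over $KL$. Finally, since $KL$ is a finite product of fields, freeness of a finitely generated module is exactly constancy of the fibre dimensions, and this is clearly preserved and reflected under taking powers: $M^n$ is free if and only if $M$ is. Hence $A$ is free over $KL$.

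So the paper never needs to establish that $fA$ is free over $K\otimes_F\kappa$ for each primitive idempotent $f$ of $L$; the passage through $A^{[K:F]}$ absorbs that issue in one stroke. In particular the paper uses nothing about $A$ beyond its being a $K$-algebra, whereas your sketch leans on the decomposition of $A$ into central simple pieces over the factors of $K$ (a hypothesis not actually present in the lemma) and on a base-change-and-descent computation you yourself describe as ``where the real work lies.'' The paper's route is both shorter and makes that work unnecessary.
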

\begin{proof}
We prove the statement for left modules, the proof for right modules is analogous.
Note that the commutative $L$-algebra $KL$ is isomorphic to $K\otimes_F L$, which is free as an $L$-module. 
Hence, if $A$ is free as a left $KL$-module, then it is free as a left $L$-module.

Suppose conversely that $A$ is free as a left $L$-module.
Then $K\otimes_F A$ is free as a left
$K\otimes_FL$-module.
We have $K\otimes_FK\simeq K^{[K:F]}$ as $K$-modules and thus obtain isomorphisms of left $K\otimes_FL$-modules
$$K\otimes_FA\simeq K\otimes_F K\otimes_K A \simeq K^{[K:F]}\otimes_KA\simeq A^{[K:F]}\,.$$
Identifying $K\otimes_F L$ with $KL$ we conclude that $A^{[K:F]}$ is 
free as a $KL$-module.
We will show that this is only possible if $A$ itself is free as a left $KL$-module.

Since $K$ is \'etale and $F$-linearly disjoint from $L$, it follows from \cite[Chap.~V, \S6, N$^\circ$~7]{Bou-A4-7} that $KL\simeq K_1\times \dots\times K_r$ for some  fields $K_1,\dots,K_r$. 
Consider a finitely generated module $M$ over $K_1\times \dots\times K_r$. Then $M$ is of the form $M_1\times \dots \times M_r$ where $M_i$ is a $K_i$-vector space for $i=1,\dots,r$. Furthermore $M$ is free if and only if the dimensions $\dim_{K_i}M_i$ for $i=1,\dots,r$ are all the same.
In particular, $M^n$ is free for an arbitrary positive integer $n$ if and only if $M$ is free.
\end{proof}

We call the $F$-algebra $A$ \emph{central simple} if $\dim_FA<\infty$, $Z(A)=F$ and $A$ is simple as a ring.
Let $A$ be a finite-dimensional simple $F$-algebra.
Then $K=Z(A)$ is a field and $A$ is a central simple $K$-algebra.
By Wedderburn's Theorem (cf.~\cite[Theorem~2.1.3]{GS06}) we have $\dim_KA=n^2$ for some positive integer $n$, which is called the \emph{degree of $A$} and denoted by $\deg A$.
Moreover, $A$ is Brauer equivalent to a central division $K$-algebra $D$, which is is unique up to $K$-isomorphism. The degree of $D$ is called the \emph{index of $A$} and denoted by $\ind A$.
If $\ind A=1$ then $A$ is $K$-isomorphic to $\matr{n}(K)$ for $n=\deg A$, and in this case we say that $A$ is \emph{split}.
We further set $\coind A=\frac{\deg A}{\ind A}$ and call this the \emph{coindex of $A$}.
Hence, for any finite-dimensional division $F$-algebra $D$ and any positive integer $n$ we have $\coind\matr{n}({D})=n$.

Let $A$ be an $F$-algebra.
For any $F$-subalgebra $B$ of $A$ we obtain an $F$-subalgebra
$$C_A(B)=\{x\in A\mid xb=bx\mbox{ for all }b\in B\},$$
called the \emph{centraliser of $B$ in $A$}.

An element $e\in A$ is called an \emph{idempotent} if $e^2=e$.
For any nonzero idempotent $e\in A$ the  ring $eAe$  with unity $e$ becomes an $F$-algebra by identifying $F$ with $eF$. Moreover, if $A$ is a central simple $F$-algebra, then the $F$-algebra $eAe$ is also central simple, and it is Brauer equivalent to $A$.

A crucial tool in the study of central simple algebras and their simple subalgebras is the Double Centraliser Theorem.
We refer to ~\cite[Sect.~12.7]{Pierce} for the statement.
The following is an extension of this statement for the case of commutative subalgebras.
\begin{prop}
  \label{prop:neat}
  Let $A$ be a finite-dimensional simple $F$-algebra.
  Assume that $Z(A)$ is separable over~$F$. Let $L$ be a commutative semisimple $F$-subalgebra of $A$ that is $F$-linearly disjoint from $Z(A)$.
  Then $C_A(L)$ is a
  semisimple $F$-algebra with centre ~$L$ and
   \[ [L:F]\cdot\dim_FC_A(L) \geq \dim_FA.\]
  Moreover, the following conditions are equivalent:
  \begin{enumerate}[$(a)$]
  \item
  $[L:F]\cdot\dim_F C_A(L)=\dim_F A$;
  \item
  all simple components of $C_A(L)$ have the same degree;
  \item
  $A$ is free as a left $L$-module;
 \item
  $A$ is free as a right $L$-module.
  \end{enumerate}
  They hold in particular whenever $L$ is a field or $[L:F]=\deg A$.
\end{prop}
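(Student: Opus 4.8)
The plan is to reduce the statement to the classical Double Centraliser Theorem \cite[Sect.~12.7]{Pierce}, applied inside the simple components of the commutative semisimple algebra generated by $L$ and $Z(A)$, and then to extract the dimension inequality from the Cauchy--Schwarz inequality. Write $K=Z(A)$; since $A$ is finite-dimensional and simple, $K$ is a field and $A$ is central simple over $K$. As $K$ is central in $A$, we have $C_A(L)=C_A(KL)$, where $KL$ denotes the $F$-subalgebra of $A$ generated by $K$ and $L$. Since $K$ is étale over $F$ (being a finite separable field extension) and $F$-linearly disjoint from the commutative semisimple algebra $L$, it follows as in the proof of Lemma~\ref{lem:module-centre}, using \cite[Chap.~V, \S6, N$^\circ$~7]{Bou-A4-7}, that $KL\cong K\otimes_F L$, which is a commutative semisimple $F$-algebra, say $KL\cong K_1\times\dots\times K_r$ with each $K_i$ a finite field extension of $K$ and $\sum_{i=1}^r[K_i:K]=[KL:K]=[L:F]$.

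Let $e_1,\dots,e_r$ be the primitive idempotents of $KL$, so that $1=\sum_i e_i$, $e_ie_j=0$ for $i\neq j$ and $K_i=e_i(KL)$. Any $x\in C_A(KL)$ commutes with each $e_i$, so $e_ixe_j=0$ for $i\neq j$ and $x=\sum_i e_ixe_i$, with $e_ixe_i\in e_iAe_i$ commuting with $K_i$; conversely, any such family of elements centralises $KL$. Hence $C_A(L)=C_A(KL)=\prod_{i=1}^r C_{e_iAe_i}(K_i)$. As recalled above (applied over $K$), each $e_iAe_i$ is central simple over $K$ and Brauer equivalent to $A$, hence isomorphic to $\matr{r_i}(D)$, where $D$ is the central division $K$-algebra Brauer equivalent to $A$ and $r_i$ is a positive integer, with $\sum_i r_i=\coind A$. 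Applying the Double Centraliser Theorem to the simple (in fact commutative) subalgebra $K_i$ of $e_iAe_i$ shows that $C_{e_iAe_i}(K_i)$ is simple with centre $K_i$ and that $[K_i:K]\cdot\dim_KC_{e_iAe_i}(K_i)=\dim_K(e_iAe_i)=r_i^2(\ind A)^2$. Consequently $C_A(L)$ is semisimple with centre $\prod_i K_i=Z(A)\cdot L$ (which is $L$ when $Z(A)=F$).

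For the dimension bound, put $a_i=[K_i:K]$ and $d=\ind A$, so $\dim_K C_{e_iAe_i}(K_i)=r_i^2d^2/a_i$ and hence
\[[L:F]\cdot\dim_F C_A(L)=[K:F]\Bigl(\sum_i a_i\Bigr)\Bigl(\sum_i\tfrac{r_i^2d^2}{a_i}\Bigr)\geq[K:F]\,d^2\Bigl(\sum_i r_i\Bigr)^2=[K:F](\deg A)^2=\dim_F A,\]
where the inequality is Cauchy--Schwarz applied to the vectors $(\sqrt{a_i})_i$ and $(r_i/\sqrt{a_i})_i$, with equality precisely when the ratio $r_i/a_i$ is the same for every $i$. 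This already yields the equivalence of $(a)$ with the latter condition. For $(b)$, note that $C_{e_iAe_i}(K_i)$ is central simple over $K_i$ of degree $r_id/a_i$, so the simple components of $C_A(L)$ all have the same degree exactly when $r_i/a_i$ is independent of $i$. For $(c)$ and $(d)$, Lemma~\ref{lem:module-centre} lets us replace $L$ by $KL$; decomposing $A=\bigoplus_i e_iA$ as a left $KL$-module (respectively $A=\bigoplus_i Ae_i$ as a right one), each summand is a $K_i$-vector space of dimension $r_i(\coind A)d^2/a_i$, so $A$ is free over $KL$ exactly when $r_i/a_i$ does not depend on $i$. Hence $(a)$--$(d)$ are all equivalent.

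Finally, if $L$ is a field then $A$ is an $L$-vector space and $(c)$ holds trivially; and if $[L:F]=\deg A$, then each $K_i$ is a subfield of the central simple $K$-algebra $e_iAe_i$ of degree $r_id$, so $a_i\leq r_id$, and $\sum_i a_i\leq d\sum_i r_i=(\coind A)(\ind A)=\deg A=[L:F]=\sum_i a_i$ forces $a_i=r_id$ and thus $r_i/a_i=1/d$ for all $i$. The main obstacle I anticipate is organisational rather than conceptual: the classical Double Centraliser Theorem demands a \emph{simple} subalgebra, so the substance of the argument lies in cutting $A$ along the idempotents of $KL$ and then keeping close enough track of the invariants $r_i$ and $a_i$ to obtain the Cauchy--Schwarz estimate together with a clean description of its equality case, with Lemma~\ref{lem:module-centre} providing the bridge between freeness over $L$ and over $KL$.
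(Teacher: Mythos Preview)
Your proof is correct and follows essentially the same strategy as the paper: decompose along the primitive idempotents of the commutative semisimple subalgebra, apply the Double Centraliser Theorem in each simple piece, and extract the inequality together with its equality condition. The differences are cosmetic rather than structural. The paper first reduces to $Z(A)=F$ via Lemma~\ref{lem:module-centre} and then works with the idempotents of $L$ itself, parametrising by $\ell_i=[L_i:F]$ and $d_i=\deg C_{A_i}(L_i)$; you skip the reduction, work with the idempotents of $KL$, and parametrise by $a_i=[K_i:K]$ together with the extra datum $r_i$ coming from the Brauer equivalence $e_iAe_i\sim A$. For the inequality the paper writes out the Lagrange identity
\[
\Bigl(\sum_i\ell_i\Bigr)\Bigl(\sum_i\ell_id_i^2\Bigr)-\Bigl(\sum_i\ell_id_i\Bigr)^2=\sum_{i<j}\ell_i\ell_j(d_i-d_j)^2,
\]
which is of course the same content as your Cauchy--Schwarz step but has the advantage of displaying the defect term explicitly, so the equivalence $(a)\Leftrightarrow(b)$ is read off directly. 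Your route via Brauer equivalence is a pleasant touch but not needed: the paper never invokes $\ind A$, and its parameters $d_i$ are exactly your $r_id/a_i$. One small remark: you correctly observe that the centre of $C_A(L)$ is $Z(A)\cdot L$ rather than $L$ when $Z(A)\neq F$; the paper sidesteps this by reducing to $Z(A)=F$ at the outset.
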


\begin{proof}
Let $K=Z(A)$.
Note that $[KL:K]=[L:F]$, $C_A(KL)=C_A(L)$ and further that $\dim_FC_A(L)=[K:F]\cdot \dim_KC_A(KL)$ and $\dim_FA=[K:F]\cdot \dim_KA$.
In view of the statement and of Lemma~\ref{lem:module-centre}, we may therefore replace $K$ by $F$ and $LK$ by $L$. Hence we may assume in the sequel that $A$ is central simple as an $F$-algebra.

If now $L$ is a field, then $(a)$ holds by the Double
Centraliser Theorem and furthermore conditions $(b)$--$(d)$ are
trivially satisfied. This case will be used to show the statement in general.

More generally, let $e_1$, \ldots, $e_r$ be the primitive idempotents of $L$. For
  $i=1, \ldots, r$ we set $A_i=e_iAe_i$ and $L_i=e_iL$. Thus,
  identifying $F$ with $Fe_i\subseteq L_i$, each $L_i$ is a finite 
  field extension of $F$ contained in the central simple $F$-algebra
  $A_i$, and in the decomposition $A=\bigoplus_{i,j=1}^r e_iAe_j$ we
  have
  \[
  C_A(L)=C_{A_1}(L_1)\oplus\cdots\oplus C_{A_r}(L_r).
  \]
  Each $C_{A_i}(L_i)$ is a simple $F$-algebra with centre  $L_i$, whereby
  $C_A(L)$ is a semisimple algebra with centre ~$L$.
For $i=1,\dots,r$ we set $\ell_i=[L_i:F]$ and $d_i=\deg C_{A_i}(L_i)$ and obtain from the Double Centraliser Theorem that $$\dim_F A_i=  [L_i:F]\cdot \dim_F C_{A_i}(L_i)=\ell_i^2d_i^2,$$ whereby $\deg A_i=\ell_id_i$.
It follows that $\deg A=\sum_{i=1}^r\ell_id_i$ and
  \begin{equation*}
    \dim_F C_A(L) = \sum_{i=1}^r\, \dim_F C_{A_i}(L_i)= \sum_{i=1}^r \ell_id_i^2.
  \end{equation*}
As  $[L:F]=\sum_{i=1}^r \ell_i$ it 
  follows that
 \begin{eqnarray*}
  [L:F]\cdot\dim_FC_A(L) &= & \Bigl(\sum_{i=1}^r\ell_i\Bigr) \cdot
  \Bigl(\sum_{i=1}^r\ell_id_i^2\Bigr) \\
  & = &
  \Bigl(\sum_{i=1}^r\ell_id_i\Bigr)^2 + \sum_{i<j}
  \ell_i\ell_j(d_i-d_j)^2\\
 & =  &(\deg A)^2 + \sum_{i<j}
  \ell_i\ell_j(d_i-d_j)^2.
  \end{eqnarray*}
  This proves the inequality in the statement as well as the equivalence
  of~$(a)$ and $(b)$ because the last term on the right hand side vanishes if
  and only if $d_1=\cdots=d_r$. To prove the equivalence of $(b)$ with
  $(c)$, note that for $i=1, \ldots, r$ we have
  \[
  \dim_F e_iA  =\deg A_i \cdot\deg A =\ell_id_i\deg A, 
  \]
as one sees easily by reduction to the split case,  and thus
   $\dim_{L_i} e_iA =d_i\deg A $.
  Hence Condition~$(b)$ holds if and only if
  $\dim_{L_1} e_1A =\cdots=\dim_{L_r} e_rA$, which is 
 Condition~$(c)$.
The proof of the equivalence of $(b)$ with $(d)$ is completely analogous.

Finally, if 
$[L:F]=\deg A$, then
 $\sum_{i=1}^r\ell_i=[L:F]=\deg A=\sum_{i=1}^r\ell_id_i$, and thus $d_1=\dots=d_r=1$, which implies Condition $(b)$.
\end{proof}
Let $K$ be a quadratic \'etale $F$-subalgebra of $A$ and let $\gamma$ denote its nontrivial $F$-automorphism.
We denote
$$C'_A(K)=\{x\in A\mid xk=\gamma(k)x\mbox{ for all }k\in K\}\,.$$

If $A$ is a finite-dimensional semisimple $F$-algebra then for an element $a\in A$ we denote by $$\Prd_{A,a}(X)\in F[X]$$
its reduced characteristic polynomial (see \cite[\S9]{Reiner}).

\begin{prop}
  \label{lem:PC}
Assume that $A$ is a central simple $F$-algebra. Let $K$ be an $F$-subalgebra of $A$ isomorphic to $F\times F$.
Let $e_1$ and $e_2$ be the primitive idempotents of $K$ and $A_i=e_iAe_i$ for $i=1,2$.
Suppose that $\deg A_1 =\deg A_2$.
Let $a\in C'_A(K)$, $u=e_1ae_2$ and $v=e_2ae_1$.
Then $a=u+v$ and
  $$\Prd_{A,a}(X)=\Prd_{A_1,uv}(X^2)=\Prd_{A_2,vu}(X^2)\in F[X]\,.$$
\end{prop}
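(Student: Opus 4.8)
The plan is to treat the two assertions separately: the decomposition $a=u+v$ follows at once from a short computation with the primitive idempotents $e_1,e_2$, while the identity for $\Prd_{A,a}$ will be obtained by extending scalars to a splitting field of $A$ and applying the classical block-matrix (Schur complement) determinant formula.

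For the first part, I would use that the nontrivial $F$-automorphism $\gamma$ of $K\simeq F\times F$ interchanges the two primitive idempotents, so $\gamma(e_1)=e_2$ and $\gamma(e_2)=e_1$. Hence for $a\in C'_A(K)$ one has $ae_1=\gamma(e_1)a=e_2a$ and $ae_2=e_1a$, whence $e_1ae_1=e_1e_2a=0$ and $e_2ae_2=e_2e_1a=0$. Expanding $a=(e_1+e_2)a(e_1+e_2)$ and discarding the vanishing terms then yields $a=e_1ae_2+e_2ae_1=u+v$. The same relations give $u^2=e_1a(e_2e_1)ae_2=0$ and $v^2=0$, as well as $uv=e_1ae_2ae_1\in e_1Ae_1=A_1$ and $vu=e_2ae_1ae_2\in e_2Ae_2=A_2$, so that the reduced characteristic polynomials $\Prd_{A_1,uv}$ and $\Prd_{A_2,vu}$ are meaningful.

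For the polynomial identity, I would pass to a splitting field $E$ of $A$ (reduced characteristic polynomials are unchanged under scalar extension) and fix an isomorphism $A\otimes_FE\simeq\End_E(W)$. The pair $e_1,e_2$ remains a complete set of orthogonal idempotents, giving $W=W_1\oplus W_2$ with $W_i=e_iW$ and $A_i\otimes_FE\simeq\End_E(W_i)$, and the hypothesis $\deg A_1=\deg A_2$ forces $\dim_EW_1=\dim_EW_2=\frac{1}{2}\deg A$. With respect to this decomposition $u$ restricts to a map $W_2\to W_1$ and $v$ to a map $W_1\to W_2$, so $a=u+v$ has block matrix $\left(\begin{smallmatrix}0&U\\V&0\end{smallmatrix}\right)$, while $uv$ and $vu$ correspond to $\left(\begin{smallmatrix}UV&0\\0&0\end{smallmatrix}\right)$ and $\left(\begin{smallmatrix}0&0\\0&VU\end{smallmatrix}\right)$. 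Using the Schur complement identity $\det\left(\begin{smallmatrix}XI&-U\\-V&XI\end{smallmatrix}\right)=\det(XI)\cdot\det(XI-X^{-1}VU)=\det(X^2I-VU)$, one gets $\Prd_{A,a}(X)=\det(XI-a)=\det(X^2I-VU)=\Prd_{A_2,vu}(X^2)$. Finally $UV$ and $VU$ are square matrices of the same size and so have the same characteristic polynomial, giving $\Prd_{A_1,uv}(X^2)=\det(X^2I-UV)=\det(X^2I-VU)=\Prd_{A_2,vu}(X^2)$, which completes the proof.

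I do not expect a genuine obstacle here; the argument is essentially bookkeeping. The two points that deserve care are the reduction to the split case—one must check that the isomorphism $A\otimes_FE\simeq\End_E(W)$ transports the whole configuration $(e_1,e_2,a,u,v)$ compatibly, so that $u\mapsto\left(\begin{smallmatrix}0&U\\0&0\end{smallmatrix}\right)$, $v\mapsto\left(\begin{smallmatrix}0&0\\V&0\end{smallmatrix}\right)$ and the relevant reduced characteristic polynomials are unchanged—and the block determinant computation, where the Schur complement formula should be read as an identity in the rational function field $E(X)$ (or justified by specialising from the generic matrix) rather than as an evaluation with $X$ a scalar. The hypothesis $\deg A_1=\deg A_2$ is exactly what makes $U$ and $V$ square of the same size, hence is indispensable for the stated symmetric form of the conclusion.
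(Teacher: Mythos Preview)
Your proof is correct and shares the paper's overall strategy---prove $a=u+v$ by idempotent bookkeeping, then extend scalars to a splitting field and compute with block matrices---but the block computation itself is different. The paper observes that $\Tr(a^{2k+1})=0$ and $\Tr(a^{2k})=2\Tr\bigl((uv)^k\bigr)=2\Tr\bigl((vu)^k\bigr)$ for all $k\in\nat$, and then invokes Newton's identities to conclude that the characteristic polynomials agree; since recovering the coefficients from the power sums requires division, the paper first passes to generic matrices over~$\mathbb{Z}$ and argues by specialisation. Your Schur complement argument, read as an identity in $E(X)$ as you note, works directly over any field and avoids both Newton's identities and the generic-matrix device; together with the standard fact that $UV$ and $VU$ have the same characteristic polynomial when square of the same size, it gives a shorter and intrinsically characteristic-free route. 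The paper's approach has the minor side benefit of making the trace relations explicit, but for the statement at hand your argument is the more economical one.
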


\begin{proof}
We have that $ae_1=e_2a$ and $ae_2=e_1a$, hence
$e_1a=e_1ae_2=u$ and $e_2a=e_2ae_1=v$, which yields that
$a=e_1a+e_2a=u+v$,
\[
uv=e_1ae_2ae_1=e_1a^2e_1\in A_1 \quad\text{and}\quad
vu=e_2ae_1ae_2=e_2a^2e_2\in A_2.
\]

To prove the equalities we may extend scalars to a splitting field of
  $A$. Thus we assume that $A$ is split and identify $A$
  with a matrix algebra in such a way that 
  $e_1=\bigl(
  \begin{smallmatrix}
    1&0\\0&0
  \end{smallmatrix}
  \bigr)$, $e_2=\bigl(
  \begin{smallmatrix}
    0&0\\0&1
  \end{smallmatrix}
  \bigr)$ and $a=\bigl(
  \begin{smallmatrix}
    0&u\\ v&0
  \end{smallmatrix}
  \bigr)$, where $u$ and $v$ are matrices. 
  We have to show that the characteristic polynomials
  $\Pc_a$, $\Pc_{uv}$, $\Pc_{vu}$ 
  are related by
  \begin{equation*}
    \label{eq:Pc}
    \Pc_a(X)=\Pc_{uv}(X^2)=\Pc_{vu}(X^2).
  \end{equation*}
  Since the coefficients of the characteristic polynomials are
  polynomial functions of the entries, it suffices to
  prove these equalities in the case where 
   $u$ and $v$ are generic matrices over
  $\mathbb{Z}$, for the general case 
   then follows by
  specialization. 
Since we have  $\Tr(a^{2k})=2\Tr\bigl((uv)^{k}\bigr)=2\Tr\bigl((vu)^{k}\bigr)$ and $\Tr(a^{2k+1})=0$  for any $k\in\nat$,
 we obtain the result by applying Newton's Identities relating the coefficients of the
  characteristic polynomial of a matrix to the traces of its powers.
\end{proof}

\begin{cor}
  \label{lem:PC2}
 Assume that $A$ is a central simple $F$-algebra. Let $K$ be an  \'etale quadratic $F$-subalgebra of $A$ such that $\dim_FC_A(K)=\frac12\dim_FA$.
  Then 
  $$\Prd_{A,a}(X)=\Prd_{C_{A}(K),a^2}(X^2)\in F[X]\mbox{ for any }a\in C'_A(K)\,.$$ 
\end{cor}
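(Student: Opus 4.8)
The plan is to reduce to Proposition~\ref{lem:PC}. First, $a^2\in C_A(K)$: for every $k\in K$ one has $a^2k=a\gamma(k)a=\gamma^2(k)a^2=ka^2$ because $\gamma^2=\id$. Next, $C_A(K)$ is a central simple algebra over its centre $K$ when $K$ is a field, and a product $e_1Ae_1\times e_2Ae_2$ of two central simple $F$-algebras when $K\simeq F\times F$; accordingly $\Prd_{C_A(K),a^2}$ is to be read as the reduced characteristic polynomial of $C_A(K)$ taken over its centre $K$, and the corollary in particular asserts that this polynomial has its coefficients in $F$. Since all the polynomials in sight are reduced characteristic polynomials, they are unchanged under extension of the base field; moreover $C_{A\otimes_FE}(K\otimes_FE)=C_A(K)\otimes_FE$, with $a\otimes1\in C'_{A\otimes_FE}(K\otimes_FE)$, for every field extension $E/F$. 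Choosing $E=F$ if $K$ is already split and $E=K$ if $K$ is a field, we obtain $K\otimes_FE\simeq E\times E$ with nontrivial $E$-automorphism the exchange of the two factors; replacing $F$, $A$, $K$, $a$ by $E$, $A\otimes_FE$, $K\otimes_FE$, $a\otimes1$ we may therefore assume from now on that $K\simeq F\times F$.

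So let $e_1,e_2$ be the primitive idempotents of $K$ and put $A_i=e_iAe_i$, so that $C_A(K)=A_1\oplus A_2$. The hypothesis $\dim_FC_A(K)=\frac12\dim_FA$ reads $[K:F]\cdot\dim_FC_A(K)=\dim_FA$, which by Proposition~\ref{prop:neat} (applied with $L=K$, which is legitimate since $Z(A)=F$ is separable over~$F$) is equivalent to all simple components of $C_A(K)$ having the same degree, i.e.\ to $\deg A_1=\deg A_2$. Hence Proposition~\ref{lem:PC} applies to $a\in C'_A(K)$: writing $u=e_1ae_2$ and $v=e_2ae_1$ we get $a=u+v$ and $a^2=uv+vu$ with $uv\in A_1$ and $vu\in A_2$, and
\[
\Prd_{A,a}(X)=\Prd_{A_1,uv}(X^2)=\Prd_{A_2,vu}(X^2);
\]
equating coefficients in the right-hand equality gives $\Prd_{A_1,uv}(X)=\Prd_{A_2,vu}(X)$.

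Finally, $C_A(K)=A_1\times A_2$ has centre $K\simeq F\times F$, and the reduced characteristic polynomial over $K$ of $a^2$, whose components are $uv\in A_1$ and $vu\in A_2$, is the pair $\bigl(\Prd_{A_1,uv}(X),\Prd_{A_2,vu}(X)\bigr)\in K[X]$; by the previous line its two entries coincide, so it lies in $F[X]$ and equals $\Prd_{A_1,uv}(X)$. Substituting $X^2$ yields $\Prd_{C_A(K),a^2}(X^2)=\Prd_{A_1,uv}(X^2)=\Prd_{A,a}(X)$, which is the assertion. The only point that requires care is the bookkeeping around $\Prd_{C_A(K),a^2}$: it must be taken over the centre $K$ rather than over $F$ (over $F$ the degrees of the two sides would not even match), its base-change compatibility $\Prd_{C_{A\otimes_FE}(K\otimes_FE),\,a^2\otimes1}=\Prd_{C_A(K),a^2}$ is what justifies the reduction in the first paragraph, and the coincidence of its two components --- which is precisely the content of Proposition~\ref{lem:PC} --- is what makes it descend to a polynomial over $F$ and completes the identification.
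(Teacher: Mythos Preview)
Your proof is correct and follows the same route as the paper: extend scalars so that $K$ becomes split, then invoke Proposition~\ref{lem:PC}. The paper's proof is a two-line sketch, whereas you spell out the details it omits---why $\deg A_1=\deg A_2$ follows from the dimension hypothesis via Proposition~\ref{prop:neat}, and why the a~priori $K[X]$-valued polynomial $\Prd_{C_A(K),a^2}$ actually lies in $F[X]$ (via the equality $\Prd_{A_1,uv}=\Prd_{A_2,vu}$ already contained in Proposition~\ref{lem:PC}).
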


\begin{proof}
To prove the equality we may extend scalars. Hence we may assume that $K\simeq F\times F$.
Then the equation follows from
  Proposition~\ref{lem:PC}. 
\end{proof}

\section{Capacity} 
\label{S:cap}

In this section we recall some basic facts and objects associated with involutions on central simple algebras.
We recall the distinction of involutions into two kinds 
and into three different types. 
We further introduce some notation that will allow us to study involutions of different types and over fields of arbitrary characteristic in a unified way.
Our main reference for involutions is \cite{BOI}.
\medbreak

Let $A$ be an $F$-algebra.
By an \emph{$F$-involution on $A$} we mean an $F$-linear anti-automorphism $\sigma:A\to A$ such that $\sigma\circ\sigma=\id_A$.
Given an $F$-involution on $A$ we set
\begin{eqnarray*}
\Symm(\sigma) & = & \{x\in A\mid \sigma(x)=\phantom{-}x\}\,,\\
\Skew(\sigma) & = & \{x\in A\mid \sigma(x)=-x\}\,,\\
\Symd(\sigma) & = & \{x+\sigma(x)\mid x\in A\}\,.
\end{eqnarray*}

By an \emph{$F$-algebra with involution} we mean a pair $(A,\s)$ of a finite-dimensional $F$-algebra $A$ and an $F$-involution $\s$ on $A$ with 
$F= Z(A)\cap \Symm(\s)$ and such that
$A$ has no non-trivial two-sided ideal $I$ with $\s(I)=I$.

In the sequel, let $(A,\s)$ denote an $F$-algebra with involution.
Then either $Z(A)=F$ or
$Z(A)$ is a quadratic \'etale extension of $F$ with non-trivial automorphism $\s|_{Z(A)}$.
One says that $(A,\s)$, or the involution $\s$, is of the \emph{first kind} or of the \emph{second kind}, respectively, according to whether $[Z(A):F]$ equals $1$ or $2$.

As long as $Z(A)$ is a field it follows that $A$ is central simple as a $Z(A)$-algebra.
However, if $(A,\s)$ is of the second kind, we may also have that $Z(A)\simeq F\times F$: in this case $(A,\s)\simeq (A_0\times A_0^\op,\sw)$ for a central simple $F$-algebra $A_0$ and  where
$\sw$ is the so-called \emph{switch-involution} given by
$\sw(a_1,a_2^\op)=(a_2,a_1^\op)$ (see \cite[(2.14)]{BOI}). 

If $\s$ is an involution of the first kind, then we say that $\s$ is \emph{symplectic} if $\dim_F\Symd(\s)<\dim_F\Skew(\s)$ and $1\in\Symd(\s)$, otherwise we say that $\s$ is \emph{orthogonal}.
Considering the $F$-linear map
$x\mapsto x+\s(x)$ one sees that $$\dim_F\Skew(\s)+\dim_F\Symd(\s)=\dim_FA,$$ hence
$\dim_F\Symd(\s)<\dim_F\Skew(\s)$ if and only if $\dim_F\Symd(\s)<\frac12\dim_FA$.

If $\s$ is of the second kind then we also say that $\s$ is \emph{unitary}.
We say that $(A,\s)$ is \emph{unitary of inner type} when $Z(A)\simeq F\times F$.
(The term is motivated by a corresponding notion for algebraic groups.)

The property of the involution $\s$ to be \emph{orthogonal}, \emph{symplectic} or \emph{unitary} is called its \emph{type}.
Notions for properties of an involution (such as its kind and its type) shall also be employed for the algebra with involution as a pair.

Whenever $Z(A)$ is a field we denote by $\deg A$, $\ind A$, $\coind A$ the degree, index or coindex of $A$, respectively, as a central simple $Z(A)$-algebra.
In the case where $Z(A)\simeq F\times F$, we define the same terms with reference to (any of) the two simple components of $A$.
We say that the algebra with involution $(A,\s)$ is \emph{split} if $\ind A=1$.

We have
$\Symd(\s)\subseteq \Symm(\s)$, 
and this is an equality unless $\car F = 2$ and $(A,\s)$ is of the first kind. (See
\cite[(2.17)]{BOI} for $\car F =2$ and $\sigma$  unitary.)
To avoid case distinctions in our statements and arguments, we set
\[
\Syms(\sigma)=
\begin{cases}
  \Symm(\sigma)&\text{if $\sigma$ is orthogonal or unitary,}\\
  \Symd(\sigma)&\text{if $\sigma$ is symplectic.}
\end{cases}
\]
Note that $\Syms(\s)=\Symd(\s)$ except when $\car F =2$ and $\s$ is orthogonal.

Dealing with orthogonal involutions in characteristic two requires additional care, as one may see in the following statement.

\begin{prop}\label{prop:keepstype}
Let $e$ be a nonzero idempotent in $\Symm(\s)$ and $\s_e=\s|_{eAe}$.
Then $(eAe,\s_e)$ is an $F$-algebra with involution of the same kind as $(A,\s)$.
Moreover, $(eAe,\s_e)$ is of the same type as $(A,\s)$ except when $\car F =2$, $\s$ is orthogonal and $e\in\Symd(\s)$, in which case $\s_e$ is symplectic.
\end{prop}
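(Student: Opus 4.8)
The plan is to verify the three claims in turn: first that $(eAe,\s_e)$ is an $F$-algebra with involution, then that it has the same kind as $(A,\s)$, and finally to pin down its type, isolating the exceptional behaviour in characteristic two.

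First I would check that $\s_e=\s|_{eAe}$ is well defined and is an $F$-involution on $eAe$: since $e\in\Symm(\s)$, for $x\in A$ we have $\s(exe)=\s(e)\s(x)\s(e)=e\s(x)e\in eAe$, so $eAe$ is $\s$-stable, and $\s_e$ is an $F$-linear anti-automorphism squaring to the identity on $eAe$. For the axioms of an $F$-algebra with involution, recall from Section~\ref{S:algebras} that if $A$ is central simple then $eAe$ is central simple and Brauer equivalent to $A$; more generally, using the description of $(A,\s)$ and the fact (recalled in the excerpt) that $Z(A)$ is either $F$ or a quadratic \'etale extension, one checks that $Z(eAe)=eZ(A)e$ is isomorphic to $Z(A)$ via $z\mapsto ez$, and that $eAe$ inherits from $A$ the property of having no nonzero proper $\s_e$-stable two-sided ideal (a $\s_e$-stable ideal of $eAe$ generates a $\s$-stable two-sided ideal of $A$, which must be $0$ or $A$). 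In particular $[Z(eAe):F]=[Z(A):F]$, which is exactly the statement that $(eAe,\s_e)$ has the same kind as $(A,\s)$; this also covers the unitary-of-inner-type case $Z(A)\simeq F\times F$, where $e$ decomposes accordingly.

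For the type, the only delicate case is the first kind (unitary involutions keep their type since kind is already settled). So assume $\s$ is of the first kind. If $\car F\neq 2$, then orthogonal/symplectic is detected by whether $1\in\Symd(\s)$ and more precisely by the dimension of $\Symd(\s_e)=\Symm(\s_e)$ inside $eAe$; since scalar extension to a splitting field of $A$ turns $(A,\s)$ into an algebra with an adjoint involution of a symmetric or alternating bilinear form, and $e$ corresponds to an orthogonal projection, $(eAe,\s_e)$ is adjoint to the restricted form, which has the same symmetry type. In characteristic two the subtlety is that symplectic means $\dim_F\Symd(\s)<\frac12\dim_FA$ together with $1\in\Symd(\s)$, and $e$ itself need not lie in $\Symd(\s)$. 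The key computation is: $e\in\Symd(\s_e)$ if and only if $e\in\Symd(\s)$, because $\Symd(\s_e)=e\Symd(\s)e$ (clear from $x+\s(x)\mapsto e(x+\s(x))e=(exe)+\s(exe)$ and conversely) and $e\cdot e\cdot e=e$. Hence if $\s$ is symplectic then $1_{eAe}=e\in\Symd(\s)$ so $e\in\Symd(\s_e)$, and a dimension count (using $\dim_F\Skew(\s_e)+\dim_F\Symd(\s_e)=\dim_F eAe$ from the excerpt, together with the strict inequality for $\s$ transported via the splitting field) shows $\s_e$ is again symplectic. If $\s$ is orthogonal and $\car F\neq2$ we are done as above. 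If $\s$ is orthogonal, $\car F=2$, and $e\notin\Symd(\s)$, then $e\notin\Symd(\s_e)$, so $1_{eAe}\notin\Symd(\s_e)$, forcing $\s_e$ orthogonal. The remaining case is $\s$ orthogonal, $\car F=2$, $e\in\Symd(\s)$: then $e\in\Symd(\s_e)$, so $1_{eAe}\in\Symd(\s_e)$, and it remains to see the strict inequality $\dim_F\Symd(\s_e)<\frac12\dim_F eAe$. This is where I would extend scalars to a splitting field and use the standard structure theory: $(A,\s)$ becomes adjoint to a symmetric bilinear form, and $e$ being a symmetrised element which is idempotent forces the restricted form on $e(A)e$ to be alternating (an idempotent in $\Symd$ of a symmetric form adjoint corresponds to the projection onto a totally isotropic-complemented piece), whence $\s_e$ is symplectic.

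The main obstacle I anticipate is the last case: showing that in characteristic two, an idempotent $e\in\Symd(\s)\cap\Symm(\s)$ for an orthogonal $\s$ forces $\s_e$ to be symplectic rather than merely confirming $1_{eAe}\in\Symd(\s_e)$. The cheap half ($1\in\Symd$) is immediate, but the strict dimension inequality $\dim_F\Symd(\s_e)<\frac12\dim_F eAe$ genuinely uses the interplay between idempotency and the bilinear-form picture over a splitting field; I would handle it by reduction to the split case, writing $e$ as the matrix of an orthogonal projection with respect to a symmetric form, and observing that $e$ lying in $\Symd$ means its diagonal "vanishes" in the appropriate sense, which is precisely the condition making the adjoint of the restricted form alternating. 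Everything else is either formal (the ideal and centre bookkeeping) or a routine transfer along a splitting field.
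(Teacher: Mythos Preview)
Your proposal is essentially correct, and the handling of the centre, the $\s_e$-stable ideal condition, and the kind is fine (and in fact close to what the paper does). For the type, however, you take a genuinely different route from the paper.

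The paper does not pass to a splitting field for the type analysis. Instead it sets $f=1-e$ and uses the Peirce decomposition $A=eAe\oplus(eAf\oplus fAe)\oplus fAf$, which is $\s$-stable. It shows $eAe\cap\Symd(\s)=\Symd(\s_e)$ (your identity $\Symd(\s_e)=e\Symd(\s)e$ restated) and computes
\[
\dim_F\Symd(\s)=\tfrac12 r(r+\varepsilon_e)+rs+\tfrac12 s(s+\varepsilon_f)=\tfrac12(n^2+r\varepsilon_e+s\varepsilon_f),
\]
where $n=\deg A$, $r=\deg eAe$, $s=\deg fAf$ and $\varepsilon_e,\varepsilon_f\in\{\pm1\}$. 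Since also $\dim_F\Symd(\s)=\tfrac12 n(n+\varepsilon)$, one gets $\varepsilon=\varepsilon_e=\varepsilon_f$: the strict dimension inequality holds for $\s_e$ if and only if it holds for $\s$. Together with the easy implication ``$1\in\Symd(\s)$ or $e\in\Symd(\s)$ $\Rightarrow$ $e\in\Symd(\s_e)$'', this settles all cases uniformly, including the exceptional one.

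Your approach instead goes through the adjoint-form dictionary over a splitting field: restriction of an alternating form is alternating (symplectic $\to$ symplectic), and in characteristic two $e\in\Symd(\s)$ for an orthogonal $\s$ says precisely that the associated form $b(\,\cdot\,,e(\,\cdot\,))$ is alternating, hence $b|_{eV}$ is alternating and $\s_e$ is symplectic. This is valid and arguably more geometric. What the paper's Peirce argument buys you is a single computation that covers all first-kind cases at once, without invoking the form interpretation, and it simultaneously shows that $\s|_{fAf}$ has the same type as $\s_e$. Your route is a little redundant in the symplectic case (you verify $e\in\Symd(\s_e)$ separately and then invoke a splitting-field argument that already yields this), but there is no logical gap.
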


\begin{proof}
Obviously the $F$-algebra $eAe$ is stable under the $F$-involution $\s_e$.
Let $K=Z(A)$.
We first show that $Z(eAe)=Ke$.
For this we may assume that $A$ is split and identify $A$ with $\End_K(V)$ for a finitely generated
free $K$-module $V$.
Then $eAe$ is identified with $\End_K(eV)$.
Moreover, the $K$-submodule $eV$ of $V$ is free; if $K$ is a field this is trivial, and otherwise we have $(A,\s)\simeq (A_0\times A_0^\op,\sw)$ for a central simple $F$-algebra $A_0$ and 
use that $e\in\Symm(\s)$ to obtain this conclusion.
Hence the centres of $\End_K(V)$ and $\End_K(eV)$ consist of the
scaling maps with scalars from $K$. 
This naturally identifies $Z(eAe)$ with $Ke$.

Hence $(eAe,\s_e)$ is an $F$-algebra with involution
of the same kind as $(A,\s)$.
To compare the types of the involutions, we only need to consider the case where $(A,\s)$ is of the first kind.

  If $a\in A$ is such that $a+\sigma(a)$ equals $1$ or $e$, then $eae+\sigma(eae)=e$. 
Hence, if $1\in \Symd(\s)$ or $e\in \Symd(\s)$, then $e\in\Symd(\sigma_e)$. It remains to consider the dimensions of $\Symd(\s)$ and $\Symd(\s_e)$.

  Let $f=1-e$, and let $n=\deg A$,
  $r=\deg eAe$, and $s=\deg fAf$, so that $n=r+s$. The decomposition
  $$A=eAe\oplus(eAf\oplus fAe)\oplus fAf$$ is stable under $\sigma$,
  hence
  \begin{equation*}
    \Symd(\sigma) = \bigl(eAe\cap\Symd(\sigma)\bigr) \oplus
    \bigl((eAf\oplus fAe)\cap\Symd(\sigma)\bigr) \oplus
    \bigl(fAf\cap\Symd(\sigma)\bigr).
  \end{equation*}
  If $a\in A$ satisfies $a+\sigma(a)\in eAe$, then
  $a+\sigma(a)=eae+\sigma(eae)$.
  This shows that 
  \[
  eAe\cap\Symd(\sigma) = \Symd(\sigma_e),
  \]
  and it follows that
  $\dim_F\bigl(eAe\cap\Symd(\sigma)\bigr)=\frac12r(r+\varepsilon_e)$
  with $\varepsilon_e=\pm1$. Likewise,
  $\dim_F\bigl(fAf\cap\Symd(\sigma)\bigr)=\frac12s(s+\varepsilon_f)$ with
  $\varepsilon_f=\pm1$. Now, if $a\in A$ is such that $a+\sigma(a)=ebf+fce$
  for some $b$, $c\in A$, then 
  \[
  ebf+fce=\sigma(ebf+fce)=e\sigma(c)f+f\sigma(b)e,
  \]
  hence $fce=f\sigma(b)e=\sigma(ebf)$ and $a+\sigma(a)=ebf+\sigma(ebf)$. Therefore
  \[
  (eAf\oplus fAe)\cap\Symd(\sigma) = \{x+\sigma(x)\mid x\in eAf\},
  \]
  and it follows that $\dim_F (eAf\oplus fAe)\cap\Symd(\sigma)=\dim_F
  eAf=rs$. Therefore the above decomposition of $\Symd(\s)$ 
   yields
\[
  \dim_F\Symd(\sigma)=\textstyle{\frac12}r(r+\varepsilon_e) +rs +
    \textstyle{\frac12}s(s+\varepsilon_f) = \textstyle{\frac12}(n^2+r\varepsilon_e+s\varepsilon_f). 
\]
As $\dim_F\Symd(\sigma)=\frac12n(n+\varepsilon)$ for $\varepsilon=\pm 1$, we conclude that $\varepsilon=\varepsilon_e=\varepsilon_f$.
Hence, $\dim_F\Symd(\s)<\frac12 \dim_FA$ if and only if $\dim_F\Symd(\s_e)<\frac12 \dim_FeAe$.
 \end{proof}

We give an example for the exceptional case in the statement of Proposition~\ref{prop:keepstype}.

\begin{ex}
  \label{ex:orthrestricttosymp}
  Write $t$ for the transpose involution on
  $\matr{4}(F)$.
 Consider the  matrices
  \[
  m=
  \begin{pmatrix}
   1&0&0&0\\
   0&1&0&0\\
   0&0&0&1\\
   0&0&1&0
  \end{pmatrix}\,\,\mbox{ and }\,\, e= \begin{pmatrix}
   0&0&0&0\\
   0&0&0&0\\
   0&0&1&0\\
   0&0&0&1
  \end{pmatrix}
  \]
  in $\matr{4}(F)$.
  The involution $\sigma=\Int(m)\circ t$ is orthogonal because $m$ is
  not alternating.
It is further easy to see that $e\in\Symd(\s)$. 
Hence, if $\car(F)=2$, 
we obtain that $\s$ restricts to a symplectic involution on $e\matr{4}(F)e$.
\end{ex}

We define
\[
\kap(A,\sigma)=\left\{\begin{array}{rl}
\deg A & \mbox{if $\sigma$ is orthogonal or unitary,}\\
\frac{1}{2}\deg A & \mbox{if $\sigma$ is symplectic,}
\end{array}\right.
\]
and we call this integer the \emph{capacity of $(A,\sigma)$}. 
This terminology is inspired by the theory of Jordan algebras: when the
characteristic is different from~$2$ and the algebra $A$ is split, then
$\Symm(\sigma)$ is a Jordan algebra of capacity equal to $\kap(A,\sigma)$; see~\cite[\S~I.5.1]{McC}.
Note that with this definition there exist $F$-algebras with
involution of any given type and any positive integer as capacity.

\begin{prop}\label{P:inv-sym-restrict-type}
Let $L$ be an $F$-subalgebra of $A$ contained in $\Symm(\s)$ and such that $L/F$ is a separable field extension. Set $C=C_A(L)$ and $\s_C=\s|_C$.
Then $(C,\s_C)$ is an $L$-algebra with involution of the same type as $(A,\s)$ and such that 
$$\kap(A,\s)=[L:F]\cdot\kap(C,\s_C)\,.$$
\end{prop}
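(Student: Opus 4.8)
The plan is to first establish the purely ring-theoretic part — that $C = C_A(L)$ is a simple $L$-algebra with centre containing $L$ — and then deal separately with the preservation of the kind, the preservation of the type, and finally the capacity identity. Since $L$ is a field contained in $\Symm(\s)$, it is in particular $F$-linearly disjoint from $Z(A)$ (the latter being $F$ or a quadratic extension, while $L \subseteq \Symm(\s)$ meets $Z(A)$ only in $F$). So Proposition~\ref{prop:neat} applies and gives that $C$ is semisimple with centre $L$; in fact, because $L$ is a field, the argument there (via the Double Centraliser Theorem) shows $C$ is simple with centre $L$, and $[L:F]\cdot\dim_F C = \dim_F A$. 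That $\s$ restricts to $C$ is immediate: if $x \in C$ and $\ell \in L$, then $\s(x)\ell = \s(\ell)\s(x)^{-1}$... more directly, $\s(x\ell) = \s(\ell)\s(x) = \ell\,\s(x)$ using $\s|_L = \id_L$ since $L \subseteq \Symm(\s)$, so $\s(x)$ again centralises $L$. Thus $\s_C$ is an $L$-involution on $C$, and $(C,\s_C)$ is an $L$-algebra with involution.

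Next I would check the kind. The kind of $(A,\s)$ is governed by $Z(A)$: first kind if $Z(A) = F$, second kind if $[Z(A):F]=2$ with $\s$ acting nontrivially. Since $Z(C) = L \supseteq Z(A)\cdot L$, we have $Z(C) \cap \Symm(\s_C)$ — but actually the point is $Z(C) = L$, and $\s_C$ fixes $L$ pointwise (again since $L \subseteq \Symm(\s)$), so $(C,\s_C)$ has $L = Z(C)\cap\Symm(\s_C)$; whether it is of the first or second kind over $L$ is decided by whether $Z(A)\otimes_F L \to Z(C)$-related data gives $[Z(C):L']=1$ or $2$ where $L'$ is... hmm — cleaner: $Z(C) = L$ when $\s$ is of the first kind and $Z(C) = Z(A)\cdot L$ of dimension $2$ over $L$ when $\s$ is of the second kind, on which $\s_C$ restricts to $\s|_{Z(A)}$ extended by the identity on $L$, which is nontrivial. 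Either way $(C,\s_C)$ has the same kind as $(A,\s)$.

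For the type and the capacity, I would reduce to the split case, which is legitimate since the type and the capacity of both $(A,\s)$ and $(C,\s_C)$ are unchanged under scalar extension to a splitting field of $A$ (the type is determined by $\dim\Symd$, which is a polynomial condition stable under base change, and for unitary involutions one extends so that $Z(A)$ splits). So assume $A = \End_F(V)$ and $L$ acts on $V$, making $V$ a free $L$-module (freeness holding by Proposition~\ref{prop:neat}, condition $(c)$, which is automatic since $L$ is a field); then $C = \End_L(V)$. Now $\s$ is the adjoint involution of a (Hermitian or bilinear) form $h$ on $V$ over $F$, and one uses a transfer/corestriction construction: a suitable $F$-linear form $s\colon L \to F$ composed with $h$ produces an $L$-(Hermitian or bilinear) form $h'$ on $V$ whose adjoint involution is exactly $\s_C$, with the same symmetry type; comparing $\deg A = \dim_F V$ with $\deg C = \dim_L V = \dim_F V / [L:F]$ and tracking the factor $\tfrac12$ in the symplectic case then yields $\kap(A,\s) = [L:F]\cdot\kap(C,\s_C)$.

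The main obstacle I expect is the bookkeeping in characteristic~$2$ for orthogonal involutions, where $\Syms = \Symd \subsetneq \Symm$ and where — as Proposition~\ref{prop:keepstype} and Example~\ref{ex:orthrestricttosymp} warn — restriction of an orthogonal involution can turn symplectic. Here one must verify that this pathology does \emph{not} occur for the centraliser of a separable field $L \subseteq \Symm(\s)$: the dimension count $\dim_F\Symd(\s_C)$ must come out to $\tfrac12 \dim_L C \cdot ([L:F] + \varepsilon)$ with the \emph{same} sign $\varepsilon$ as for $(A,\s)$. I would handle this via the same transfer-of-forms argument — an orthogonal $h$ on $V$ over $F$ transfers to a symmetric bilinear (not alternating) form $h'$ on $V$ over $L$, provided one chooses the trace form $s\colon L\to F$ to be nonzero, which is possible precisely because $L/F$ is separable; separability is exactly what rules out the degeneracy that would flip the type. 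Equivalently, one can argue directly that the $L$-form $h'$ is alternating iff $h$ is, using that $s$ is symmetric and not identically zero on the diagonal. This is where the separability hypothesis on $L$ (as opposed to merely étale) is essential.
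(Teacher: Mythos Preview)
Your structural setup agrees with the paper's: stability of $C$ under $\s$, then the Double Centraliser Theorem to obtain $Z(C)=Z(A)\cdot L$ and $\deg A=[L:F]\cdot\deg C$, hence the same kind and the capacity identity once the type is known to be preserved. The paper, however, does not argue type preservation directly: it simply invokes \cite[(4.12)]{BOI} for the first-kind case, making the whole proof a few lines. Your proposal instead attempts to reprove that reference inline, which is where the difficulties lie.

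Two points in your direct argument need repair. First, the transfer of forms is stated backwards: composing $h\colon V\times V\to F$ with an $F$-linear map $s\colon L\to F$ cannot yield an $L$-valued form. What one actually does is use nondegeneracy of the trace pairing on $L$ (this is exactly where separability of $L/F$ enters) to \emph{lift} the $L$-balanced $F$-form $h$ to the unique $L$-form $h'$ with $\mathrm{tr}_{L/F}\circ h'=h$; then $\s_C$ is adjoint to $h'$. With this corrected, ``$h'$ alternating $\Rightarrow$ $h$ alternating'' is immediate, so the orthogonal case you flag as the obstacle is in fact the easy direction; it is the symplectic-to-symplectic direction that needs a further check. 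Second, your reduction to the split case is delicate: over an algebraic closure $L$ ceases to be a field and the identification $C=\End_L(V)$ collapses. You can salvage this either by splitting over the function field of the Severi--Brauer variety (a regular extension, so $L$ stays a field), or---more in the spirit of the paper---by passing to $\overline F$ anyway, applying Proposition~\ref{prop:keepstype} to the primitive idempotents $e_1,\ldots,e_r$ of $L_{\overline F}$, and noting that Galois transitivity on these idempotents forces either all or none of them into $\Symd(\s_{\overline F})$; since their sum is $1\notin\Symd$ in the orthogonal case, none are, and the type is preserved on each block.
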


\begin{proof}
With $\s(L)=L$ we also have that $\s(C)=C$.
Let $K=Z(A)$. Then $[LK:L]=[K:F]$ and $[LK:K]=[L:F]$.
The Double Centraliser Theorem yields that $C_A(C)=KL$
and $\deg A=[L:F]\cdot \deg C$.
As obviously $KL\subseteq Z(C)\subseteq C_A(C)$
we conclude that $Z(C)=KL$.
Hence, $(C,\s_C)$ is an $L$-algebra with involution,
and $\s_C$ is unitary if and only if $\s$ is unitary.
Using \cite[(4.12)]{BOI} in the cases where $\s$ is of the first kind, we conclude that 
the $L$-algebra with involution $(C,\s_C)$ has the same type as the $F$-algebra with involution $(A,\s)$.
Since $\deg A=[L:F]\cdot \deg C$, this implies the claimed equality for the capacity.
\end{proof}

We are going to show in Theorem~\ref{thm:capmaxdim} that the capacity of $(A,\s)$ is equal to the maximal degree $[L:F]$ where $L$ is an \'etale $F$-algebra contained in $\Syms(\s)$. 
To this end we first consider the case where $A$ is split and show that we even find then a split \'etale subalgebra  in $\Syms(\s)$ of degree equal to $\kap(A,\s)$.

\begin{prop}
  \label{lem:capmaxdimsplit}
Let $d=\kap(A,\s)$ and assume that $(A,\s)$ is split.
Then $\Syms(\sigma)$ contains an $F$-subalgebra $L$ of $A$ with $L\simeq F^d$.
\end{prop}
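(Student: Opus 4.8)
The plan is to reduce everything to the standard split models of $(A,\s)$ and exhibit an explicit diagonal subalgebra. Since $(A,\s)$ is split, up to $F$-isomorphism it is one of the following: for $\s$ orthogonal, $(\matr{n}(F),t_b)$ where $t_b$ is the adjoint involution $x\mapsto b^{-1}x^tb$ of a nonalternating symmetric bilinear form $b$; for $\s$ symplectic, $(\matr{n}(F),t_b)$ with $b$ alternating and $n=2m$ even; and for $\s$ unitary, either $Z(A)$ a field and $(\matr{n}(K),\tau)$ the adjoint of a hermitian form, or the inner type $(A_0\times A_0^\op,\sw)$ with $A_0$ split, i.e. $(\matr{n}(F)\times\matr{n}(F)^\op,\sw)$. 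In each case I would diagonalise the form over $F$ as far as possible: for orthogonal $\s$ in $\car F\neq 2$, or unitary $\s$, one may take $b=\id$ and $\s$ becomes the transpose (resp. conjugate transpose); for symplectic $\s$ one may take $b$ the standard alternating form, i.e. a block-diagonal matrix with blocks $\left(\begin{smallmatrix}0&1\\-1&0\end{smallmatrix}\right)$; for orthogonal $\s$ in $\car F=2$ one may take $b$ with at most one hyperbolic plane in the alternating part, and after a change of basis assume $b=\diag(1,\dots,1)$ plus one block $\left(\begin{smallmatrix}0&1\\1&0\end{smallmatrix}\right)$ if $n$ is forced to use it — but in fact when $\car F=2$ a nonalternating symmetric bilinear space is an orthogonal sum of $\la 1\ra$'s and at most one metabolic plane, so one can arrange $b=\id$ whenever $n$ is odd and $b=\id\perp$ (a plane) only when necessary; I will treat the diagonal case and handle the metabolic summand separately.

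In the diagonalisable cases the construction is immediate: the subalgebra $L$ of diagonal matrices $\diag(a_1,\dots,a_n)$ with $a_i\in F$ is isomorphic to $F^n$, it is fixed pointwise by the transpose and by the conjugate transpose (the $a_i$ lie in $F$), hence $L\subseteq\Symm(\s)$, and since $d=\deg A=n$ in the orthogonal and unitary cases this is a subalgebra $L\simeq F^d$; moreover diagonal matrices are symmetrized (each $E_{ii}=E_{ii}+\s(E_{ii})$ when $\car F\neq 2$, and in the unitary case $\Syms=\Symd$ still contains the idempotents $E_{ii}$), so $L\subseteq\Syms(\s)$. For the inner unitary type $(\matr{n}(F)\times\matr{n}(F)^\op,\sw)$, note $\Symm(\sw)=\{(x,x^\op)\mid x\in\matr{n}(F)\}\cong\matr{n}(F)$ with the switch acting trivially on it, so I take $L=\{(\diag(a),\diag(a)^\op)\}\cong F^n=F^d$, and each idempotent $(E_{ii},E_{ii}^\op)$ is visibly in $\Symd(\sw)$. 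For symplectic $\s$ with the standard alternating $b$, the adjoint involution $\s$ sends a block matrix to its ``symplectic transpose''; the $m$ idempotents $f_j$ projecting onto the $j$-th hyperbolic plane satisfy $\s(f_j)=f_j$, they are orthogonal, they sum to $1$, and each $f_j\in\Symd(\s)$ because over a split symplectic algebra every symmetric idempotent is symmetrized (indeed $f_j=g+\s(g)$ for a suitable rank-one $g$ inside the $j$-th plane). Thus $L=Ff_1\oplus\dots\oplus Ff_m\cong F^m=F^{\frac12\deg A}=F^d$ lies in $\Symd(\s)=\Syms(\s)$.

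The one genuinely delicate point — and the step I expect to be the main obstacle — is orthogonal $\s$ in characteristic two when the diagonalisation of $b$ forces a metabolic plane, i.e. $b\cong\la1\ra^{n-2}\perp\mathbb H$ with $\mathbb H=\left(\begin{smallmatrix}0&1\\1&0\end{smallmatrix}\right)$ and one must check $\Syms=\Symm$ (not $\Symd$) has room for a full rank-$n$ split étale subalgebra. On the $\mathbb H$-block the adjoint involution is $\left(\begin{smallmatrix}a&b\\c&d\end{smallmatrix}\right)\mapsto\left(\begin{smallmatrix}d&b\\c&a\end{smallmatrix}\right)$, whose symmetric elements are $\left(\begin{smallmatrix}a&b\\c&a\end{smallmatrix}\right)$; this $4$-dimensional algebra of symmetric elements contains the étale subalgebra $F[\left(\begin{smallmatrix}0&1\\1&0\end{smallmatrix}\right)]\cong F\times F$ (its idempotents are $\left(\begin{smallmatrix}1&1\\1&1\end{smallmatrix}\right),\left(\begin{smallmatrix}0&1\\1&0\end{smallmatrix}\right)+$ wait, rather $e=\left(\begin{smallmatrix}1&0\\0&1\end{smallmatrix}\right)+\dots$) — concretely $p=\left(\begin{smallmatrix}1&1\\0&0\end{smallmatrix}\right)$ is \emph{not} symmetric, so instead I use that in $\car F=2$ the element $h=\left(\begin{smallmatrix}0&1\\1&0\end{smallmatrix}\right)$ is symmetric with $h^2=1$, separable iff $\la1,1\ra$ is a field, which fails over any field; so one must pass to the subalgebra spanned by $\id$ and $h$ and note $h$ has minimal polynomial $X^2-1=(X-1)^2$, hence this is \emph{not} étale. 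The correct fix is to observe that in this forced-metabolic case one has instead $n$ even and can absorb the plane: replace the description of $b$ by $\la1\ra^{n-1}\perp\la a\ra$ whenever some nonsquare $a\in\mg F$ exists, or — the clean route — invoke that a nonalternating symmetric bilinear form over any field of characteristic $2$ with $\dim\ge1$ has a nonzero vector $v$ with $b(v,v)\ne0$, split off $\la b(v,v)\ra$, and induct, so that in fact $b$ is \emph{always} diagonalisable when nonalternating; this reduces the orthogonal $\car F=2$ case to the diagonal case already handled, with $E_{ii}\in\Symm(\s)$ giving $L=\bigoplus FE_{ii}\cong F^n=F^d\subseteq\Symm(\s)=\Syms(\s)$. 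I would therefore state as the first step the lemma that a nonalternating symmetric bilinear form over any field is diagonalisable, and let the rest of the proof be the routine case-by-case exhibition above.
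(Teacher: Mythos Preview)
Your approach coincides with the paper's: pass to the adjoint involution of a form on a vector space, take an orthogonal decomposition into $d$ subspaces of dimension~$1$ (orthogonal or unitary) or~$2$ (symplectic), and let $L$ be spanned by the corresponding projection idempotents $e_1,\dots,e_d$. The paper streamlines your case-by-case verification that $e_i\in\Syms(\sigma)$ by a single appeal to Proposition~\ref{prop:keepstype}: once $e_i\in\Symm(\sigma)$ (immediate from orthogonality of the decomposition), that proposition shows $(e_iAe_i,\sigma\rvert_{e_iAe_i})$ has the same type as $(A,\sigma)$, which in the symplectic case forces $e_i\in\Symd(\sigma)$ without exhibiting an explicit $g$ with $g+\sigma(g)=e_i$.

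One small gap: your induction for the diagonalisability of a nonalternating symmetric bilinear form in characteristic~$2$ is incomplete, because after splitting off $\la b(v,v)\ra$ the restriction of $b$ to $(Fv)^\perp$ may be alternating (take $\la1\ra\perp\bigl(\begin{smallmatrix}0&1\\1&0\end{smallmatrix}\bigr)$ and split off the first basis vector), so the induction hypothesis does not apply directly. The result is nonetheless true and standard---one replaces $v$ by $v+w$ for a suitable $w$ in the complement to force the restriction to be nonalternating---so this is a wrinkle rather than a real obstacle; the paper simply asserts the orthogonal decomposition without comment.
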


\begin{proof}
Assume first that $Z(A)$ is not a field. Then $(A,\sigma)$ can be identified with $(\matr{d}(F)\times \matr{d}(F)^\op,\sw)$. Letting $L_0\subseteq
  \matr{d}(F)$ be the algebra of diagonal matrices and $L=\{(x,x)\mid
  x\in L_0\}$, we obtain that $L\subseteq \Syms(\sigma)$ and $L\simeq F^d$. 
  
Assume now that $K=Z(A)$ is a field.
We identify $A$ with $\End_{K}V$ for some
  $K$-vector space $V$. Then $\sigma$ is the
  adjoint involution of some nondegenerate $F$-bilinear form $b:V\times V\to K$, which is symmetric and non-alternating if $\sigma$ is  orthogonal, which is alternating if
  $\sigma$ is symplectic, and which is hermitian with respect to the nontrivial $F$-automorphism of $K$ if $\sigma$ is unitary. If $\sigma$ is orthogonal or unitary (resp.\
  symplectic), we  have $\dim_KV=d$ (resp. $\dim_KV=2d$), and we obtain a decomposition of $V$ into a direct sum of $1$-dimensional (resp.\ $2$-dimensional) $K$-subspaces
  \[
  V=V_1\oplus\ldots\oplus V_d
  \]
that is an orthogonal decomposition for $b$.
Let $e_1,\dots,e_d$ denote the orthogonal projections corresponding to this decomposition.
Then $e_1,\dots,e_d$ are idempotents in $A=\End_KV$.
For $i=1,\dots,d$ and $x,y\in V$ we have
\[
  b\bigl(x,e_i(y)\bigr) = b\bigl(e_i(x),e_i(y)\bigr) = b(e_i(x),y).
  \]
Thus $e_1,\dots,e_d\in \Symm(\s)$, and we conclude by Proposition~\ref{prop:keepstype} that $e_1,\dots,e_d\in\Syms(\s)$.
Hence $L=Fe_1\oplus\dots\oplus Fe_d$ is an $F$-subalgebra of $A$ contained in $\Syms(\sigma)$ with $L\simeq F^d$.
\end{proof}

\section{Forms on the space of symmetrized elements} 

Certain statements on the existence for elements or subalgebras with special properties in an algebra with involution can be proven by reducing to the situation where the base field is algebraically closed.
This requires a geometric description of the property in question.
Here we are interested in elements and subalgebras contained in $\Syms(\s)$.
To obtain a geometric formulation we introduce a polynomial $\chi_a\in F[X]$ associated to an arbitrary element $a\in \Syms(\s)$, whose degree is equal to $\kap(A,\s)$ and which has $a$ as a root.
It is defined as either the
reduced characteristic polynomial $\Prd_{A,a}$ or the Pfaffian
characteristic polynomial $\Prp_{\sigma,a}$ (see
\cite[(2.10)]{BOI}):
\begin{equation*}
  \chi_{a}=
  \begin{cases}
    \Prd_{A,a}&\text{if $\sigma$ is orthogonal or unitary,}\\
    \Prp_{\sigma,a}&\text{if $\sigma$ is symplectic.}
  \end{cases}
\end{equation*}
For the unitary case, note that, even though the coefficients of the
reduced characteristic polynomial of any $a\in A$ lie in $Z(A)$, when
$\sigma(a)=a$ the coefficients of $\Prd_{A,a}$ lie in $F$ (see
\cite[(2.16)]{BOI}). (When $(A,\sigma)=(A_0\times A_0^\op,\sw)$, then
$a=(a_0,a_0^\op)$ for some $a_0\in A_0$, and $\chi_a=\Prd_{A_0,a_0}$.)
Thus we have $\chi_a\in F[X]$ in all cases. Note that 
$\chi_a$ is a multiple of the minimal polynomial of $a$ over $F$ and
that the two polynomials have the same irreducible factors. Therefore,
if $\chi_a$ is separable then $\chi_a$ is the minimal polynomial of
$a$ over $F$.

\begin{thm}
  \label{thm:capmaxdim}
Any \'etale $F$-subalgebra of $A$ contained in $\Symm(\sigma)$ is contained in $\Syms(\sigma)$.
Furthermore  \[
  \kap(A,\sigma) = \max\{ [L:F] \mid \text{$L$ \'etale $F$-algebra with
    $L\subseteq \Symm(\sigma)$}\}.
  \]
\end{thm}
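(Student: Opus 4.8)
The plan is to prove the theorem in two stages: first the inclusion statement, and then the dimension equality.

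For the first assertion, let $L \subseteq \Symm(\sigma)$ be an \'etale $F$-subalgebra. The only case where $\Syms(\sigma) \neq \Symm(\sigma)$ is when $\car F = 2$ and $\sigma$ is orthogonal, so assume we are in that situation and $\Syms(\sigma) = \Symd(\sigma)$. I would argue that $L \subseteq \Symd(\sigma)$ by reducing to a single element: since $L$ is \'etale, it is generated by finitely many elements, and in fact after enlarging the base field (which is harmless, since $\Symd(\sigma) \otimes_F F' = \Symd(\sigma_{F'})$ and \'etale algebras stay \'etale) one may assume $L$ is split, hence by Lemma~\ref{L:split-etale-infinite} generated by a single separable element $a$; alternatively one works directly with the idempotent decomposition of $L$. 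The key point is that for $a \in \Symm(\sigma)$ separable over $F$, one has $F[a] \subseteq \Symd(\sigma)$: this should follow because $\Prd_{A,a}$ (or rather $\chi_a$) is separable, so $a$ satisfies its reduced characteristic polynomial, and combining $a + \sigma(a) = 2a = 0$ is \emph{not} available in characteristic two --- instead the relevant fact is that a separable commutative subalgebra of a central simple algebra in characteristic two consisting of symmetric elements automatically lands in $\Symd(\sigma)$ for an orthogonal involution, which I expect follows from a dimension count on $C_A(L)$ via Proposition~\ref{P:inv-sym-restrict-type} together with the structure of $\Symd$ of the centralizer, or from the fact that $L$ being \'etale forces each primitive idempotent $e_i$ of $L$ to lie in $\Symd(\sigma)$ by Proposition~\ref{prop:keepstype}-type reasoning applied to the restriction $\sigma|_{C_A(L)}$. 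This --- pinning down why separability forces membership in $\Symd$ --- is where I expect the main subtlety to lie.

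For the inequality $\kap(A,\sigma) \geq [L:F]$ for every \'etale $L \subseteq \Symm(\sigma)$: by the first part $L \subseteq \Syms(\sigma)$. Decompose $L = L_1 \times \dots \times L_r$ into separable field extensions via primitive idempotents $e_i \in \Syms(\sigma)$. Each $e_i$ lies in $\Symm(\sigma)$, so by Proposition~\ref{prop:keepstype} the restriction $\sigma_i = \sigma|_{e_i A e_i}$ is an involution on $e_i A e_i$, and $L_i \subseteq \Symm(\sigma_i)$ is a separable field extension of $F$ inside $Z(e_iAe_i)\cdot$-fixed part; by Proposition~\ref{P:inv-sym-restrict-type} applied to $L_i$ inside $(e_iAe_i,\sigma_i)$ we get $\kap(e_iAe_i,\sigma_i) = [L_i:F] \cdot \kap(C_{e_iAe_i}(L_i), \sigma')$, hence $[L_i:F] \leq \kap(e_iAe_i,\sigma_i)$. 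It then remains to check $\sum_i \kap(e_iAe_i,\sigma_i) \leq \kap(A,\sigma)$; since $\deg e_iAe_i$ sum to $\deg A$ (in the orthogonal/unitary case) and the symplectic bookkeeping for the half-degree is controlled by Proposition~\ref{prop:keepstype} (which tracks how the type, hence the factor $\frac12$, behaves under $e \mapsto eAe$), this additivity gives the bound.

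For the reverse inequality $\kap(A,\sigma) \leq \max [L:F]$: one must exhibit an \'etale $L \subseteq \Symm(\sigma)$ with $[L:F] = \kap(A,\sigma)$. I would reduce to the split case: pass to a splitting field $F'$ of $A$ of odd degree if possible, but more robustly use a generic/scalar-extension argument. Over $F'$ we have, by Proposition~\ref{lem:capmaxdimsplit}, a subalgebra $L' \simeq (F')^d$ with $d = \kap(A_{F'},\sigma_{F'}) = \kap(A,\sigma)$ contained in $\Syms(\sigma_{F'}) \subseteq \Symm(\sigma_{F'})$. To descend, I would use the geometric description via the polynomial $\chi_a$: the condition that an element $a \in \Symm(\sigma)$ generates an \'etale subalgebra of dimension $d$ is that $\chi_a$ has degree $d$ (automatic) and is separable, which is a Zariski-open condition on the affine space $\Symm(\sigma)$ over $F$; since it is satisfied after scalar extension to $F'$ (take $a'$ a generator of $L'$, available when $F'$ is infinite, e.g. after a further transcendental extension, which does not change the problem), the open set is nonempty, and a standard argument produces an $F$-rational point when $F$ is infinite, with the finite-field case handled separately by a direct construction or by the fact that the relevant open subvariety of affine space over a finite field has a rational point once the ambient dimension is positive and the complement is a proper subvariety. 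Taking such an $a \in \Symm(\sigma)$, the subalgebra $F[a]$ is \'etale of dimension $d = \kap(A,\sigma)$, completing the proof.
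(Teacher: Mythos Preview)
Your first paragraph contains a genuine error: you have the case distinction backwards. By definition, $\Syms(\sigma)=\Symm(\sigma)$ when $\sigma$ is orthogonal or unitary, and $\Syms(\sigma)=\Symd(\sigma)$ when $\sigma$ is symplectic. So the only case where $\Syms(\sigma)\neq\Symm(\sigma)$ is $\sigma$ \emph{symplectic} with $\car F=2$, not orthogonal. Consequently your attempt to show that an \'etale $L\subseteq\Symm(\sigma)$ lies in $\Symd(\sigma)$ for an orthogonal involution in characteristic~$2$ is both unnecessary and, in fact, false: in Example~\ref{ex:notneat} the algebra $L=Fe_1\oplus Fe_2\oplus Fe$ is \'etale and contained in $\Symm(\sigma)$, yet $e_1\notin\Symd(\sigma)$ (since $\sigma\rvert_{e_1Ae_1}$ is orthogonal), so $L\not\subseteq\Symd(\sigma)$.

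The paper handles the first assertion and the bound $[L:F]\leq\kap(A,\sigma)$ in one stroke. Extend scalars to an algebraic closure so that $L$ is split with primitive idempotents $e_1,\ldots,e_r$. In the orthogonal and unitary cases there is nothing to show for the inclusion, and $r\leq\sum_i\deg(e_iAe_i)=\deg A=\kap(A,\sigma)$. In the symplectic case, Proposition~\ref{prop:keepstype} forces each $\sigma\rvert_{e_iAe_i}$ to be symplectic; hence $e_i\in\Symd(\sigma\rvert_{e_iAe_i})\subseteq\Symd(\sigma)$ (the identity of a symplectic algebra with involution lies in $\Symd$), which gives $L=\bigoplus Fe_i\subseteq\Symd(\sigma)=\Syms(\sigma)$, and simultaneously $\deg(e_iAe_i)\geq 2$, so $r\leq\frac12\deg A=\kap(A,\sigma)$. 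Your second paragraph is roughly on the right track for the bound, but it presupposes the first assertion and invokes Proposition~\ref{P:inv-sym-restrict-type} without verifying that the types match after passing to $e_iAe_i$; the paper's approach avoids this by working over an algebraically closed field with split $L$.

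Your existence argument is essentially the paper's, with one caveat: the polynomial $\chi_a$ (in particular the Pfaffian characteristic polynomial in the symplectic case) is defined for $a\in\Syms(\sigma)$, not for general $a\in\Symm(\sigma)$. So the Zariski-open condition ``$\chi_a$ is separable'' should be formulated on $\Syms(\sigma)$, as the paper does; the split case (Proposition~\ref{lem:capmaxdimsplit}) and Lemma~\ref{L:split-etale-infinite} show this open set is nonempty over the algebraic closure, and density of $\Syms(\sigma)$ in $\Syms(\sigma)\otimes_F\overline F$ (for $F$ infinite) gives an $F$-point. The finite-field case is subsumed in the split case.
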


\begin{proof}
  Let $L\subseteq \Symm(\sigma)$ be an \'etale $F$-algebra. To show that $L\subseteq \Syms(\s)$ and 
  $[L:F]\leq\kap(A,\sigma)$, we may extend scalars and assume that $F$ is
  algebraically closed. Then $L$ and $Z(A)$ are split. Let $r=[L:F]$ and let $e_1, \ldots, e_r\in L$ be the
  primitive idempotents in $L$. Then $e_ie_j=\delta_{ij}$ for $i,j\in\{1,\dots r\}$ and $\sum_{i=1}^r
  e_i=1$. It follows that
  \[
  \deg A=\sum_{i=1}^r\deg(e_iAe_i).
  \]
If $\s$ is orthogonal or unitary, then we have $\Syms(\s)=\Symm(\s)$ and further $\kap(A,\s)=\deg(A)\geq r$, because $\deg(e_iAe_i)\geq1$ for $i=1,\dots,r$.
Assume now that $\s$ is symplectic.
  Then Proposition~\ref{prop:keepstype} shows for $i=1,\dots,r$ that $\s$ restricts on $e_iAe_i$ to a symplectic  involution, whereby $\deg(e_iAe_i)\geq2$ and $e_i\in \Symd(\s)$. We conclude that $L\subseteq \Symd(\s)$ and $\kap(A,\sigma)=\frac12\deg A=\sum_{i=1}^r\frac12\deg e_iAe_i\geq r$.
  This shows that $L\subseteq\Syms(\s)$ and $[L:F]= r\leq \kap(A,\s)$ in any case.
   
Back in the situation where $F$ is an arbitrary field,
it remains to show that $\Syms(\sigma)$ contains
  an \'etale $F$-algebra $L$ with $[L:F]=\kap(A,\sigma)$. This follows
  from Proposition~\ref{lem:capmaxdimsplit} if $(A,\s)$ is split.
In particular, we may assume that $F$ is infinite.
Let $\overline F$ denote
  an algebraic closure of $F$. Then $\Syms(\sigma)$ is
  Zariski-dense in $\overline{\Syms(\sigma)}=\Syms(\sigma)\otimes_F\overline
  F$. 
An element $a\in \overline{\Syms(\sigma)}$ is separable over $\overline{F}$ if and only if the discriminant of $\chi_a$ is nonzero.
Since this is a polynomial condition,
the elements of $\overline{\Syms(\s)}$ which are separable over $\overline F$ form an open subset of $\overline{\Syms(\sigma)}$, and by Proposition~\ref{lem:capmaxdimsplit} and Lemma~\ref{L:split-etale-infinite} this subset is not empty.
Since $\Syms(\s)$ is dense in $\overline{\Syms(\s)}$, we conclude that there exists an element $a\in \Syms(\s)$ which is separable over $\overline{F}$, and thus separable over $F$. Hence $\chi_a$ is equal to the minimal polynomial of $a$ over $F$.
We conclude that  $F[a]$ is an \'etale $F$-algebra and $[F[a]:F]=\deg(\chi_a)=\kap(A,\sigma)$.
\end{proof}

In the context of the last result we observe that $\Syms(\sigma)$ may contain (non-\'etale) commutative
  $F$-algebras $L$ with $[L:F]>\kap(A,\sigma)$. 
\begin{ex}
   Let $L_0$ be the $F$-subalgebra of $\matr{4}(F)$ consisting of the matrices
  \[ 
  \begin{pmatrix}
  a & 0 & b & c \\
  0 & a & d & e \\
  0 & 0 & a & 0 \\
  0 & 0 & 0 & a \\
  \end{pmatrix}
  \]
with $a,b,c,d,e\in F$.
 Then $L=\{(x,x)\mid x\in L_0\}$ is a $5$-dimensional commutative $F$-subalgebra of  
  $A=\matr{4}(F)\times \matr{4}(F)^\op$.
For the involution  $\s=\sw$ on $A$ we have that  $\kap(A,\s)=4$ and $L\subseteq\Symm(\s)=\Syms(\s)$. 
\end{ex}

\begin{prop}
Let $\Psi:(A,\s)\to (B,\tau)$ be a homomorphism of $F$-algebras with involution with $\kap(A,\s)=\kap(B,\tau)$.
Then $\chi_{\Psi(a)}=\chi_a$ holds for every $a\in\Syms(\s)$.
Furthermore, if $(A,\s)$ and $(B,\tau)$ are of the same type, then $\Psi$ is an isomorphism.
\end{prop}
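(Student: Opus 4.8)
The plan is to treat the two assertions in turn, starting with the identity $\chi_{\Psi(a)}=\chi_a$. First I would observe that, because $\Psi$ is a homomorphism of $F$-algebras with involution, it sends $\Symm(\s)$ into $\Symm(\tau)$ and $\Symd(\s)$ into $\Symd(\tau)$; combined with the type hypothesis this gives $\Psi(\Syms(\s))\subseteq\Syms(\tau)$, so $\chi_{\Psi(a)}$ is defined for $a\in\Syms(\s)$. For the equality itself I would first record that any homomorphism of algebras with involution is injective: the kernel is a two-sided ideal stable under $\s$, hence zero by the definition of an $F$-algebra with involution (note $\Psi(1)=1$, so $\Psi\neq 0$). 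Then I would reduce the computation of $\chi_a$ to the split case by scalar extension to an algebraic closure $\overline F$; over $\overline F$ one identifies $(A,\s)$ and $(B,\tau)$ with adjoint involutions of bilinear/hermitian forms and checks that $\Prd$ (resp.\ $\Prp$) is unchanged under an injective homomorphism of algebras with involution of equal capacity, since it is computed from the action on a vector space of dimension determined by the capacity and the type. An alternative, cleaner route: $\Prd_{A,a}$ and $\Prp_{\s,a}$ both depend only on the subalgebra-with-involution generated by $a$ together with its embedding, and the capacity constraint forces $\Psi$ to preserve the relevant reduced-degree data, so $\chi_a$ is intrinsic.

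For the second assertion, assume in addition that $(A,\s)$ and $(B,\tau)$ have the same type. Since $\Psi$ is injective, it suffices to show $\dim_F A=\dim_F B$, for then $\Psi$ is a linear isomorphism and hence an algebra isomorphism. Here is where the capacity hypothesis does the work: equal type and equal capacity force equal degree, namely $\deg A=\deg B$ (via $\kap$, which is $\deg$ or $\tfrac12\deg$ according to type). If both are of the first kind, $\dim_F A=(\deg A)^2=(\deg B)^2=\dim_F B$ and we are done. If both are unitary, one has $\dim_F A=2(\deg A)^2$ unless $(A,\s)$ is unitary of inner type, in which case $\dim_F A=2(\deg A)^2$ as well — so the dimensions agree once we know the inner/non-inner dichotomy is respected by $\Psi$. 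I would handle this by noting that $Z(A)$ maps into $Z(B)$ under $\Psi$ (central elements go to elements commuting with $\Psi(A)=$ all of $B$ after we know surjectivity — but that is circular); better, compare $[Z(A):F]$ with $[Z(B):F]$ directly: $\Psi|_{Z(A)}$ is an injective $F$-algebra homomorphism $Z(A)\to C_B(\Psi(A))$, and since both $(A,\s),(B,\tau)$ are of the same kind, $[Z(A):F]=[Z(B):F]$, so in all cases $\dim_F A=\dim_F B$.

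The main obstacle I anticipate is the bookkeeping in the unitary case, specifically keeping track of whether $Z(A)$ is a field or $\simeq F\times F$ and making sure $\Psi$ cannot, say, map a field $Z(A)$ into a split $Z(B)$ or vice versa in a way that breaks the dimension count — this is precisely why the ``same type'' hypothesis (which for second-kind involutions is automatic, but the inner-type subtlety lurks) must be invoked carefully rather than just ``same kind''. A secondary technical point is justifying that $\Prp$ (not just $\Prd$) is preserved; I would deduce this from the $\Prd$ case by passing to $C_A(F[a])$ or by the explicit formula $\Prp_{\s,a}(X)^2=\Prd_{A,a}(X)$ up to sign and the fact that $\Prp$ is the Pfaffian characteristic polynomial, monic of degree $\kap(A,\s)=\kap(B,\tau)$, so that equality of squares plus equal degree and monicity forces equality of the polynomials themselves.
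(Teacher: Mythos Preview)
Your overall strategy---scalar extension to an algebraic closure for the polynomial identity, then injectivity plus a dimension count for the isomorphism---is exactly what the paper does. Two points deserve correction, however.

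First, you have misread the hypothesis structure: the identity $\chi_{\Psi(a)}=\chi_a$ is asserted \emph{without} assuming that $(A,\s)$ and $(B,\tau)$ have the same type; only the equality of capacities is available. Indeed the proposition is applied precisely to the cross-type embeddings of Proposition~\ref{P:split-case-embeddings} (orthogonal or unitary into symplectic). So you cannot invoke ``the type hypothesis'' to get $\Psi(\Syms(\s))\subseteq\Syms(\tau)$. Your scalar-extension approach still handles this: over an algebraic closure all split $F$-algebras with involution of a given type and capacity are isomorphic, and one checks the containment and the polynomial identity directly case by case (the delicate case is $\s$ orthogonal, $\tau$ symplectic, $\car F=2$, where one must see that $\Psi(\Symm(\s))\subseteq\Symd(\tau)$; this holds in the model embeddings and hence in general). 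Your alternative via $\Prp_{\tau,\Psi(a)}^2=\Prd_{B,\Psi(a)}$ also works once the containment is established.

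Second, your worry about the unitary inner/non-inner dichotomy in the dimension count is unnecessary. If $(A,\s)$ is unitary then $\dim_FA=2(\deg A)^2$ whether or not $Z(A)$ is a field, so ``same type and same capacity'' already gives $\dim_FA=\dim_FB$ without any further analysis of $\Psi\rvert_{Z(A)}$. The paper's proof of the second assertion is exactly the one-line argument you outline: same type and equal capacity force equal $F$-dimension, and injectivity finishes.
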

\begin{proof}
The characteristic polynomial is invariant under algebra isomorphisms and under scalar extension.
Hence, the first part of the statement is obtained by extending scalars to an algebraically closure,  where it is easy to verify.
Finally, if $(A,\s)$ and $(B,\tau)$ are of the same type then $\dim_FA=\dim_FB$ and since $\Psi$ is injective, it follows that it is an isomorphism of algebras with involution. 
\end{proof}

We give some examples of split algebras with involution and embeddings between them.
For a matrix $\alpha$ with coefficients in a ring we denote by $\alpha^t$ the transpose matrix of $\alpha$.
If $m$ is a positive integer and $\alpha$ and $\beta$ are two $m\times m$ matrices over a ring, then we denote by $\alpha\times \beta$ the $2m\times 2m$ matrix $\left(\begin{smallmatrix}\alpha & 0 \\ 0 & \beta\end{smallmatrix}\right)$.

\begin{prop}\label{P:split-case-embeddings}
Let $m$ be a positive integer.
Let
$$ s:\matr{2m}({F})\to  \matr{2m}({F}), \left(\begin{smallmatrix}\alpha & \beta \\ \gamma & \delta\end{smallmatrix}\right)\longmapsto \left(\begin{smallmatrix}\hphantom{-}\delta^t & -\beta^t \\ -\gamma^t & \hphantom{-}\alpha^t \end{smallmatrix}\right)\,.$$
We have the following:
\begin{enumerate}[$(a)$]
\item $(\matr{2m}(F),s)$ is an $F$-algebra with symplectic involution of capacity~$m$.
\item $\Phi:(\matr{m}(F),t)\to  (\matr{2m}(F), s), \alpha\mapsto \alpha\times{\alpha}$ is a homomorphism of $F$-algebras with involution.
\item $\Psi:(\matr{m}(F)\times\matr{m}(F)^\op,\sw)\to (\matr{2m}(F),s), (\alpha,\beta^\op)\mapsto \alpha\times \beta^t$ is a homomorphism of $F$-algebras with involution. 
\end{enumerate}
\end{prop}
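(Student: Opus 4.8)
The plan is to verify each of the three items essentially by direct computation, reducing the conceptual content to checking that the maps respect the relevant structures.

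For item $(a)$, I would first check that $s$ is an $F$-involution on $\matr{2m}(F)$: linearity is clear, and a short block computation shows $s\circ s=\id$ and that $s$ reverses products (one can recognize $s$ as $\Int(u)\circ t$ for the block matrix $u=\left(\begin{smallmatrix}0&-I_m\\I_m&0\end{smallmatrix}\right)$, whose transpose satisfies $u^t=-u$, i.e. $u$ is alternating). Since the involution is adjoint to an alternating bilinear form, it is symplectic; alternatively, one computes $\Symd(s)$ directly and sees that $1\notin$ it is false but the dimension inequality $\dim_F\Symd(s)<\dim_F\Skew(s)$ holds, so $s$ is symplectic by the definition recalled in Section~\ref{S:cap}. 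As $\deg\matr{2m}(F)=2m$, the capacity is $\frac12\cdot 2m=m$, as claimed.

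For items $(b)$ and $(c)$, the verification splits into two routine checks each: that the map is a homomorphism of $F$-algebras, and that it intertwines the involutions. For $(b)$, the map $\alpha\mapsto\alpha\times\alpha$ is visibly an injective $F$-algebra homomorphism $\matr{m}(F)\to\matr{2m}(F)$ (diagonal embedding), and one computes $s(\alpha\times\alpha)=\left(\begin{smallmatrix}\alpha^t&0\\0&\alpha^t\end{smallmatrix}\right)=\alpha^t\times\alpha^t=\Phi(t(\alpha))$, using that the off-diagonal blocks $\beta,\gamma$ of $\alpha\times\alpha$ are zero. For $(c)$, the map $(\alpha,\beta^\op)\mapsto\alpha\times\beta^t$ is an $F$-algebra homomorphism because on the first component it is the usual embedding and on the opposite component $\beta^\op\mapsto\beta^t$ converts the opposite multiplication into ordinary matrix multiplication (transpose being an anti-automorphism), and the two images commute as they sit in complementary diagonal blocks; then $s(\alpha\times\beta^t)=\left(\begin{smallmatrix}(\beta^t)^t&0\\0&\alpha^t\end{smallmatrix}\right)=\beta\times\alpha^t=\Psi(\sw(\alpha,\beta^\op))$, as required.

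I do not expect a genuine obstacle here: the proposition is a bookkeeping statement assembling explicit split models, and the only point demanding a little care is confirming the \emph{symplectic} type in $(a)$ in a characteristic-free way. The cleanest route is to present $s$ as the adjoint involution of the nondegenerate alternating form with Gram matrix $u$ above and invoke the definition of symplectic from Section~\ref{S:cap}; this simultaneously handles $\car F=2$ without a separate argument. Everything else is a matter of writing out $2\times 2$ block matrix products and reading off the result.
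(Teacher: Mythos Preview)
Your proposal is correct and matches the paper's approach: the paper's entire proof reads ``This is obvious,'' and what you have written is precisely the routine verification one would carry out to justify that. Your identification $s=\Int(u)\circ t$ with $u$ alternating is exactly the content of the paper's Example~\ref{E:cap2-split} (there written with $J=-u$), and it gives the cleanest characteristic-free reason for $s$ being symplectic; the aside about computing $\Symd(s)$ directly is phrased awkwardly (you seem to mean that $1\in\Symd(s)$, which is indeed the case) and could simply be dropped.
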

\begin{proof}
This is obvious.
\end{proof}

We consider the case $m=2$.

\begin{ex}\label{E:cap2-split}
Let $\mathbb{I}_2=\left(\begin{smallmatrix} 1&0\\ 0&1 \end{smallmatrix}\right) \in\matr{2}({F})\mbox{ and }
J=\left(\begin{smallmatrix} 0 & \mathbb{I}_2 \\ -\mathbb{I}_2 & 0\end{smallmatrix}\right) \in\matr{4}({F})\,.$
The involution $s:\matr{4}({F})\to  \matr{4}({F})$ of Proposition~\ref{P:split-case-embeddings} is given by $\Int(J)\circ t$, where $t$ is the transposition involution.
The $F$-space $\Symd(s)$ consists of the matrices
$$\left(\begin{matrix}
a & b & 0 & e\\
      c & d & -e& 0\\
      0 & f & a & c\\
      -f& 0 & b & d
\end{matrix}\right)$$
with $a,b,c,d,e,f\in F$.
For later use we note that the determinant of such a matrix is equal to $(ad-bc+ef)^2$.
\end{ex}

Let $d=\kap(A,\s)$. For $a\in \Syms(\s)$ we write 
\[
\chi_a=X^d-c_1(a)X^{d-1}+c_2(a)X^{d-2}-\cdots+(-1)^dc_d(a)
\]
and observe that this defines  a form $c_i\colon \Syms(\sigma)\to F$ of degree~$i$ for $i=1,\dots,d$.

\medskip
We recall some quadratic form terminology from \cite[(7.17)]{EKM}.
Let $q:V\to F$ be a quadratic form over $F$, defined on a finite-dimensional $F$-vector space $V$.
We denote by $b_q$ the \emph{polar form of~$q$} given by $$V\times V\to F, (x,y)\mapsto q(x+y)-q(x)-q(y)\,.$$
We further set 
\begin{eqnarray*}\rad(b_q) & = & \{x\in V\mid b_q(x,y)=0\mbox{ for all } y\in V\}\\
\rad(q) & = & \{x\in \rad(b_q)\mid q(x)=0\}\,
\end{eqnarray*}
and observe that these are $F$-subspaces of $V$ with $\rad(q)\subseteq \rad(b_q)$. 
Moreover,
if $\car F\neq 2$ then $q(x)=\frac{1}{2}b_q(x,x)$ for all $x\in V$ and thus $\rad(q)=\rad(b_q)$.
We call the quadratic form $q$ \emph{regular} if $\rad(q)=\{0\}$ and \emph{nondegenerate} 
 if $q$ is regular and $\dim_F\rad(b_q)\leqslant 1$.

\begin{prop}\label{P:cap2-c2-regquad}
Assume that $\kap(A,\s)=2$ and set $V=\Syms(\s)$.
Then
$$\dim_F V=\left\{\begin{array}{ll} 3 & \mbox{ if $\s$ is orthogonal},\\
4 & \mbox{ if $\s$ is unitary},\\
6 & \mbox{ if $\s$ is symplectic}
\end{array}\right.$$
and $c_2|_V:V\to F$ is a nondegenerate quadratic form over $F$, also given by the rule $x\mapsto x\ovl{x}$ where $\ovl{x}=c_1(x)-x$ for $x\in V$.
\end{prop}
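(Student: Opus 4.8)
The plan is to dispose of the identity $c_2(x)=x\ovl x$ directly and uniformly, and then to obtain both the dimension formula and the nondegeneracy of $c_2|_V$ by extending scalars to an algebraic closure, reducing the problem to a few explicit split models. For the identity, recall that $\chi_x=X^2-c_1(x)X+c_2(x)$ has $x$ as a root, so that $x^2-c_1(x)x+c_2(x)=0$ in $A$; identifying $F$ with $F\cdot 1\subseteq A$, this rearranges to $c_2(x)=c_1(x)x-x^2=x\bigl(c_1(x)-x\bigr)=x\ovl x$. I would also note that $\ovl x=c_1(x)-x$ again lies in $V$: it is fixed by $\s$ since $\s$ fixes both $c_1(x)$ and $x$, and when $\s$ is symplectic it belongs to $\Symd(\s)$ because $1\in\Symd(\s)$ and $\Symd(\s)$ is an $F$-subspace. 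Hence $c_2|_V$ is genuinely the form $x\mapsto x\ovl x$ on $V$, whose polar form is $b_{c_2}(x,y)=x\ovl y+y\ovl x$ since $c_1$ is linear.

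For the dimension and the nondegeneracy I would first observe that the type of $\s$, the space $\Syms(\s)$, and the polynomial $\chi$ — hence the quadratic form $c_2|_V$ — are all compatible with scalar extension, and that a quadratic form $q$ over $F$ is nondegenerate as soon as $q\otimes_F\ovl F$ is, since $\rad(b_q)\otimes_F\ovl F=\rad(b_{q\otimes\ovl F})$ whereas $\rad(q)\otimes_F\ovl F\subseteq\rad(q\otimes\ovl F)$. Thus we may take $F$ algebraically closed, so that $(A,\s)$ is split and, up to isomorphism of algebras with involution, equals $(\matr{2}(F),t)$ when $\s$ is orthogonal, $(\matr{2}(F)\times\matr{2}(F)^{\op},\sw)$ when $\s$ is unitary, and the algebra $(\matr{4}(F),s)$ of Proposition~\ref{P:split-case-embeddings} with $m=2$ when $\s$ is symplectic, the relevant involutions being unique up to conjugacy over an algebraically closed field. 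In these three cases $V$ is $\Symm(t)$, respectively $\{(x,x^{\op})\mid x\in\matr{2}(F)\}$, respectively $\Symd(s)$, of $F$-dimension $3$, $4$ and $6$, the last by the explicit description in Example~\ref{E:cap2-split}. The form $c_2$ is read off from $\chi$: it is the determinant $ac-b^2$ of a symmetric matrix $\bigl(\begin{smallmatrix}a&b\\ b&c\end{smallmatrix}\bigr)$ in the orthogonal case; it is $\det x$ in the unitary case; and in the symplectic case $c_2(a)^2=\Nrd_A(a)=\det a=(ad-bc+ef)^2$ by Example~\ref{E:cap2-split}, whence $c_2(a)=\pm(ad-bc+ef)$ in the coordinates used there.

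It then remains to check that each of these forms is nondegenerate: $\det$ on $\matr{2}(F)$ and $\pm(ad-bc+ef)$ are orthogonal sums of two, respectively three, hyperbolic planes, while $ac-b^2$ is a hyperbolic plane together with a one-dimensional form that is anisotropic if $\car F\neq 2$ and is $b\mapsto b^2$ if $\car F=2$. The characteristic-two orthogonal case is the delicate one: there $c_2|_V$ is regular but its polar form has a one-dimensional radical, so one must verify that this borderline situation is exactly what the definition of nondegeneracy permits and that it descends correctly to $F$ via the two radical comparisons above. With this done in all three types, the statement follows.
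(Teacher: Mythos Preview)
Your proof is correct and follows essentially the same approach as the paper: derive $c_2(x)=x\ovl x$ from $\chi_x(x)=0$, then extend scalars to an algebraic closure and verify nondegeneracy in the split models for each type. The only cosmetic differences are that the paper unifies the three types via the single parametrisation $\Gamma$ from Example~\ref{E:cap2-split} (restricted through the embeddings $\Phi,\Psi$ of Proposition~\ref{P:split-case-embeddings}) and cites \cite[(2.6)]{BOI} for $\dim_FV$ directly, whereas you compute the dimensions in the split models; your explicit justification that nondegeneracy descends from $\ovl F$ to $F$ is a point the paper passes over in one phrase.
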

\begin{proof}
By the definitions of the capacity and of $V$ the value of $\dim_FV$ follows from \cite[(2.6)]{BOI}.
For $x\in V$ we have
$c_2(x)=c_2(x)-\chi_x(x)=-x^2+c_1(x)x=x\ovl{x}$.

To show that the quadratic form $c_2$ is nondegenerate we may extend scalars and assume that $F$ is algebraically closed. 
Note that $(A,\s)$ is isomorphic to any $F$-algebra with involution of same type and of capacity~$2$.
It thus suffices to prove that $c_2$ is nondegenerate for a convenient choice of $(A,\s)$. 

Consider the $F$-linear map $$\Gamma:F^6\to \matr{4}({F})\,,(a,b,c,d,e,f)\longmapsto\left(\begin{matrix}
a & b & 0 & e\\
      c & d & -e& 0\\
      0 & f & a & c\\
      -f& 0 & b & d
\end{matrix}\right)\,.$$
The characteristic polynomial of $\Gamma(a,b,c,d,e,f)$ is
$(T^2-(a+d)T+(ad-bc+ef))^2$, hence its Pfaffian polynomial is $T^2-(a+d)T+(ad-bc+ef)$.
In particular, $c_2(\Gamma(a,b,c,d,e,f))=ad-bc+ef$.

Suppose that $\s$ is symplectic.
Then $(A,\s)$ is identified with $(\matr{4}(F),s)$, whereby $V=\Gamma(F^6)$.
Hence the  form $c_2$ is given by the polynomial $X_1X_4-X_2X_3+X_5X_6$, thus it is hyperbolic and in particular nondegenerate.

Suppose that $\s$ is unitary.
Then $(A,\s)$ is identified with the image of $\Psi$, thus $V=\{\Gamma(a,b,c,d,0,0)\mid a,b,c,d\in F\}$.
Hence, $c_2$ is given by the polynomial $X_1X_4-X_2X_3$, thus it is hyperbolic and in particular nondegenerate.

Suppose that $\s$ is orthogonal. 
Then $(A,\s)$ is identified with the image of $\Phi$. Thus we have $V=\{\Gamma(a,b,b,d,0,0)\mid a,b,d\in F\}$. 
Hence, $c_2$ is given by the polynomial $X_1X_4-X_2^2$, thus it is nondegenerate.
\end{proof}

\section{Neat subalgebras} 
\label{S:neat}

Let $(A,\sigma)$ be an $F$-algebra with involution. In this section we study \'etale subalgebras of $A$ that are contained in $\Symm(\s)$.
An $F$-subalgebra
$L$ of $A$ is called \emph{neat in $(A,\s)$} or a \emph{neat
  subalgebra of $(A,\sigma)$} if $L$ is \'etale,
$L\subseteq \Symm(\sigma)$, $A$~is free as a left $L$-module and 
for each nonzero idempotent $e$ of $L$, the 
$F$-algebra with involution $(eAe,\sigma\rvert_{eAe})$ has the same type as $(A,\s)$.

\begin{ex}
Any separable field extension of $F$ contained in $\Symm(\s)$ is neat in $(A,\s)$; this follows by Proposition~\ref{P:inv-sym-restrict-type}, using further Proposition~\ref{P:inner-unitary-neat} below in the case where $(A,\s)$ is unitary of inner type.
\end{ex}

By Proposition~\ref{prop:keepstype}, in the definition of neatness the only case where the condition on the idempotents does not follow from the other conditions is when $\car F=2$ and $\s$ is orthogonal.

\begin{prop}\label{P:idempotsymd}
Assume that $\car F=2$ and that $\s$ is orthogonal.
Let $L$ be an \'etale $F$-subalgebra of $A$ with $L\subseteq \Symm(\s)$.
Then the following are equivalent:
\begin{enumerate}[$(i)$]
\item $\Symd(\s)$ contains no nonzero idempotent of $L$.
\item $\Symd(\s)$ contains no primitive idempotent of $L$.
\item For every nonzero idempotent $e$ of $L$, the involution $\s|_{eAe}$ is orthogonal.
\end{enumerate}
In particular, $L$ is neat in $(A,\s)$ if and only if  $A$ is free as a left $L$-module and any of the Conditions $(i)$--$(iii)$ is satisfied.
\end{prop}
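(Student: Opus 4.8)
The plan is to derive all three equivalences, and the final clause, from Proposition~\ref{prop:keepstype}. In the present situation ($\car F=2$ and $\s$ orthogonal) that proposition says precisely that for a nonzero idempotent $e$ of $L$ (which lies in $\Symm(\s)$ since $L\subseteq\Symm(\s)$) the restricted involution $\s|_{eAe}$ is symplectic when $e\in\Symd(\s)$ and orthogonal when $e\notin\Symd(\s)$; in particular, $\s|_{eAe}$ is orthogonal if and only if $e\notin\Symd(\s)$. Quantifying this over all nonzero idempotents of $L$ gives at once the equivalence of $(i)$ and $(iii)$, and $(i)\Rightarrow(ii)$ is trivial since primitive idempotents are nonzero.

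The only implication that needs work is $(ii)\Rightarrow(i)$, and the key point is a symmetrisation identity already exploited in the proof of Proposition~\ref{prop:keepstype}. Suppose $(i)$ fails, say $e\in\Symd(\s)$ for some nonzero idempotent $e$ of $L$, and write $e=a+\s(a)$ with $a\in A$. Since $L$ is \'etale we may pick a primitive idempotent $g$ of $L$ with $ge=eg=g$; using $\s(g)=g$ one computes
\[
gag+\s(gag)=gag+g\,\s(a)\,g=g\,(a+\s(a))\,g=geg=g,
\]
so $g\in\Symd(\s)$, contradicting $(ii)$. Hence $(ii)\Rightarrow(i)$, and all three conditions are equivalent.

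For the final assertion, recall that under the standing hypotheses $L$ is \'etale and $L\subseteq\Symm(\s)$, so by definition $L$ is neat in $(A,\s)$ exactly when $A$ is free as a left $L$-module and, for every nonzero idempotent $e$ of $L$, the algebra with involution $(eAe,\s|_{eAe})$ has the same type as $(A,\s)$; as $(A,\s)$ is orthogonal and the restricted involution is again of the first kind (hence orthogonal or symplectic), this last requirement is exactly Condition~$(iii)$, which by the equivalences just established may be replaced by $(i)$ or $(ii)$. I do not expect a genuine obstacle here: the argument is essentially a single application of Proposition~\ref{prop:keepstype} together with the one-line identity above, and the only point requiring care is to invoke that proposition in the characteristic-two orthogonal regime and to identify ``same type as $(A,\s)$'' with ``$\s|_{eAe}$ orthogonal''.
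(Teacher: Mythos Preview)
Your proof is correct and follows essentially the same route as the paper: the equivalence $(i)\Leftrightarrow(iii)$ is obtained from Proposition~\ref{prop:keepstype}, and the implication $(ii)\Rightarrow(i)$ uses the same symmetrisation identity $g=gag+\s(gag)$ for a primitive idempotent $g$ below $e$. The final clause is handled identically by unwinding the definition of neatness.
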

\begin{proof}If a nonzero idempotent $e$ of $L$ is contained in $\Symd(\s)$, then there exists a primitive idempotent $e'$ of $L$ with $e'e=e'$, and writing $e=a+\s(a)$ with $a\in A$ we obtain that $e'=e'ae'+\s(e'ae')\in\Symd(\s)$.
Hence $(i)$ and $(ii)$ are equivalent.

For any nonzero idempotent $e\in L$, 
the involution $\s|_{eAe}$ on $eAe$ is symplectic if and only if $e\in\Symd(\s)$.
This shows that $(i)$ and $(iii)$ are equivalent.
\end{proof}

Neat subalgebras of algebras with unitary involution of inner type are described in the next proposition. 
  
\begin{prop}\label{P:inner-unitary-neat}
Assume that $(A,\s)=(A_0\times A_0^\op,\sw)$ for a central simple $F$-algebra $A_0$.
Any \'etale $F$-subalgebra $L_0$ of $A_0$ gives rise to an \'etale $F$-subalgebra $L=\{(x,x^\op)\mid x\in L_0\}$ of $A$ contained in $\Symm(\s)$ and isomorphic to $L_0$.
Conversely, any \'etale $F$-subalgebra $L$ of $A$ contained in $\Symm(\s)$ is obtained in this way.
Furthermore, $L$ is neat in $(A,\s)$ if and only if $A_0$ is free as an $L_0$-left module.
\end{prop}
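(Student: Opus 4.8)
The plan is to establish the three assertions in turn: the correspondence between étale subalgebras of $A_0$ and étale subalgebras of $A$ contained in $\Symm(\s)$ in both directions, and then the neatness criterion. For the first assertion, given an étale $F$-subalgebra $L_0\subseteq A_0$, I would check directly that $L=\{(x,x^\op)\mid x\in L_0\}$ is an $F$-subalgebra of $A=A_0\times A_0^\op$: it is closed under the operations since $(x,x^\op)(y,y^\op)=(xy,(yx)^\op)$ and on a \emph{commutative} subalgebra $yx=xy$, so $L$ is a commutative subalgebra isomorphic to $L_0$ via $x\mapsto(x,x^\op)$, hence étale. Finally $\sw(x,x^\op)=(x,x^\op)$ by definition of the switch involution, so $L\subseteq\Symm(\s)$.

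For the converse, let $L\subseteq A$ be an étale $F$-subalgebra contained in $\Symm(\s)$. Writing a general element of $A$ as $(a,b^\op)$, the symmetry condition $\sw(a,b^\op)=(a,b^\op)$ reads $(b,a^\op)=(a,b^\op)$, i.e.\ $a=b$; so every element of $L$ has the shape $(x,x^\op)$. Set $L_0=\{x\in A_0\mid (x,x^\op)\in L\}$. The map $(x,x^\op)\mapsto x$ is then a bijection $L\to L_0$; it is additive, and it is multiplicative because $(x,x^\op)(y,y^\op)=(xy,(yx)^\op)$, which equals $(xy,(xy)^\op)$ precisely when $yx=xy$ — and this holds since $L$ is commutative. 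Hence $L_0$ is an $F$-subalgebra of $A_0$, isomorphic to $L$, and therefore étale; moreover $L$ arises from $L_0$ as in the first assertion.

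For the neatness criterion, recall that $(A,\s)$ is unitary of inner type. The idempotents of $L$ correspond under the above isomorphism to idempotents $\varepsilon\in L_0$, with $e=(\varepsilon,\varepsilon^\op)$ and $eAe=\varepsilon A_0\varepsilon\times(\varepsilon A_0\varepsilon)^\op$, on which $\s|_{eAe}$ is again the switch involution; so by Proposition~\ref{prop:keepstype} every $(eAe,\s|_{eAe})$ is automatically unitary of inner type, matching the type of $(A,\s)$. Thus the only remaining requirement in the definition of neatness is that $A$ be free as a left $L$-module. Now $A=A_0\times A_0^\op$ as a left $L\simeq L_0$-module decomposes as the left $L_0$-module $A_0$ together with $A_0^\op$ viewed as a left $L_0$-module; but the left action of $x\in L_0$ on $A_0^\op$ is right multiplication by $x$ on $A_0$, and since $L_0$ is commutative this makes $A_0^\op$ isomorphic as a left $L_0$-module to $A_0$ with its left regular $L_0$-action (using that $A_0$ is free, hence in particular its left and right $L_0$-module structures have the same Hilbert series over the components of $L_0$). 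So $A$ is free as a left $L$-module if and only if $A_0$ is free as a left $L_0$-module. The main obstacle is the bookkeeping in this last step: making precise why freeness of $A_0$ as a left $L_0$-module forces freeness of $A_0^\op$ as a left $L_0$-module. I expect this to go through cleanly by decomposing $L_0$ into its field components $L_0\simeq K_1\times\cdots\times K_r$ (as in the proof of Lemma~\ref{lem:module-centre}): over each component $K_i$ both the left and the right regular actions of $K_i$ on the corresponding block of $A_0$ give $K_i$-vector spaces of the same dimension $\dim_{K_i}$, since $\dim_F$ of the block is the same computed either way, so freeness is equivalent for the two structures.
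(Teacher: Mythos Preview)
Your proposal is correct and supplies the details that the paper omits: the paper's own proof is simply ``This is obvious.'' Your treatment of the correspondence $L_0\leftrightarrow L$ and of the automatic type condition (via the explicit description $eAe=\varepsilon A_0\varepsilon\times(\varepsilon A_0\varepsilon)^\op$ with the switch involution) is exactly what is meant.

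One remark on the freeness step. You correctly identify that $A$ as a left $L$-module decomposes as $A_0$ with its left $L_0$-action together with $A_0$ with its right $L_0$-action (viewed as a left module since $L_0$ is commutative), so the question reduces to whether left-freeness and right-freeness of $A_0$ over $L_0$ are equivalent. Rather than re-deriving this via the component decomposition, you can simply invoke Proposition~\ref{prop:neat}, where the equivalence of conditions~$(c)$ and~$(d)$ is precisely this statement (and the proof there already does the dimension count $\dim_F e_iA_0=\dim_F A_0e_i$ that you sketch). Your reference to Lemma~\ref{lem:module-centre} should be to Proposition~\ref{prop:neat} instead. Also note that this equivalence is needed for \emph{both} directions of the ``if and only if'': in particular, to deduce freeness of $A_0$ from freeness of $A$, you need that the two summands have the same dimension over each field component $K_i$, since in general a direct summand of a free module over a product of fields is not free.
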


\begin{proof}
This is obvious.
\end{proof}

The previous proposition allows us to reformulate the condition of neatness in different ways in all cases complementary to the case treated in Proposition~\ref{P:idempotsymd}.

\begin{prop}\label{P:neat-equiv}
Assume that $\car F\neq 2$ or that $\s$ is symplectic or unitary.
Let $L$ be an \'etale $F$-subalgebra of $A$ with
$L\subseteq \Symm(\sigma)$.
The following conditions are equivalent:
\begin{enumerate}[$(a)$]
\item $[L:F]\cdot\dim_F C_A(L)=\dim_F A$;
\item all simple components of $C_A(L)$ have the same degree;
\item $L$ is neat in $(A,\s)$.
\end{enumerate}
\end{prop}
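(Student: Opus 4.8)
The plan is to reduce the statement to the general Proposition~\ref{prop:neat} applied to a well-chosen simple algebra, and then to identify its conditions $(a)$--$(d)$ with the ones in Proposition~\ref{P:neat-equiv}. First I would split into the two kinds. If $\s$ is of the first kind, then $Z(A)=F$ and $A$ is central simple, so Proposition~\ref{prop:neat} applies directly to $A$ with the given $L$: its conditions $(a)$, $(b)$, $(c)$ are then literally conditions $(a)$ and $(b)$ here together with ``$A$ free as a left $L$-module''. It therefore remains only to see that, under the hypothesis $\car F\neq 2$ or $\s$ symplectic, freeness of $A$ as a left $L$-module is equivalent to neatness, i.e.\ that the idempotent condition in the definition of neatness is automatic; this is exactly the content of the remark following Proposition~\ref{prop:keepstype} (the only exceptional case being $\car F=2$ with $\s$ orthogonal, which is excluded). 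If $\s$ is unitary but $Z(A)$ is a field, then $A$ is simple with separable centre $Z(A)$, and $L\subseteq\Symm(\s)$ is $F$-linearly disjoint from $Z(A)$ because $L$ consists of symmetric elements while $\s$ acts nontrivially on $Z(A)$ (so $L\cap Z(A)=F$, and an \'etale algebra meeting the quadratic \'etale algebra $Z(A)$ only in $F$ is linearly disjoint from it); thus Proposition~\ref{prop:neat} again applies, and the same identification of conditions goes through, the idempotent condition being automatic for unitary involutions by Proposition~\ref{prop:keepstype}.

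The remaining case is $\s$ unitary of inner type, i.e.\ $(A,\s)\simeq(A_0\times A_0^\op,\sw)$; here $Z(A)\simeq F\times F$ is not a field, so Proposition~\ref{prop:neat} does not apply on the nose, but Proposition~\ref{P:inner-unitary-neat} does the work. By that proposition $L=\{(x,x^\op)\mid x\in L_0\}$ for a unique \'etale $F$-subalgebra $L_0$ of $A_0$, and $L$ is neat if and only if $A_0$ is free as a left $L_0$-module. So I would apply Proposition~\ref{prop:neat} to the central simple $F$-algebra $A_0$ with the \'etale subalgebra $L_0$ (which is trivially linearly disjoint from $Z(A_0)=F$): freeness of $A_0$ as a left $L_0$-module is equivalent to $[L_0:F]\cdot\dim_F C_{A_0}(L_0)=\dim_F A_0$ and to all simple components of $C_{A_0}(L_0)$ having equal degree. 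Finally one transports these along the isomorphism $A\simeq A_0\times A_0^\op$: one has $C_A(L)=C_{A_0}(L_0)\times C_{A_0}(L_0)^\op$, so $\dim_F C_A(L)=2\dim_F C_{A_0}(L_0)$, $\dim_F A=2\dim_F A_0$, and $[L:F]=[L_0:F]$, whence condition $(a)$ here is equivalent to the corresponding equality for $A_0$; likewise the simple components of $C_A(L)$ are those of $C_{A_0}(L_0)$ taken twice, so condition $(b)$ matches as well.

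The only genuinely delicate point is bookkeeping rather than a real obstacle: one must be careful that in the unitary (non-inner) case the hypothesis of Proposition~\ref{prop:neat}, namely that $Z(A)$ is separable over $F$ and that $L$ is $F$-linearly disjoint from $Z(A)$, is actually met --- the first because $Z(A)$ is quadratic \'etale, the second by the symmetry argument above --- and that, when invoking Lemma~\ref{lem:module-centre} implicitly through Proposition~\ref{prop:neat}, ``$A$ free as a left $L$-module'' is the right notion (as opposed to free over $Z(A)L$), which is precisely what that lemma guarantees. I expect the write-up to be short: assemble the three cases, in each citing Proposition~\ref{prop:neat} (or Proposition~\ref{P:inner-unitary-neat} plus Proposition~\ref{prop:neat} for $A_0$) for the equivalence of $(a)$ and $(b)$ with left-freeness, and the remark after Proposition~\ref{prop:keepstype} for the equivalence of left-freeness with neatness under the standing hypothesis.
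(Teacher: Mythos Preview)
Your proposal is correct and follows essentially the same approach as the paper: the paper's proof splits into the cases ``$A$ simple'' (your first two cases combined) and ``$A$ not simple'' (your inner-type case), invoking Proposition~\ref{prop:neat} and Proposition~\ref{P:inner-unitary-neat} exactly as you do, and noting via Proposition~\ref{prop:keepstype} that the idempotent condition is automatic under the hypothesis. You are in fact slightly more careful than the paper in explicitly verifying the linear-disjointness hypothesis of Proposition~\ref{prop:neat} in the unitary non-inner case, which the paper leaves implicit.
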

\begin{proof}
In view of the hypothesis and Proposition~\ref{prop:keepstype}, $L$ is neat in $(A,\s)$ as soon as $A$ is free as a left $L$-module.
If $A$ is simple, then the equivalences follow directly from Proposition~\ref{prop:neat}.
If $A$ is not simple, then $(A,\s)\simeq (A_0\times A_0^\op,\sw)$ for a central simple $F$-algebra $A_0$ and the equivalences follow by Proposition~\ref{P:inner-unitary-neat} along with Proposition~\ref{prop:neat}.
\end{proof}

The following proposition shows that the notion of neat
  subalgebra is preserved under scalar extension.

\begin{prop}\label{prop:neat-ext}
Let $L$ be a commutative $F$-subalgebra of $(A,\s)$ and let $F'/F$ be a field extension.
Then $L\otimes_FF'$ is neat in $(A_{F'},\s_{F'})$ if and only if $L$ is neat in $(A,\s)$.
\end{prop}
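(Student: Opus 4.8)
The plan is to show that each of the three defining conditions of neatness—being \'etale, lying in $\Symm$, and freeness of the algebra over the subalgebra plus the idempotent condition—is stable under the field extension $F'/F$ in both directions. The point is that a commutative $F$-subalgebra $L$ of $A$ gives a commutative $F'$-subalgebra $L\otimes_FF'$ of $A_{F'}=A\otimes_FF'$, and $\Symm(\s_{F'})=\Symm(\s)\otimes_FF'$, so $L\subseteq\Symm(\s)$ if and only if $L\otimes_FF'\subseteq\Symm(\s_{F'})$. The \'etale hypothesis in the statement is subtle: \'etaleness is \emph{not} preserved under arbitrary scalar extension in one direction, but here $L$ is assumed commutative on one side and $L\otimes_FF'$ \'etale on the other, and one should note that $L\otimes_FF'$ \'etale over $F'$ forces $L$ to be \'etale over $F$ (a commutative finite $F$-algebra whose base change is reduced and separable is itself \'etale), so we may treat ``$L$ \'etale'' and ``$L\otimes_FF'$ \'etale'' as equivalent under the blanket commutativity hypothesis; I would state this equivalence explicitly at the outset.

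Next I would handle the freeness condition. Here I would invoke Proposition~\ref{prop:neat} (for $A$ simple) together with the equivalent characterization ``all simple components of $C_A(L)$ have the same degree'', which is a numerical condition; alternatively, and more directly, I would use the purely dimension-theoretic criterion $(a)$, namely $[L:F]\cdot\dim_FC_A(L)=\dim_FA$. Centralisers behave well under scalar extension, $C_{A_{F'}}(L\otimes_FF')=C_A(L)\otimes_FF'$, and all the dimensions multiply by $[F':F]$ (when this is finite) or, in general, one argues that $C_A(L)\otimes_FF'$ computes the centraliser and the equality of dimensions over $F$ holds if and only if it holds over $F'$ since both sides are obtained by $\otimes_FF'$. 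So condition $(a)$ of Proposition~\ref{prop:neat} is manifestly stable in both directions. In the non-simple case $(A,\s)\simeq(A_0\times A_0^{\op},\sw)$, this structure is preserved by scalar extension and one reduces to the same freeness statement for $A_0$ over $L_0$ via Proposition~\ref{P:inner-unitary-neat}, again a dimension count.

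The one genuinely delicate point—and the step I expect to be the main obstacle—is the idempotent condition in characteristic two with $\s$ orthogonal, since by Proposition~\ref{P:idempotsymd} neatness there is equivalent to freeness plus the requirement that no nonzero idempotent of $L$ lies in $\Symd(\s)$. I would argue as follows: the primitive idempotents of $L\otimes_FF'$ need not be defined over $F$, so one cannot directly match them up; instead, one uses that $\Symd(\s_{F'})=\Symd(\s)\otimes_FF'$ and that for an idempotent $e\in L$, $e\in\Symd(\s)$ if and only if $e\in\Symd(\s_{F'})$ (because $\Symd(\s)=\Symm(\s)\cap(\Symd(\s)\otimes_FF')$ intersected back with $A$; more carefully, $e=a+\s(a)$ for some $a\in A_{F'}$ forces, by averaging or by an $F$-linear projection argument, $e=a'+\s(a')$ for some $a'\in A$). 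For the converse direction—neatness of $L$ over $F$ implying neatness of $L\otimes_FF'$—one notes that if some primitive idempotent $e'$ of $L\otimes_FF'$ lay in $\Symd(\s_{F'})$, then the minimal idempotent $e$ of $L$ dominating it (i.e.\ with $ee'=e'$) would, by the argument in the proof of Proposition~\ref{P:idempotsymd}, also lie in $\Symd(\s_{F'})$, hence in $\Symd(\s)$, contradicting neatness of $L$. In the other direction one uses equivalence $(iii)$ of Proposition~\ref{P:idempotsymd}: the involution $\s|_{eAe}$ is orthogonal if and only if $\s_{F'}|_{e(A_{F'})e}$ is orthogonal, since the type of an involution is detected by the dimension of $\Symd$, which is stable under scalar extension by Proposition~\ref{prop:keepstype}. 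Assembling these pieces gives the equivalence in all cases.
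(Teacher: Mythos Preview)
Your treatment of the \'etale condition, the $\Symm$ condition, and freeness is fine and matches the paper's approach via the dimension criterion in Proposition~\ref{prop:neat} (resp.\ Proposition~\ref{P:neat-equiv}). The easy direction of the idempotent condition in characteristic~$2$ with $\sigma$ orthogonal ($L'$ neat $\Rightarrow$ $L$ neat) is also correct, since idempotents of $L$ are idempotents of $L'$ and $\Symd(\sigma)=A\cap\Symd(\sigma')$.

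The genuine gap is in the hard direction, $L$ neat $\Rightarrow$ $L'$ neat, in the characteristic~$2$ orthogonal case. You claim that if a primitive idempotent $e'$ of $L'$ lies in $\Symd(\sigma')$, then the primitive idempotent $e$ of $L$ dominating it (with $ee'=e'$) also lies in $\Symd(\sigma')$, ``by the argument in the proof of Proposition~\ref{P:idempotsymd}''. But that argument runs the other way: it shows that if a \emph{larger} idempotent $e$ lies in $\Symd$, then any $e'\leq e$ does too (via $e'=e'ae'+\sigma(e'ae')$ when $e=a+\sigma(a)$). Going from small to large is false in general: with $\sigma$ orthogonal one always has $1\notin\Symd(\sigma')$, yet Example~\ref{ex:orthrestricttosymp} exhibits a proper idempotent lying in $\Symd$. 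Your alternative via condition~$(iii)$ does not rescue this direction either, since it only controls the type of $\sigma'|_{e'A'e'}$ for idempotents $e'$ already in $L$, not for the new idempotents appearing in $L'$.

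The paper handles this direction by a descent argument you are missing. One first reduces to the case where $F'/F$ is a finite Galois extension: purely transcendental and purely inseparable extensions create no new idempotents in a separable (hence \'etale) algebra, so one may assume $F'/F$ is separable algebraic, and then pass to the Galois closure using the already-established easy direction. With $F'/F$ Galois, the Galois group acts on $A'$ and on $L'$ fixing $A$ and $L$, and it preserves $\Symd(\sigma')=\Symd(\sigma)\otimes_FF'$. If a primitive idempotent $e'\in L'$ lies in $\Symd(\sigma')$, then so do all its Galois conjugates; their sum $e$ is a Galois-fixed nonzero idempotent, hence lies in $L$, and $e\in\Symd(\sigma')\cap A=\Symd(\sigma)$, contradicting neatness of $L$. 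This Galois-orbit summation is the missing idea.
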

\begin{proof}
Let $L'=L\otimes_FF'$, $\s'=\s_{F'}$ and $A'=A_{F'}$.
Clearly the $F'$-algebra $L'$ is \'etale resp.~contained in
$\Symm(\s')$ if and only if the $F$-algebra $L$ is
\'etale~resp.~contained in $\Symm(\s)$.
Note that $C_{A'}(L')=C_A(L)\otimes_FF'$.
Hence, if $\car F\neq 2$ or if $\s$ is symplectic or unitary, then the statement follows immediately by Proposition~\ref{P:neat-equiv}.

We may therefore assume that $\car F=2$ and that $\s$ is orthogonal. 
It follows by Proposition~\ref{prop:neat} that $A'$ is free as a left $L'$-module if and only if $A$ is free as a left
  $L$-module. Hence, it suffices to check the condition on the
  idempotents.
As $\Symd(\s)\subseteq \Symd(\s')$ and $L\subseteq L'$, if $\Symd(\s')$ does not contain any nonzero idempotents of $L'$, then $\Symd(\s)$ does not contain any nonzero idempotents of $L$.
Hence, if $L'$ is neat in $(A',\s')$, then $L$ is neat in $(A,\s)$.

To show the converse implication, we will first reduce the problem to the case where $F'/F$ is a Galois extension.
Observe that if $F'/F$ is either a purely transcendental extension or a purely inseparable algebraic extension,  then under scalar extension from $F$ to $F'$ every
  separable field extension of $F$ remains a field, whereby
  $L\otimes_FF'$ does not acquire new idempotents.
This observation allows us to reduce to the case where $F'/F$ is a separable algebraic extension.
Assuming now that $L'$ is not neat in $(A',\s')$, we may replace $F'$ by its Galois closure over $F$; by the implication that is already shown, the fact that $L'$ is not neat in $(A',\s')$ will be conserved.

With the assumption that $F'/F$ is a Galois extension, the Galois group acts naturally on $A'$ and on  $L'$ by fixing $A$
and $L$, respectively.
In view of Proposition~\ref{P:idempotsymd} we may choose a primitive idempotent $e'$ of $L'$ with $e'\in\Symd(\s')$.
 Let $e_1=e'$, $e_2$, \ldots, $e_r$ be the different  primitive idempotents obtained from $e'$ via the Galois action.
Let $e=e_1+\cdots+e_r$. Note that $e$ is fixed under the Galois action.
Since $L$ is the fixed field of the Galois action on $L'$, we conclude that $e$ is a nonzero idempotent of $L$.
As   $e'\in\Symd(\sigma')$ we have that $e_1,\dots,e_r\in\Symd(\sigma')$ and thus $e\in\Symd(\sigma')\cap A=\Symd(\sigma)$. This shows that $L$ contains a
  nonzero idempotent in $\Symd(\sigma)$. Hence $L$ is not neat in $(A,\s)$.
\end{proof}

We next show that any \'etale subalgebra contained in $\Symd(\s)$ and of maximal degree under this condition is neat.

\begin{prop}
  \label{prop:maxneat}
Let
  $L$ be an \'etale $F$-subalgebra of $A$ with $L\subseteq
  \Symm(\sigma)$ and such that $[L:F]=\kap(A,\sigma)$. Then $L$ is neat in
  $(A,\sigma)$. 
\end{prop}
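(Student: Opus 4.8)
The plan is to verify the two defining conditions of neatness for such a maximal \'etale subalgebra $L$: first that $A$ is free as a left $L$-module, and second that for every nonzero idempotent $e$ of $L$ the algebra with involution $(eAe,\s|_{eAe})$ has the same type as $(A,\s)$. By Theorem~\ref{thm:capmaxdim} we already know $L\subseteq\Syms(\s)$, which is the crucial input. As usual, many of the statements to be checked are insensitive to scalar extension, so I would reduce to a convenient situation where possible.

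For freeness of $A$ as a left $L$-module, I would split according to whether $Z(A)$ is a field. When $(A,\s)\simeq(A_0\times A_0^\op,\sw)$ is unitary of inner type, Proposition~\ref{P:inner-unitary-neat} reduces the question to whether $A_0$ is free as a left $L_0$-module, where $L_0\subseteq A_0$ has $[L_0:F]=\deg A_0=\kap(A,\s)$; since $[L_0:F]=\deg A_0$, Proposition~\ref{prop:neat} (the case $[L:F]=\deg A$) gives freeness immediately. When $Z(A)$ is a field, $A$ is central simple over $K=Z(A)$, and $L$ is $F$-linearly disjoint from $K$ (it lies in $\Symm(\s)$, while $\s$ acts nontrivially on $K$ when $\s$ is unitary, and $K=F$ when $\s$ is of the first kind); if $\s$ is orthogonal or unitary then $[L:F]=\deg A$ and Proposition~\ref{prop:neat} again applies. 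The only genuinely new case is the symplectic one, where $[L:F]=\kap(A,\s)=\tfrac12\deg A<\deg A$; here I would invoke condition~$(b)$ of Proposition~\ref{prop:neat}, namely that all simple components of $C_A(L)$ have the same degree. Writing $e_1,\dots,e_r$ for the primitive idempotents of $L$ with $\ell_i=[L_ie_i:F]$ and $d_i=\deg C_{e_iAe_i}(L_i)$, Proposition~\ref{prop:keepstype} tells us that $\s$ restricts to a symplectic involution on each $e_iAe_i$ (since $e_i\in\Syms(\s)=\Symd(\s)$ in the symplectic case), so each $e_iAe_i$ has even degree $\ell_id_i$; combined with $\sum\ell_i=\tfrac12\deg A=\tfrac12\sum\ell_id_i$, i.e.\ $\sum\ell_i(d_i-2)=0$ with each $d_i\geq2$, we force $d_1=\dots=d_r=2$, giving condition~$(b)$ and hence freeness.

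For the condition on idempotents, by Proposition~\ref{prop:keepstype} there is nothing to prove unless $\car F=2$ and $\s$ is orthogonal, and in that case Proposition~\ref{P:idempotsymd} reduces the requirement to: $\Symd(\s)$ contains no nonzero (equivalently, no primitive) idempotent of $L$. Suppose for contradiction that some primitive idempotent $e=e_1$ of $L$ lies in $\Symd(\s)$. Then by Proposition~\ref{prop:keepstype}, $\s$ restricts to a \emph{symplectic} involution $\s_1$ on $A_1=e_1Ae_1$, so $\kap(A_1,\s_1)=\tfrac12\deg A_1$, whereas on each of the other simple blocks $A_i=e_iAe_i$ the restriction $\s_i$ is orthogonal (or symplectic) with $\kap(A_i,\s_i)\le\deg A_i$. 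Counting dimensions of the \'etale algebra: $L=\bigoplus Fe_i$ has $[L:F]=r$, while $\deg A=\sum\deg A_i$, and $\kap(A,\s)=\deg A$ since $\s$ is orthogonal; so $r=[L:F]=\kap(A,\s)=\sum_{i=1}^r\deg A_i\ge 2+\sum_{i=2}^r\deg A_i\ge 2+(r-1)=r+1$, the first inequality because $\deg A_1\ge 2$ for a symplectic involution to exist on $A_1$. This contradiction shows no primitive idempotent of $L$ lies in $\Symd(\s)$, completing the verification.

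The main obstacle, and the part requiring the most care, is the symplectic case of freeness: one must correctly identify which equivalent condition of Proposition~\ref{prop:neat} to check (condition~$(b)$ rather than $(a)$, since $[L:F]\ne\deg A$ here), and then exploit Proposition~\ref{prop:keepstype} to see that each local block $e_iAe_i$ inherits a \emph{symplectic} involution precisely because $e_i\in\Symd(\s)$, forcing all the degrees $d_i$ down to the common value $2$. The idempotent condition in characteristic two is structurally similar — both arguments hinge on the dimension count $\sum\ell_id_i=\deg A$ against $\sum\ell_i=[L:F]=\kap(A,\s)$ — so once the symplectic freeness argument is set up cleanly, the orthogonal characteristic-two argument should fall out by the same bookkeeping.
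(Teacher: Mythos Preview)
Your strategy is the same as the paper's, but the paper first invokes Proposition~\ref{prop:neat-ext} to extend scalars and reduce to the case where $L$ is split (so every $\ell_i=1$). That single reduction cleanly handles two places where your argument, as written, has gaps.

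First, in the symplectic freeness step you deduce that each $\ell_id_i=\deg(e_iAe_i)$ is even from $\sigma_i$ being symplectic on $e_iAe_i$, and then assert ``each $d_i\geq 2$''. But $\ell_id_i$ even does not give $d_i\geq 2$ (take $\ell_i=2$, $d_i=1$). What you actually need is that $\sigma$ restricts to a symplectic involution on $C_{A_i}(L_i)$, which has degree $d_i$; this follows from Proposition~\ref{P:inv-sym-restrict-type} applied to the separable field extension $L_i/F$ inside $(A_i,\sigma_i)$, or equivalently from Theorem~\ref{thm:capmaxdim} applied to $L_i\subseteq\Symm(\sigma_i)$, giving $\ell_i\leq\kap(A_i,\sigma_i)=\tfrac12\ell_id_i$.

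Second, in the $\car F=2$ orthogonal case you write $L=\bigoplus Fe_i$ with $[L:F]=r$, which presumes $L$ split. With general $\ell_i$ the count must read: if $e_1\in\Symd(\sigma)$ then $\sigma_1$ is symplectic, so (by the same reasoning as above) $\deg A_1\geq 2\ell_1$, while $\deg A_i\geq\ell_i$ for $i\geq 2$; hence $\sum_i\ell_i=\kap(A,\sigma)=\deg A=\sum_i\deg A_i\geq 2\ell_1+\sum_{i\geq2}\ell_i$, giving $\ell_1\leq 0$.

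Both points evaporate once you extend scalars to make $L$ split, which is exactly what the paper does at the outset.
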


\begin{proof}
By Proposition~\ref{prop:neat-ext}, in order to show that $L$ is neat we may extend scalars, hence we may assume
    that $L$ is split.
In the case where $(A,\s)$ is
    unitary of inner type, the statement    
     readily follows from
    Proposition~\ref{P:inner-unitary-neat} and 
    Proposition~\ref{prop:neat}. We may thus
    assume that $A$ is simple.

  Let $r=[L:F]$ and let $e_1$, \ldots, $e_r$ be the primitive idempotents of $L$.
Then $L=Fe_1\oplus\cdots\oplus Fe_r$. 
For $i=1$, \dots, $r$,
    set $A_i=e_iAe_i$, $d_i=\deg A_i$ and 
  $\sigma_i=\sigma\rvert_{A_i}$, whereby $(A_i,\sigma_i)$ is an
  $F$-algebra with involution with $Z(A_i)=e_iZ(A)e_i$ and
  $e_i\in \Syms(\sigma_i)$, according to
  Theorem~\ref{thm:capmaxdim}. 
By the hypothesis, we have that
  $$r= \kap(A,\sigma)=\frac1d\sum_{i=1}^rd_i$$
where $d=2$ if $\s$ is symplectic and $d=1$ otherwise.
  In the case where $\s$ is symplectic, we obtain for $i=1$, \dots,
  $r$ that $\s_i$ is symplectic and $e_i\in \Symd(\s_i)$, so that
  $\s_i$ restricts to a symplectic involution on $A_i$, whereby $d_i$ is even. 
  In any case we conclude from the above equality that
  $d_1=\dots=d_r=d$. Hence $A$ is free as a left $L$-module.

If $\car F\neq 2$ or if $\s$ is symplectic or unitary, then it follows by Proposition~\ref{P:neat-equiv} that $L$ is neat in $(A,\s)$.
Suppose that $\car F=2$ and that $\s$ is orthogonal.
Then $d_1=\dots=d_r=d=1$. It follows for $i=1,\dots,r$ that $\s_i$ is orthogonal and therefore $e_i\notin\Symd(\s_i)$.
Hence $e_1,\dots,e_r\notin\Symd(\s)$ and
we conclude by Proposition~\ref{P:idempotsymd} that $L$ is neat in $(A,\s)$.
\end{proof}

If $\car F=2$ and $\s$ is orthogonal then there may exist \'etale
$F$-subalgebras $L$ of $A$ that are maximal in $\Syms(\s)$ and with
$[L:F]<\kap(A,\s)$, as the following example illustrates. 
In any other case one can actually show that \'etale $F$-subalgebras
of $A$ contained in $\Syms(\s)$ and maximal for these properties are
of degree equal to $\kap(A,\s)$. 

\begin{ex}\label{ex:notneat}
Let $\car F=2$.
We enhance Example~\ref{ex:orthrestricttosymp}, where in the $F$-algebra $A=\matr{4}(F)$ we considered two matrices $m$ and $e$ and the orthogonal involution $\sigma=\Int(m)\circ t$ whose restriction to $eAe$ is symplectic.
Set 
 \[ e_1=  \begin{pmatrix}
     1&0&0&0\\
     0&0&0&0\\
     0&0&0&0\\
     0&0&0&0
   \end{pmatrix},\qquad
  e_2= \begin{pmatrix}
     0&0&0&0\\
     0&1&0&0\\
     0&0&0&0\\
     0&0&0&0
   \end{pmatrix}\,\,\in A.
  \]
 The $F$-subalgebra $L=Fe_1\oplus F e_2\oplus Fe$ of $A$ is split \'etale and maximal with respect to inclusion among
  the \'etale subalgebras of $\Symm(\s)=\Syms(\sigma)$, and yet we have $[L:F]=3<4=\kap(A,\sigma)$. The $F$-subalgebra $L$ is not neat in
  $(A,\sigma)$ because the restriction of $\sigma$ to $eAe$ is
  symplectic, but also because the conditions of
  Proposition~\ref{prop:neat} do not hold, since $[L:F]$ does
  not divide $[\matr{4}(F):F]$. 
Moreover, the $F$-algebra $L'=Fe_1\oplus F(e_2+e)$ is split \'etale and contained in $\Syms(\sigma)$, but even though
  the restrictions of $\sigma$ to $e_1Ae_1$ and $(e_2+e)A(e_2+e)$
  are orthogonal, $L'$ is not neat in $(A,\sigma)$ because
  the simple components of its centraliser do not have the same
  dimension. 
\end{ex}

Turning back to the situation where $(A,\s)$ is an arbitrary $F$-algebra with involution, our next goal is to characterize neat subalgebras as
  subalgebras of symmetric \'etale algebras of
  dimension~$\kap(A,\s)$.

\begin{lem}\label{lem:subneat}
Let $L$ be a neat $F$-subalgebra of $(A,\s)$ and let $K$ be an $F$-subalgebra of $L$.
If $L$ is free as a $K$-module, then $K$ is neat in $(A,\s)$.
\end{lem}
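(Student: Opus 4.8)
The plan is to check that $K$ satisfies the four requirements in the definition of a \emph{neat subalgebra}: that $K$ is étale, that $K\subseteq\Symm(\s)$, that $A$ is free as a left $K$-module, and that for every nonzero idempotent $e$ of $K$ the $F$-algebra with involution $(eAe,\s|_{eAe})$ has the same type as $(A,\s)$. Three of these four conditions will be inherited from $L$ with essentially no work, so the only genuine point is to verify that a subalgebra of an étale $F$-algebra is again étale.

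For that I would extend scalars to an algebraic closure $\overline F$ of $F$. Tensoring the inclusion $K\hookrightarrow L$ with the free $F$-module $\overline F$ gives an embedding $K\otimes_F\overline F\hookrightarrow L\otimes_F\overline F$. Since $L$ is étale, $L\otimes_F\overline F\simeq\overline F^{[L:F]}$ is reduced, hence so is its subalgebra $K\otimes_F\overline F$; being a finite-dimensional reduced commutative algebra over the algebraically closed field $\overline F$, the latter is isomorphic to $\overline F^{\dim_FK}$, which is exactly the assertion that $K$ is an étale $F$-algebra.

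The remaining conditions are immediate. From $K\subseteq L\subseteq\Symm(\s)$ we obtain $K\subseteq\Symm(\s)$. Since $A$ is free as a left $L$-module (because $L$ is neat in $(A,\s)$) and $L$ is free as a $K$-module by hypothesis, it follows that $A$ is free as a left $K$-module. Finally, any nonzero idempotent $e$ of $K$ is in particular a nonzero idempotent of $L$, so the neatness of $L$ yields that $(eAe,\s|_{eAe})$ has the same type as $(A,\s)$, which settles the idempotent condition. Hence $K$ is neat in $(A,\s)$. The only place where the hypotheses genuinely enter is the step deducing freeness of $A$ over $K$ from freeness over $L$, which needs precisely that $L$ be free over $K$; so beyond the routine étaleness argument I expect no real obstacle.
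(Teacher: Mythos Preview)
Your proof is correct and follows the same approach as the paper's own (very terse) proof, which simply notes freeness of $A$ over $K$ via transitivity and observes that every idempotent of $K$ lies in $L$. The only difference is that you spell out explicitly why $K$ is \'etale, a point the paper leaves implicit.
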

\begin{proof}
If $L$ is free as a $K$-module, then using that $A$ is free as a left
$L$-module we obtain that $A$ is free as a left $K$-module and
conclude that $K$ is neat in $(A,\s)$ since all the idempotents in $K$ are in $L$.
\end{proof}

\begin{thm}\label{thm:neat-embed-maxetsym}
Let $K$ be a commutative $F$-subalgebra of $(A,\s)$. 
Then $K$ is neat in $(A,\s)$ if and only if $K\subseteq
L\subseteq \Symm(\s)$ for some \'etale $F$-subalgebra $L$ of $A$ with
$[L:F]=\kap(A,\s)$ and such that $L$ is free as a $K$-module. 
Moreover, if $K$ and $(A,\s)$ are split, then one can choose $L$ to be split.
\end{thm}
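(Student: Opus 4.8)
The plan is to prove the two implications separately, with the reverse implication being the easy one. If $K \subseteq L \subseteq \Symm(\s)$ with $L$ an \'etale $F$-subalgebra of dimension $\kap(A,\s)$ and $L$ free as a $K$-module, then $L$ is neat in $(A,\s)$ by Proposition~\ref{prop:maxneat}, and hence $K$ is neat by Lemma~\ref{lem:subneat}. So the substance is the forward direction: given a neat $K$, one must enlarge it to a neat \'etale algebra of full dimension $\kap(A,\s)$ inside which $K$ sits freely.

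For the forward direction, the first step is to pass to the centraliser. Write the primitive idempotents of $K$ as $e_1,\dots,e_r$, set $A_i = e_iAe_i$, $K_i = e_iK$ (so each $K_i$ is a separable field extension of $F$ by neatness), and $\s_i = \s|_{A_i}$. By the definition of neatness each $(A_i,\s_i)$ has the same type as $(A,\s)$, and because $K_i \subseteq \Symm(\s_i)$ is a separable field extension, Proposition~\ref{P:inv-sym-restrict-type} applies to $C_i := C_{A_i}(K_i)$: it gives an algebra with involution $(C_i,\s|_{C_i})$ of the same type as $(A_i,\s_i)$ with $\kap(A_i,\s_i) = [K_i:F]\cdot \kap(C_i,\s|_{C_i})$. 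Summing over $i$ and using that $A$ is free as a left $K$-module (so $\deg A_i = \deg A / [K_i:F]$ is independent of $i$ in the appropriate sense; this is where freeness enters) one gets $\kap(A,\s) = [K:F]\cdot \kap(C_i,\s|_{C_i})$ consistently. The idea is then to produce, inside each $C_i$, an \'etale $F$-subalgebra $M_i \subseteq \Symm(\s|_{C_i})$ with $[M_i : K_i] = \kap(C_i,\s|_{C_i})$: this is exactly Theorem~\ref{thm:capmaxdim} applied to the $K_i$-algebra with involution $(C_i,\s|_{C_i})$, noting that a maximal symmetric \'etale subalgebra there has dimension over $K_i$ equal to the capacity. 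Then $L := \bigoplus_i M_i$ (thinking of $M_i$ as $e_i M_i \subseteq e_i A e_i$) is an \'etale $F$-subalgebra of $A$ contained in $\Symm(\s)$, it contains $K = \bigoplus_i K_i e_i$, it is free as a $K$-module because each $M_i$ is free (even a field extension, or at worst a product of such) over $K_i$, and $[L:F] = \sum_i [M_i:F] = \sum_i [K_i:F]\kap(C_i,\s|_{C_i}) = \kap(A,\s)$.

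The main obstacle I expect is the bookkeeping around the idempotent decomposition together with the type-preservation: one must check carefully that $M_i$ really lands in $\Symm(\s)$ (not just $\Symm(\s_i)$ — but $\Symm(\s_i) = e_iAe_i \cap \Symm(\s)$, so this is automatic) and, more delicately in the orthogonal characteristic-two case, that the idempotents of the enlarged $L$ still satisfy the neatness condition — but here one does not need to re-verify neatness of $L$ directly, since once $[L:F] = \kap(A,\s)$ Proposition~\ref{prop:maxneat} hands it to us. A second point requiring care is the application of Theorem~\ref{thm:capmaxdim} over the base $K_i$ rather than $F$: the theorem is stated for an algebra with involution over its fixed field, and $(C_i,\s|_{C_i})$ is exactly such an object over $K_i$, so this is legitimate, but it should be invoked explicitly. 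Finally, for the "split" addendum: if $K$ and $(A,\s)$ are split then each $K_i = F$, each $A_i$ is a split central simple $F$-algebra, each $(C_i,\s|_{C_i})$ is split, and Proposition~\ref{lem:capmaxdimsplit} (rather than the general Theorem~\ref{thm:capmaxdim}) produces a \emph{split} $M_i \simeq F^{\kap(C_i,\s|_{C_i})}$ inside $\Syms(\s|_{C_i}) \subseteq \Symm(\s)$, so the resulting $L \simeq F^{\kap(A,\s)}$ is split, as required.
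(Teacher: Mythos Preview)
Your proof is correct and follows essentially the same route as the paper's: decompose via the primitive idempotents of $K$, apply Theorem~\ref{thm:capmaxdim} (or Proposition~\ref{lem:capmaxdimsplit} in the split case) inside each centraliser $(C_{A_i}(K_i),\s|_{C_{A_i}(K_i)})$ to produce $M_i$ of the right size, and reassemble. One small point to tighten: the reason $L=\bigoplus_i M_i$ is free over $K=\bigoplus_i K_i$ is not merely that each $M_i$ is free over the field $K_i$ (which is automatic), but that the ranks $[M_i:K_i]$ all coincide with the common capacity $c$---a fact you have already extracted from the freeness of $A$ over $K$, so you need only cite it at that step.
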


\begin{proof}
Any \'etale $F$-subalgebra $L$ of $A$ with $L\subseteq \Symm(\s)$ and
$[L:F]=\kap(A,\s)$  is neat in $(A,\s)$,  by
Proposition~\ref{prop:maxneat}, and if $K$ is contained in such an
$F$-algebra $L$ which further is free as a $K$-module, then it follows by
Lemma~\ref{lem:subneat} that $K$ is neat in $(A,\s)$. 

Assume now that $K$ is neat in $(A,\s)$.
Let $e_1$, \ldots, $e_r$ be the primitive idempotents of $K$.
For $i=1$, \dots, $r$, set $K_i=e_i K$, $A_i=e_iAe_i$ and
$\sigma_i=\sigma\rvert_{A_i}$. 
Since $K$ is neat in $(A,\s)$, we have that $(A_i,\sigma_i)$ is an
$F$-algebra with involution of the same type as $(A,\s)$ and with
$Z(A_i)=e_iZ(A)e_i$. Moreover, since $A$ is free as a left
  $K$-module, all simple components $C_{A_i}(K_i)$ of $C_A(K)$
  have the same degree, by Proposition~\ref{P:neat-equiv} if $\car F\neq 2$ or $\s$ is symplectic or unitary and otherwise by Proposition~\ref{prop:neat}. 
  For all $i=1,\dots,r$ the $K_i$-algebras with involution $(C_{A_i}(K_i),\sigma\rvert_{C_{A_i}(K_i)})$  have the same capacity, which we denote by~$c$. Fix $i\in\{1,\dots,r\}$. By Theorem~\ref{thm:capmaxdim}
  there exists an \'etale $K_i$-subalgebra $L_i$ of $C_{A_i}(K_i)$
  such that $L_i\subseteq\Symm(\s\rvert_{C_{A_i}(K_i)})$ and $[L_i:K_i]=c$; moreover,
if $K$ and $(A,\s)$ are
  split, then $C_{A_i}(K_i)$ is split and $K_i\simeq F$, and we can choose $L_i$ to be a split \'etale $F$-algebra, by
  Proposition~\ref{lem:capmaxdimsplit}. 
By Proposition~\ref{P:inv-sym-restrict-type} we have $\kap(A_i,\sigma_i)=c\cdot [K_i:F]=[L_i:F]$.
Having this for $i=1$, \dots,  $r$, we obtain that $L=L_1\oplus\cdots\oplus L_r$ is an \'etale $F$-subalgebra of $(A,\s)$ contained in $\Symm(\s)$ and such that $[L:F]=\sum_{i=1}^r[L_i:F]=\sum_{i=1}^r\kap(A_i,\s_i)=\kap(A,\s)$. As a $K$-module $L$ is free because the dimensions $[L_i:K_i]$ are the same for all~$i$.
\end{proof}

\begin{cor}\label{C:neat-degrees}
Let $K$ be a neat $F$-subalgebra of $(A,\s)$.
Then $[K:F]$ divides $\kap(A,\s)$.
\end{cor}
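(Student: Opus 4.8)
The plan is to derive Corollary~\ref{C:neat-degrees} directly from Theorem~\ref{thm:neat-embed-maxetsym}. Since $K$ is neat in $(A,\s)$, that theorem furnishes an \'etale $F$-subalgebra $L$ of $A$ with $K\subseteq L\subseteq\Symm(\s)$, with $[L:F]=\kap(A,\s)$, and such that $L$ is free as a $K$-module. The freeness of $L$ over $K$ immediately gives $[L:F]=[K:F]\cdot\operatorname{rk}_K(L)$, so $[K:F]$ divides $[L:F]=\kap(A,\s)$, which is exactly the assertion.

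The only point that needs a word of justification is that a finitely generated free module over the commutative (\'etale, hence in particular semisimple) $F$-algebra $K$ has a well-defined rank, so that freeness really does yield the multiplicativity $[L:F]=[K:F]\cdot\operatorname{rk}_K(L)$. This is elementary: writing $K$ as a product of fields $K=K_1\times\cdots\times K_r$, a free $K$-module of rank $n$ decomposes as $\prod_i K_i^{\,n}$, so its $F$-dimension is $n\sum_i[K_i:F]=n\,[K:F]$; comparing with $\dim_F L$ gives $n=[L:F]/[K:F]\in\nat$. Alternatively one may simply invoke that $L\cong K^{\,n}$ as $K$-modules and take $F$-dimensions.

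There is essentially no obstacle here: the substance of the corollary is entirely contained in Theorem~\ref{thm:neat-embed-maxetsym}, and what remains is the trivial observation that rank is multiplicative in towers of free modules over a commutative base. The proof is therefore a two-line affair: invoke the theorem to embed $K$ into a maximal neat \'etale $L$ over which it is free, then take $F$-dimensions.

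\begin{proof}
By Theorem~\ref{thm:neat-embed-maxetsym} there exists an \'etale $F$-subalgebra $L$ of $A$ with $K\subseteq L\subseteq\Symm(\s)$, with $[L:F]=\kap(A,\s)$, and such that $L$ is free as a $K$-module. Hence $L\simeq K^n$ as $K$-modules for some $n\in\nat$, and taking $F$-dimensions gives $\kap(A,\s)=[L:F]=n\cdot[K:F]$. Therefore $[K:F]$ divides $\kap(A,\s)$.
\end{proof}
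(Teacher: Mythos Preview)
Your proof is correct and follows exactly the paper's approach: the paper simply says ``This is obvious from Theorem~\ref{thm:neat-embed-maxetsym},'' and your argument spells out precisely that obvious step.
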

\begin{proof}
This is obvious from Theorem~\ref{thm:neat-embed-maxetsym}.
\end{proof}

The following proposition shows how to construct split
  \'etale subalgebras in an $F$-algebra with involution represented as
  the endomorphism algebra of a hermitian or skew-hermitian space.
  We refer to \cite[\S4]{BOI} for the terminology and basic facts on hermitian forms.

\begin{prop}\label{prop:neat-hermitian}
Let $D$ be a finite-dimensional division $F$-algebra, $V$ a finite-dimensional right $D$-vector space and $A=\End_DV$. 
Let $\tau$ be an $F$-involution on $D$ for which $(D,\tau)$ is an $F$-algebra with involution. 
Let  $h:V\times V\to D$ be a hermitian or skew-hermitian form with respect to $\tau$ and let $\s$ be the $F$-involution on $A=\End_DV$ adjoint to $h$.
Let $V_1,\dots,V_r$ be $D$-subspaces of $V$
such that $$V=V_1\oplus \dots \oplus V_r\,.$$
Let $e_1,\dots,e_r \in A$ denote the projections corresponding to this decomposition and $L=Fe_1\oplus\dots\oplus Fe_r$.
Then the following hold:
\begin{enumerate}[$(i)$]
\item $L$ is a split $F$-\'etale subalgebra of $A$ with $[L:F]=r$.
\item $(A,\s)$ is an $F$-algebra with involution of the same kind as $(D,\tau)$.
\item  $L\subseteq \Symm(\s)$ if and only if
the decomposition $V=V_1\oplus \dots \oplus V_r$ is orthogonal with respect to $h$.
\item If $L\subseteq \Symm(\s)$, then $L$ is neat in $(A,\s)$ if and only if for $i=1,\dots,r$ we have $\dim_DV_i=\frac1r\dim_DV$ and $h$ restricts to a non-alternating form on $V_i$ in the case where $h$ is non-alternating.
\end{enumerate}
 \end{prop}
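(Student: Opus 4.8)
The plan is to establish the four assertions in turn, reducing each time to the elementary behaviour of adjoint involutions together with the results of Sections~\ref{S:cap} and~\ref{S:neat}. Assertion $(i)$ is immediate: the projections $e_1,\dots,e_r$ are nonzero pairwise orthogonal idempotents summing to $1$, so $L\simeq F^r$ is split \'etale with $[L:F]=r$. For $(ii)$, I would use that $D$ is a division algebra, so $Z(D)$ is a field and $A=\End_DV$ is central simple over $Z(D)$, whence $Z(A)=Z(D)$; the adjoint involution $\s$ restricts to $\tau$ on $Z(D)$ by \cite[\S4]{BOI}, so $Z(A)\cap\Symm(\s)=Z(D)\cap\Symm(\tau)=F$ and $A$, being simple, has no proper $\s$-stable two-sided ideal. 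Hence $(A,\s)$ is an $F$-algebra with involution whose kind, measured by $[Z(A):F]=[Z(D):F]$, is that of $(D,\tau)$. For $(iii)$, since $e_1,\dots,e_r$ form an $F$-basis of $L$ it is enough to decide when every $e_i$ lies in $\Symm(\s)$; writing out the defining property $h(f(x),y)=h(x,\s(f)(y))$ of the adjoint involution and evaluating it on $x\in V_j$, $y\in V_k$ shows that all the $e_i$ are symmetric if and only if $h(V_j,V_k)=0$ for $j\neq k$, that is, the decomposition is orthogonal for $h$.

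The substance is $(iv)$, where I assume the decomposition orthogonal, so that $L\subseteq\Symm(\s)$. Neatness of $L$ then amounts to two further conditions: that $A$ be free as a left $L$-module, and that $(eAe,\s|_{eAe})$ have the same type as $(A,\s)$ for every nonzero idempotent $e$ of $L$. For the first, a left module over $L\simeq F^r$ is free exactly when its components $e_1M,\dots,e_rM$ have equal $F$-dimension; taking $M=A$ and identifying $e_iA$ with $\Hom_D(V,V_i)$, of $F$-dimension $(\dim_DV)(\dim_DV_i)(\dim_FD)$, this reads $\dim_DV_1=\dots=\dim_DV_r$, equivalently $\dim_DV_i=\frac1r\dim_DV$ for every $i$. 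For the second, orthogonality of the decomposition identifies $e_iAe_i$ with $\End_DV_i$ and $\s|_{e_iAe_i}$ with the involution adjoint to $h|_{V_i}$ (and likewise $eAe\simeq\End_D(\bigoplus_{i\in S}V_i)$ for $e=\sum_{i\in S}e_i$). By Proposition~\ref{prop:keepstype} the type is automatically preserved unless $\car F=2$ and $\s$ is orthogonal; in that case Proposition~\ref{P:idempotsymd} reduces the condition to the primitive idempotents, so it holds if and only if no $e_i$ lies in $\Symd(\s)$, and $e_i\in\Symd(\s)$ is equivalent to $\s|_{e_iAe_i}$ being symplectic, hence, by \cite[\S4]{BOI}, to $h|_{V_i}$ being alternating. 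Since here $\s$ orthogonal forces $h$ to be non-alternating, this is precisely the requirement that each $h|_{V_i}$ be non-alternating when $h$ is non-alternating. In every other case the type condition is vacuous, and then so is the corresponding clause in the statement: if $\s$ is symplectic then $h$ is alternating, while if $\s$ is unitary each $h|_{V_i}$ is a nonzero nondegenerate form over an involution of the second kind and hence non-alternating. Assembling the two conditions gives the characterisation claimed.

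The main obstacle is the characteristic-two bookkeeping in $(iv)$: one must correctly translate ``the restricted involution $\s|_{e_iAe_i}$ keeps its type'' into ``$h|_{V_i}$ is non-alternating'' by means of Propositions~\ref{prop:keepstype} and~\ref{P:idempotsymd} and the adjoint-involution dictionary of \cite[\S4]{BOI}, and then verify that this clause becomes vacuous exactly in the cases ($\s$ symplectic or unitary, or $\car F\neq2$) where no extra requirement beyond the dimension equality is needed. The supporting facts — that centralisers and restrictions of $\s$ are endomorphism algebras with adjoint involutions of restricted forms, and the dimension count for $e_iA$ — are routine.
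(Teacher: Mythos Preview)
Your proof is correct and follows essentially the same route as the paper's: parts $(i)$--$(iii)$ are handled identically, and for $(iv)$ you check freeness directly via the dimensions of the $e_iA\simeq\Hom_D(V,V_i)$ and then treat the type condition using Propositions~\ref{prop:keepstype} and~\ref{P:idempotsymd}, whereas the paper phrases the freeness condition equivalently as ``all simple components of $C_A(L)=\bigoplus_i\End_DV_i$ have the same degree'' (cf.\ Proposition~\ref{prop:neat}). Your version is somewhat more explicit in verifying that the non-alternating clause is vacuous in the unitary, symplectic, and $\car F\neq 2$ cases, which the paper leaves implicit.
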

\begin{proof}
Part $(i)$ is clear and Part $(ii)$ is \cite[(4.2)]{BOI}.
By the definition of $L$ we have $L\subseteq\Symm(\sigma)$ if and only if 
 \[
  h\bigl(x,e_i(y)\bigr)=h\bigl(e_i(x),e_i(y)\bigr) = h(e_i(x),y)
  \]
holds for all $i=1, \ldots,r$ and $x,y\in V$, which is if and only if the decomposition of $V$ is orthogonal with respect to $h$. This shows $(iii)$.

We have 
  \[
  C_A(L)=e_1Ae_1\oplus\cdots\oplus e_rAe_r = (\End_DV_1)
  \oplus\cdots\oplus (\End_DV_r)
  \] 
and  $\dim_DV_i=\deg \End_D V_i$  for  $i=1,\dots,r$. 
Hence, all simple components of $C_A(L)$ have the same degree if and only if $\dim_DV_i=\frac1r\dim_DV$ for $i=1,\dots,r$. 
Assuming that $L\subseteq \Symm(\s)$, it is clear that for $i=1,\dots,r$ the involution $\s$ restricts to an involution of the same type on $e_iAe_i$ except possibly if $\s$ is orthogonal and $\car F=2$, and in that case the condition holds if and only if the restriction of $h$ on $V_i$ is non-alternating. This shows $(iv)$.
\end{proof}

\begin{cor}
  \label{cor:existsplitneat}
Let $r$ be a positive integer.
There exists a split neat $F$-subalgebra of $(A,\s)$ of degree $r$ if and only if $r$ divides $\coind A$ and $
\kap(A,\s)$.
\end{cor}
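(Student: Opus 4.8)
The plan is to prove the statement by combining the structure of split neat subalgebras established in Theorem~\ref{thm:neat-embed-maxetsym} with the numerical constraints coming from Proposition~\ref{prop:neat} and Corollary~\ref{C:neat-degrees}. First I would prove the necessity: suppose $L$ is a split neat $F$-subalgebra of $(A,\s)$ with $[L:F]=r$. By Corollary~\ref{C:neat-degrees} we already know $r\mid\kap(A,\s)$, so it remains to see that $r\mid\coind A$. Since $L$ is neat, $A$ is free as a left $L$-module, and $L\simeq F^r$ means the $r$ primitive idempotents $e_1,\dots,e_r$ of $L$ each satisfy $\dim_F e_iA=\dim_F e_jA$ (freeness over the split algebra $F^r$ forces equal rank in each component). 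As noted in the proof of Proposition~\ref{prop:neat}, $\dim_F e_iA=\deg(e_iAe_i)\cdot\deg A$, so all $\deg(e_iAe_i)$ are equal, say to $d$, and $\sum_{i=1}^r d=\deg A$, giving $rd=\deg A$. Now $e_iAe_i$ is Brauer equivalent to $A$, hence has index equal to $\ind A$, so $\ind A\mid d=\frac{\deg A}{r}$, which rearranges to $r\mid\frac{\deg A}{\ind A}=\coind A$. (When $Z(A)\simeq F\times F$ one argues componentwise with $A_0$, using Proposition~\ref{P:inner-unitary-neat}.)

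For the converse I would assume $r\mid\coind A$ and $r\mid\kap(A,\s)$ and construct a split neat subalgebra of degree $r$. The cleanest route is to realize $(A,\s)$ as the endomorphism algebra of a hermitian or skew-hermitian space and invoke Proposition~\ref{prop:neat-hermitian}. Write $A=\End_DV$ for $D$ the division algebra Brauer equivalent to $A$ with an involution $\tau$ of the appropriate kind, $V$ a right $D$-vector space, and $\s$ adjoint to a hermitian or skew-hermitian form $h$ on $V$ chosen so that $(A,\s)$ has the prescribed type (the existence of such $(D,\tau)$ and $h$ is standard, e.g.\ from \cite[Ch.~I]{BOI}; in the unitary inner case one takes $D=F\times F^{\op}$ or argues directly via Proposition~\ref{P:inner-unitary-neat}). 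Here $\dim_D V=\coind A$, except that in the symplectic case one has $\dim_D V=2\kap(A,\s)\cdot(\ind A)^{-1}\cdot\ind A$—more carefully, $\dim_D V=\coind A$ always, while $\kap(A,\s)$ equals $\coind A$ or $\frac12\deg A$; the condition $r\mid\kap(A,\s)$ is what one needs to split off orthogonal pieces in the characteristic-two symplectic/orthogonal bookkeeping. Since $r\mid\coind A=\dim_D V$, I can write $V=V_1\oplus\dots\oplus V_r$ as an orthogonal decomposition for $h$ with $\dim_D V_i=\frac1r\dim_D V$ for all $i$ (orthogonal decompositions into isometric pieces of any dimension dividing $\dim_D V$ exist once one knows the form admits an orthogonal basis refinement, which holds for non-degenerate hermitian and skew-hermitian forms over division algebras with involution). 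By Proposition~\ref{prop:neat-hermitian}(i)--(iii), $L=Fe_1\oplus\dots\oplus Fe_r$ with the $e_i$ the projections is a split \'etale subalgebra of degree $r$ contained in $\Symm(\s)$. By part (iv), $L$ is neat as soon as each $V_i$ has dimension $\frac1r\dim_D V$ (arranged) and, in the non-alternating case, $h|_{V_i}$ is non-alternating: this last point is where $r\mid\kap(A,\s)$ enters, since it guarantees that the non-alternating form $h$ can be split into $r$ isometric non-alternating summands of the right dimension rather than being forced to produce an alternating piece.

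The main obstacle I expect is this last refinement in characteristic two: when $\s$ is orthogonal (so $h$ is symmetric non-alternating) one must produce an orthogonal decomposition into $r$ isometric pieces \emph{each of which is itself non-alternating}, and the constraint that makes this possible is precisely $r\mid\kap(A,\s)=\deg A$ together with $r\mid\coind A$; in the symplectic/orthogonal interplay of Proposition~\ref{prop:keepstype} a naive even-dimensional split could collapse the type. I would handle this by first peeling off a non-alternating rank-one (over the form's natural unit) summand from $h$ and distributing the "defect" evenly among the $V_i$, using that $\dim_D V/r$ is a positive integer and that a non-alternating form of any dimension $\geq 1$ over $(D,\tau)$ contains an anisotropic vector on which it is non-alternating. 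The remaining verifications—that $[L:F]=r$, that $L\subseteq\Symm(\s)$, and freeness of $A$ over $L$—are immediate from Proposition~\ref{prop:neat-hermitian}. Finally, to match the hypothesis $r\mid\coind A$ with the $\dim_D V$ appearing in the symplectic case, I note $\coind A=\dim_D V$ in all cases and that the symplectic capacity $\frac12\deg A$ divides $\coind A\cdot\ind A/\ind A$ appropriately, so no incompatibility arises; alternatively, one reduces the symplectic case to Proposition~\ref{P:split-case-embeddings} and the orthogonal/unitary cases by restriction.
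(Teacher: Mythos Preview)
Your approach is essentially the paper's: both establish necessity via Corollary~\ref{C:neat-degrees} and the Brauer-equivalence argument, and both construct the split neat subalgebra by realising $(A,\sigma)$ as the adjoint algebra of a hermitian form and invoking Proposition~\ref{prop:neat-hermitian}. The paper adds one organisational step you omit: it first produces a split neat subalgebra of the \emph{maximal} possible degree~$d$ (where $d=\kap(A,\sigma)$ if $(A,\sigma)$ is split symplectic and $d=\coind A$ otherwise) and then passes to a degree-$r$ subalgebra via Lemma~\ref{lem:subneat}. This is cosmetic; constructing degree~$r$ directly is fine.

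There is, however, a genuine gap in your handling of the characteristic-two orthogonal case. You correctly identify the obstacle: each summand $V_i$ must carry a non-alternating restriction of $h$. But your fix (``peel off a non-alternating rank-one summand and distribute the defect evenly'') does not work as stated. A priori only \emph{one} diagonal entry of $h\simeq\langle a_1,\dots,a_d\rangle$ lies outside $\Symd(\tau)$, and there is nothing to ``distribute'' to $r$ blocks. The paper resolves this with an explicit identity: for $x\in\Symm(\tau)\setminus\Symd(\tau)$ and $a\in\Symd(\tau)\setminus\{0\}$ one has $\langle x,a\rangle\simeq\langle x+a,\,x^{-1}+a^{-1}\rangle$ with both new entries outside $\Symd(\tau)$. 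Iterating this converts the diagonalisation into one with \emph{all} entries outside $\Symd(\tau)$, after which any grouping into $r$ equal blocks works.

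A second, smaller gap: your assertion that diagonalisation ``holds for non-degenerate hermitian and skew-hermitian forms over division algebras with involution'' fails precisely when $(D,\tau)=(F,\id)$ and $h$ is alternating, i.e.\ in the split symplectic case. The paper treats the split case separately (via Proposition~\ref{lem:capmaxdimsplit} and Proposition~\ref{prop:maxneat}) before passing to the hermitian-form construction, where it can then assume $(D,\tau)$ is not split symplectic and hence that $h$ diagonalises. Your parenthetical remarks about the symplectic capacity do not address this. Finally, note that Proposition~\ref{prop:neat-hermitian}(iv) only requires the $V_i$ to have equal dimension, not to be isometric; asking for isometric pieces is both stronger than needed and not always achievable.
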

\begin{proof}
If $(A,\s)$ is split symplectic, then $\coind A=2\kap(A,\s)$, and in this case we set $d=\kap(A,\s)$.
In any other case $\coind A$ divides $\kap(A,\s)$, and we set $d=\coind A$.
In view of Corollary~\ref{C:neat-degrees} the degree of any neat $F$-subalgebra of $(A,\s)$ divides $\kap(A,\s)$.
On the other hand, the degree of a split neat $F$-subalgebra of $(A,\s)$ clearly divides $d$.

We claim that $(A,\s)$ contains a split neat $F$-subalgebra $L$ of
degree $d$. 
Once this is shown, assuming that $r$ divides $d$, we may choose an
$F$-subalgebra $K$ of $L$ with $[K:F]=r$ and such that $L$ is free as
a $K$-module, and obtain that $K$ is split and neat in
$(A,\s)$, by Lemma~\ref{lem:subneat}. 

A central simple algebra $A_0$ contains an $F$-subalgebra
$L_0$ isomorphic to $F^r$ and such that $A_0$ is free as a left
$L_0$-module if and only if $r$ divides $\coind A_0$. This together
with Proposition~\ref{P:inner-unitary-neat} 
shows the statement in the case where $(A,\s)$ is unitary of inner type.
Hence we may for the rest of the proof assume that $A$ is simple.

Assume that $(A,\s)$ is split. Then $d=\kap(A,\s)$ and it follows from Proposition~\ref{lem:capmaxdimsplit} together with Proposition~\ref{prop:maxneat} that $(A,\s)$ contains a split neat $F$-subalgebra $L$ with $[L:F]=\kap(A,\s)$.

For the rest of the proof we may in particular assume that $(A,\s)$
is not split symplectic, whereby $d=\coind A$. 
We identify $A$ with $\End_DV$ where $D$ is a non-commutative division $F$-algebra
and $V$ is a finite-dimensional right $D$-vector space. 
Then $d=\dim_DV$.

Since $(A,\s)$ is not split symplectic we may fix an $F$-involution $\tau$ on $D$ of the same type as $\s$.
Then $\s$ is adjoint to a 
hermitian form $h:V\times V\to D$ with respect to $\tau$. 
Since $(D,\tau)$ is in particular not split symplectic, we may diagonalise $h$.
In other words, we find a $D$-basis $(v_1,\dots,v_d)$ of $V$ which is orthogonal for $h$. 
Letting $a_i=h(v_i,v_i)$ for $i=1,\dots,d$,
we obtain that $a_1,\dots,a_d\in\Symm(D,\tau)$ and $$h\simeq \la a_1,\dots,a_d\ra\,.$$
Note that $h$ is alternating if and only if $\car F=2$ and $\s$ is symplectic, and in this case $a_1,\dots,a_d\in\Symd(D,\tau)$.
If $\car F=2$ and $\s$ is orthogonal, then at least one of $a_1,\dots,a_d$ is not contained in $\Symd(D,\tau)$.
However, if $\car F=2$ then for any $x\in\Symm(D,\tau)\setminus\Symd(D,\tau)$ and $a\in\Symd(D)\setminus\{0\}$
we have 
$$\la x,a\ra\simeq\la x+a,x^{-1}+a^{-1}\ra$$
and $x+a,x^{-1}+a^{-1}\in\Symm(D,\tau)\setminus\Symd(D,\tau)$.
Hence, if $\car F=2$ and $\s$ is orthogonal, then one can change the diagonalisation appropriately and assume that $a_1,\dots,a_d\in\Symm(D,\tau)\setminus\Symd(D,\tau)$.
Then the orthogonal basis $(v_1,\dots,v_d)$ yields an orthogonal decomposition of $(V,h)$ in $r$ subspaces of equal dimension, and the restriction of $h$ to any of these subspaces is only alternating in the case where $\car F=2$ and $\s$ is symplectic.
By Proposition~\ref{prop:neat-hermitian} the projections corresponding
to this decomposition generate a split neat $F$-subalgebra of $(A,\s)$ of degree~$d$. 
\end{proof}

\section{Neat quadratic subalgebras} 

\label{S:neat-quad}

Throughout this section let $(A,\sigma)$ be an $F$-algebra with
involution and let $K$ be a neat quadratic $F$-subalgebra of $(A,\sigma)$.
We shall prove that there exists a maximal neat subalgebra of $(A,\s)$ of the form $KL$ for a neat $F$-subalgebra $L$ of $(A,\s)$ which is $F$-linearly disjoint from $K$. This result will be crucial for our main results in the final section.

Set $C=C_A(K)$ and 
$$C'=C'_A(K)=\{x\in A\mid xk=\gamma(k)x\mbox{ for all }k\in K\}$$
where $\gamma$ denotes the nontrivial $F$-automorphism of $K$.
 
\begin{prop}
  \label{lem:neatquad1}
We have $A=C\oplus C'$ and
the $F$-vector spaces $C$ and $C'$ are stable under $\s$ and satisfy $$C\cap \Symd(\s)=\Symd(\s|_C)  \,\,\mbox{ and }\,\,C'\cap\Symm(\sigma) = C'\cap\Symd(\sigma)\,.$$ Moreover,
  $\dim_F C' =\dim_FC=
  {\textstyle\frac12}\dim_FA$ and $\dim_F\bigl(C'\cap\Symm(\sigma)\bigr)=\frac14\dim_FA$.
  \end{prop}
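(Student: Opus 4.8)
The plan is to produce an explicit $F$-linear projection of $A$ onto $C$ with kernel $C'$, and then to read off all the assertions from this decomposition together with the hypothesis that $K$ is neat. Since $K$ is quadratic \'etale over $F$, we may fix $\kappa\in K$ with $K=F[\kappa]$ and $\mu:=\kappa-\gamma(\kappa)\in\mg K$ (whether $K$ is a field or $K\simeq F\times F$). Writing $t=\kappa+\gamma(\kappa)\in F$ and $n=\kappa\gamma(\kappa)\in F$, and using $\kappa^2=t\kappa-n$, a direct computation shows that the $F$-linear map $p\colon A\to A$, $x\mapsto\mu^{-1}\bigl(x\kappa-\gamma(\kappa)x\bigr)$, satisfies $\kappa\,p(x)=p(x)\,\kappa$ for all $x\in A$, hence $p(A)\subseteq C_A(F[\kappa])=C$; moreover $p(x)=x$ for $x\in C$, and $p(x)=0$ if and only if $x\kappa=\gamma(\kappa)x$, that is (since $K=F[\kappa]$) if and only if $x\in C'$. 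Thus $p$ is an idempotent endomorphism with image $C$ and kernel $C'$, so $A=C\oplus C'$. Applying $\s$ to the relations $xk=kx$, respectively $xk=\gamma(k)x$, and using $\s|_K=\id$ (which holds because $K\subseteq\Symm(\s)$), one sees that $\s(C)=C$ and $\s(C')=C'$.

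For $C\cap\Symd(\s)=\Symd(\s|_C)$ the inclusion $\supseteq$ is clear from $\s(C)=C$, and for $\subseteq$ one writes $z\in C\cap\Symd(\s)$ as $z=a+\s(a)$, decomposes $a=c+c'$ along $A=C\oplus C'$, and observes that $c'+\s(c')\in C\cap C'=\{0\}$, so $z=c+\s(c)\in\Symd(\s|_C)$. The inclusion $C'\cap\Symd(\s)\subseteq C'\cap\Symm(\s)$ is automatic; conversely, as $K/F$ is separable we may choose $\lambda\in K$ with $\lambda+\gamma(\lambda)=1$, and then for $x\in C'\cap\Symm(\s)$ we have $\s(\lambda x)=\s(x)\lambda=x\lambda=\gamma(\lambda)x$, so $x=(\lambda+\gamma(\lambda))x=\lambda x+\s(\lambda x)\in\Symd(\s)$; hence $C'\cap\Symm(\s)=C'\cap\Symd(\s)$.

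It remains to compute dimensions. Since $K$ is neat, $A$ is free as a left $K$-module, so Proposition~\ref{prop:neat} --- applied to $A$ if $A$ is simple, and to the simple component $A_0$ together with Proposition~\ref{P:inner-unitary-neat} if $(A,\s)$ is unitary of inner type --- yields $[K:F]\cdot\dim_FC=\dim_FA$, i.e.\ $\dim_FC=\frac12\dim_FA$, whence $\dim_FC'=\dim_FA-\dim_FC=\frac12\dim_FA$. Put $V=C'\cap\Symm(\s)$. The key remaining point is that the $K$-linear map $\Theta\colon V\otimes_FK\to C'$, $v\otimes k\mapsto kv$, is an isomorphism: for surjectivity one checks that $v_1=x+\s(x)$ and $v_2=\kappa x+\gamma(\kappa)\s(x)$ lie in $V$ for every $x\in C'$, and inverts the relation $\bigl(\begin{smallmatrix}v_1\\v_2\end{smallmatrix}\bigr)=\bigl(\begin{smallmatrix}1&1\\\kappa&\gamma(\kappa)\end{smallmatrix}\bigr)\bigl(\begin{smallmatrix}x\\\s(x)\end{smallmatrix}\bigr)$ using $\det\bigl(\begin{smallmatrix}1&1\\\kappa&\gamma(\kappa)\end{smallmatrix}\bigr)=-\mu\in\mg K$; for injectivity one writes an element of $\ker\Theta$ as $u\otimes 1+w\otimes\kappa$ with $u,w\in V$ and $u+\kappa w=0$, applies $\s$ (which fixes $u$ and $w$ and sends $\kappa w$ to $w\kappa=\gamma(\kappa)w$) to obtain $u+\gamma(\kappa)w=0$, and concludes $\mu w=0$, hence $w=u=0$. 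Therefore $\dim_FC'=\dim_F(V\otimes_FK)=2\dim_FV$, so $\dim_FV=\frac14\dim_FA$. The step I expect to be the main obstacle is precisely this last one: in characteristic~$2$ there is no splitting $C'=\Symm(\s|_{C'})\oplus\Skew(\s|_{C'})$, so the isomorphism $\Theta$ --- a form of Galois descent along $K/F$, equivalently the surjection $1+\s\colon C'\to\Symd(\s|_{C'})=V$ whose kernel $\Skew(\s|_{C'})$ turns out to have the same dimension as $V$ --- is what makes the dimension count work uniformly.
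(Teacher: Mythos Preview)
Your proof is correct and follows the same overall strategy as the paper: build an explicit projection of $A$ onto $C$ along $C'$ from the separability of $K$, check $\sigma$-stability, verify the two displayed identities, and compute dimensions. Two implementation details differ and are worth noting. For $C\cap\Symd(\sigma)=\Symd(\sigma|_C)$, your argument via the decomposition $a=c+c'$ and the observation $c'+\sigma(c')\in C\cap C'=\{0\}$ is characteristic-free and shorter than the paper's, which reduces to $\car F=2$ and then exhibits by hand an element $y\in C$ with $z=y+\sigma(y)$. For $\dim_F V$, the paper instead considers $f\colon C'\to C'$, $x\mapsto x+\sigma(x)$, identifies its image with $V$ and its kernel with $W=C'\cap\Skew(\sigma)$, and shows that left multiplication by $1-2u$ (where $u^2-u\in F$) gives an $F$-isomorphism $V\simeq W$; your isomorphism $\Theta\colon V\otimes_FK\xrightarrow{\sim} C'$ repackages the same mechanism (indeed $1-2u=-\mu$ for the corresponding choice of generator) as a Galois-descent statement. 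Both routes give the result uniformly in all characteristics.
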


\begin{proof}
We fix $u\in K\setminus F$ with $u^2-u\in F$ and set
  $c=u^2-u$.
  Hence $\gamma(u)=1-u$ and we have $4c+1\neq0$ because the roots of the  polynomial $X^2-X-c$ are simple.
We obtain that $C'=\{x\in A\mid xu+ux=x\}$. 
Since $\s(u)=u$ we have $\s(C')=C'$.

Consider the $F$-linear map
\[ 
  \varphi\colon A\to A,\quad
  x\mapsto\textstyle{\frac1{4c+1}}\bigl((2c+1)x-ux-xu+2uxu\bigr).
\]
Computation shows that $\varphi(x)\in C$ and $x-\varphi(x)\in  C'$ for
any $x\in A$, 
and moreover $\varphi(x)=x$ for $x\in C$ and $\varphi(x)=0$ for $x\in
C'$.
Therefore $$A=C\oplus C'\,.$$ 
As $K$ is neat in $(A,\sigma)$ and $[K:F]=2$, it follows from Proposition~\ref{prop:neat} in the case where $A$ is simple and otherwise from Proposition~\ref{P:inner-unitary-neat} that $\dim_FC=\frac12\dim_FA$, whereby $\dim_F C'=\frac12\dim_FA$. 

Now, consider the $F$-linear map $f\colon C'\to C',x\mapsto x+\s(x)$ and set $W=\ker(f)$ and $U=f(C')$.
Hence $W=C'\cap \Skew(\s)$ and $\dim_FC'=\dim_FW+\dim_F U$. 
For $x\in C'\cap \Symm(\s)$ we have
$x=xu+ux=xu+\sigma(xu)\in U\subseteq C'\cap\Symd(\sigma)$, whereby $$U=C'\cap \Symm(\s)=C'\cap\Symd(\s)\,.$$
As $(1-2u)^2=1+4c\in\mg{F}$, multiplication from the left by $1-2u$ yields $F$-iso\-morphisms between $U$ and $W$. Hence $\dim_FU=\dim_FW=\frac12\dim_FC'$.

Clearly we have $\Symd(\s|_C)\subseteq C\cap\Symd(\s)$.
To show the converse inclusion, we may obviously assume that $\car F= 2$, whereby $u^2+u=c\in F$.
Consider $z\in C\cap \Symd(\s)$.
Let $x\in A$ be such that $z=\s(x)+x$.
As $u\in K\subseteq C$ we have $zu=uz$ and obtain that 
$\s(xu+ux)=(z+x)u+u(z+x)=ux+xu$
and thus 
$$z=\s(xu+ux+x)+xu+ux+x\,.$$
Since $u(ux+xu+x)=(u^2+u)x+uxu=x(u^2+u)+uxu=(ux+xu+x)u$ we further have  $ux+xu+x\in C_A(u)=C$ and conclude that $z\in \Symd(\s|_C)$.
\end{proof}
\begin{rem}
The definition of $\varphi$ in the proof comes from the  observation  that $\frac1{4c+1}(2c+1-u\otimes1-1\otimes
u + 2 u\otimes u)$ is the separability idempotent of $K$.
\end{rem}

Let us consider in more detail the case where $K$ is a
field. We then consider $C$ as a $K$-algebra and denote by $\s_C$ the $K$-involution on $C$ obtained by restricting~$\s$.

\begin{prop}\label{lem:PC3}
Assume that $K$ is a field.
  For $a\in
C'\cap \Symm(\sigma)$ we  have $$a^2\in\Syms(\sigma_C)\mbox{ and
}\chi_{A,a}(X)=\chi_{C,a^2}(X^2)\in F[X^2]\,.$$
\end{prop}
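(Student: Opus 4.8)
The plan is to reduce everything to the already-established Corollary~\ref{lem:PC2}. By Proposition~\ref{lem:neatquad1}, since $K$ is a neat quadratic subalgebra we have $\dim_FC=\frac12\dim_FA$, which is exactly the hypothesis $\dim_FC_A(K)=\frac12\dim_FA$ needed to invoke Corollary~\ref{lem:PC2}. That corollary gives, for any $a\in C'_A(K)=C'$, the identity $\Prd_{A,a}(X)=\Prd_{C,a^2}(X^2)$. So in the orthogonal and unitary cases, where $\chi_a=\Prd_{A,a}$ by definition and likewise $\chi_{C,a^2}=\Prd_{C,a^2}$ (the type of $(C,\s_C)$ equals that of $(A,\s)$ by Proposition~\ref{P:inv-sym-restrict-type}), the statement is immediate once we know $a^2\in\Syms(\s_C)$. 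For the symplectic case, where $\chi_a=\Prp_{\s,a}$, I would still first establish the reduced-characteristic-polynomial identity and then pass to Pfaffian characteristic polynomials by taking square roots of polynomials (using that $\Prd=\Prp^2$ for symplectic involutions, cf.~\cite[(2.10)]{BOI}), matching up signs via the leading coefficients.

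First I would check that $a^2\in\Syms(\s_C)$. We have $a\in C'\cap\Symm(\s)$, so $\s(a)=a$ and hence $\s(a^2)=a^2$, giving $a^2\in\Symm(\s_C)$. Moreover $a^2\in C$: indeed $a\in C'$ means $ak=\gamma(k)a$ for all $k\in K$, so $a^2k=a\gamma(k)a=\gamma(\gamma(k))a^2=ka^2$. If $\s$ is orthogonal or unitary then $\Syms=\Symm$ and we are done. If $\s$ is symplectic, then by Proposition~\ref{P:inv-sym-restrict-type} the involution $\s_C$ is symplectic as well, and I must show $a^2\in\Symd(\s_C)$; but Proposition~\ref{lem:neatquad1} already tells us $C'\cap\Symm(\s)=C'\cap\Symd(\s)$, so $a=x+\s(x)$ for some $x\in C'$, and I would compute $a^2=(x+\s(x))^2$ and exhibit it as $y+\s(y)$ with $y\in C$, or more cleanly use that $\Symd(\s)\cap C=\Symd(\s_C)$ (also from Proposition~\ref{lem:neatquad1}) together with $a^2\in\Symd(\s)$, the latter holding since $a\in\Symd(\s)$ implies $a^2=a\cdot a$ and, in characteristic~$2$, $\Symd$ is closed under the operation $z\mapsto z^2$ because $z^2=(w+\s(w))^2=w^2+\s(w)^2=w^2+\s(w^2)$ when $z=w+\s(w)$ (the cross terms cancel as $w\s(w)+\s(w)w$ is already symmetric... here one must be slightly careful, so I would instead directly write $a=w+\s(w)$ and expand $a^2 = w^2 + w\s(w)+\s(w)w+\s(w)^2$, noting $w^2+\s(w)^2=w^2+\s(w^2)\in\Symd(\s)$ and $w\s(w)+\s(w)w = w\s(w)+\s(w\s(w))\in\Symd(\s)$).

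Then I would assemble the argument: by Corollary~\ref{lem:PC2}, $\Prd_{A,a}(X)=\Prd_{C,a^2}(X^2)$. In the orthogonal/unitary case this is literally $\chi_{A,a}(X)=\chi_{C,a^2}(X^2)$. In the symplectic case, write $n=\deg A$ and recall $\Prd_{A,a}=\Prp_{\s,a}^2$ and $\Prd_{C,a^2}=\Prp_{\s_C,a^2}^2$; both $\chi_{A,a}=\Prp_{\s,a}$ and $\chi_{C,a^2}(X^2)=\Prp_{\s_C,a^2}(X^2)$ are monic polynomials whose squares agree, hence they agree up to sign, and comparing leading coefficients (both monic) forces equality. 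Finally, that $\chi_{A,a}(X)\in F[X^2]$ is visible from the right-hand side $\chi_{C,a^2}(X^2)$, whose coefficients lie in $F$ because $a^2\in\Syms(\s_C)$ makes $\chi_{C,a^2}\in F[X]$ (using the remark, recalled in the excerpt, that for symmetric elements the reduced/Pfaffian characteristic polynomial has coefficients in the fixed field~$F$, even in the unitary case). The main obstacle is the bookkeeping in the symplectic case — getting from the reduced characteristic polynomial identity of Corollary~\ref{lem:PC2} to the Pfaffian one without sign errors, and confirming $a^2\in\Symd(\s_C)$ in characteristic~$2$ — but none of this is deep; it is all reduction to the two cited earlier results.
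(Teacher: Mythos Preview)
Your proposal is correct and follows essentially the same route as the paper: reduce to Corollary~\ref{lem:PC2} for the $\Prd$ identity, then in the symplectic case pass to Pfaffian polynomials via $\Prd=\Prp^2$ and monicity. The one notable difference is in verifying $a^2\in\Symd(\s_C)$ when $\s$ is symplectic: you expand $a^2=(w+\s(w))^2$ and invoke $C\cap\Symd(\s)=\Symd(\s_C)$ from Proposition~\ref{lem:neatquad1}, whereas the paper uses a one-line trick---pick $\ell\in C$ with $\ell+\s(\ell)=1$ (possible since $1\in\Symd(\s_C)$) and observe that $a^2=a\ell a+\s(a\ell a)$ with $a\ell a\in C$ directly. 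Both arguments are valid; the paper's is shorter but yours makes the role of Proposition~\ref{lem:neatquad1} more visible. One small slip: your claim that $\chi_{C,a^2}\in F[X]$ follows from $a^2\in\Syms(\s_C)$ is not quite right, since $(C,\s_C)$ is a $K$-algebra with involution and a priori $\chi_{C,a^2}\in K[X]$; rather, $\chi_{A,a}\in F[X]$ by construction, and once you have $\chi_{A,a}(X)=\chi_{C,a^2}(X^2)$ the membership in $F[X^2]$ follows immediately.
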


\begin{proof}
As $a\in
C'\cap \Symm(\sigma)$ we have that $a^2\in
C\cap\Symm(\sigma)$. 
If
$\sigma$ is symplectic then $\sigma_C$ is symplectic by Proposition~\ref{P:inv-sym-restrict-type} and for
$\ell\in C$ satisfying $\ell+\sigma(\ell)=1$ we obtain that $a^2=a\ell a+\sigma(a\ell a)$.
This shows that $a^2\in \Syms(\sigma_C)$. 

From Corollary~\ref{lem:PC2} we obtain the equality $$\Prd_{A,a}(X)=\Prd_{C,a^2}(X^2)\,.$$ 
We conclude that $\chi_{C,a^2}(X^2)=\chi_{A,a}(X)\in F[X^2]$.
\end{proof}

Back in the more general situation where $K$ is a neat quadratic $F$-algebra, but not necessarily a field, we conclude the following.

\begin{cor}\label{C:prd-anticom-square}
For $a\in C'\cap \Symm(\sigma)$ we have $\chi_{a}(X)\in F[X^2]$.
\end{cor}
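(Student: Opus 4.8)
The plan is to deduce Corollary~\ref{C:prd-anticom-square} from Proposition~\ref{lem:PC3} by a standard reduction to the case where $K$ is a field. The statement $\chi_a(X)\in F[X^2]$ is a polynomial identity in the coefficients of $a$, so it is unaffected by scalar extension; this is the crux of the reduction.

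First I would observe that both $\Syms(\s)$ and the polynomial $\chi_a$ are compatible with extension of scalars: if $F'/F$ is a field extension, then $(A_{F'},\s_{F'})$ has the same type and capacity as $(A,\s)$, the subspace $\Syms(\s_{F'})$ equals $\Syms(\s)\otimes_F F'$, and $\chi_a$ computed in $(A_{F'},\s_{F'})$ agrees with $\chi_a$ computed in $(A,\s)$ for $a\in\Syms(\s)$ (the reduced characteristic polynomial and the Pfaffian characteristic polynomial are both insensitive to scalar extension). Moreover, by Proposition~\ref{prop:neat-ext} the subalgebra $K\otimes_F F'$ is neat in $(A_{F'},\s_{F'})$, and $C'_{A_{F'}}(K\otimes_F F') = C'\otimes_F F'$. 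So if $a\in C'\cap\Symm(\s)$, then $a\otimes 1$ lies in $C'_{A_{F'}}(K\otimes_F F')\cap\Symm(\s_{F'})$, and it suffices to check $\chi_{a\otimes 1}(X)\in F'[X^2]$ after a suitable extension.

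Next I would choose $F'$ so that $K\otimes_F F'$ becomes a \emph{field}. Since $K$ is quadratic \'etale over $F$, either $K$ is already a field — in which case there is nothing to arrange and Proposition~\ref{lem:PC3} applies directly — or $K\simeq F\times F$. Wait: if $K\simeq F\times F$ we cannot make it a field by scalar extension. The correct move here is the opposite one, namely to handle the two cases separately. If $K$ is a field, Corollary~\ref{C:prd-anticom-square} is immediate from Proposition~\ref{lem:PC3}, which gives $\chi_{A,a}(X)=\chi_{C,a^2}(X^2)\in F[X^2]$. If instead $K\simeq F\times F$, then by Proposition~\ref{lem:PC} (via Corollary~\ref{lem:PC2}, whose hypothesis $\dim_F C_A(K)=\tfrac12\dim_F A$ holds because $K$ is neat, by Proposition~\ref{lem:neatquad1}) we already have $\chi_a(X)=\Prd_{A,a}(X)=\Prd_{A_1,uv}(X^2)\in F[X^2]$ when $\s$ is orthogonal or unitary; and when $\s$ is symplectic, one checks the same after extending scalars to split $A$, where the Pfaffian characteristic polynomial of an element anticommuting with $K$ is likewise a polynomial in $X^2$ by the structure exhibited in Example~\ref{E:cap2-split} and Proposition~\ref{lem:PC}. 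Since in all cases the relation $\chi_a\in F[X^2]$ has been verified over a (possibly extended) base and is a coefficientwise polynomial identity, it descends to~$F$.

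The main obstacle I anticipate is the bookkeeping around the symplectic case when $K\simeq F\times F$: here $\chi_a=\Prp_{\s,a}$ is the Pfaffian characteristic polynomial rather than the reduced characteristic polynomial, so one cannot invoke Corollary~\ref{lem:PC2} verbatim and must instead extend scalars to a splitting field, represent $a$ in block-antidiagonal form relative to the idempotents $e_1,e_2$ of $K$ exactly as in the proof of Proposition~\ref{lem:PC}, and argue that the Pfaffian of such a matrix pencil is even in~$X$ — equivalently, that $\Prp_{\s,a}(X)^2=\Prd_{A,a}(X)=\Prd_{A_1,uv}(X^2)$ forces $\Prp_{\s,a}\in F[X^2]$ once one knows the sign of the constant term is correct. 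Apart from that, everything is a routine invocation of results already in hand.
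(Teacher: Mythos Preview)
Your two-case split --- $K$ a field (Proposition~\ref{lem:PC3}) versus $K\simeq F\times F$ (Proposition~\ref{lem:PC}) --- is exactly the paper's proof, which consists of just those two sentences. The opening detour about extending scalars to turn $K$ into a field should simply be deleted; as you yourself realise mid-paragraph, it goes nowhere.

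The only real gap is your handling of the symplectic case with $K\simeq F\times F$, which you correctly flag as the obstacle but do not resolve. Neither of your two suggestions is complete. The route via scalar extension and an explicit matrix Pfaffian is not worked out, and the reference to Example~\ref{E:cap2-split} is inapposite since that example treats only capacity~$2$. Your alternative claim that $\Prp_{\s,a}(X)^2=\Prd_{A,a}(X)\in F[X^2]$ ``forces $\Prp_{\s,a}\in F[X^2]$ once one knows the sign of the constant term is correct'' fails in characteristic~$2$: for instance $(X+1)^2=X^2+1\in F[X^2]$ while $X+1\notin F[X^2]$, so knowing that a square lies in $F[X^2]$ tells you nothing about the root. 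The clean argument --- implicit in the paper here, and exactly parallel to the passage from $\Prd$ to $\chi$ in the proof of Proposition~\ref{lem:PC3} --- is to show that $\Prd_{A_1,uv}$ is itself the square of a monic polynomial. Since $K$ is neat, $\s_1=\s\rvert_{A_1}$ is symplectic; and $uv=e_1a^2e_1\in\Symd(\s_1)$, because for $\ell\in A$ with $\ell+\s(\ell)=1$ one has $a^2=a\ell a+\s(a\ell a)\in\Symd(\s)$, and projecting by $e_1$ puts $uv$ in $\Symd(\s_1)$. Hence $\Prd_{A_1,uv}=\Prp_{\s_1,uv}^2$, so $\Prp_{\s,a}(X)^2=\Prp_{\s_1,uv}(X^2)^2$, and uniqueness of monic square roots gives $\chi_a(X)=\Prp_{\s,a}(X)=\Prp_{\s_1,uv}(X^2)\in F[X^2]$. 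No scalar extension is needed.
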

\begin{proof}
If $K\simeq F\times F$ then the statement follows from Proposition~\ref{lem:PC}. Otherwise $K$ is a field, so that the statement follows from Proposition~\ref{lem:PC3}. 
\end{proof}

\begin{prop}\label{P:cap2-symd-anticom-space}
Assume that $\kap(A,\s)=2$.
Then $$\Syms(\s)=K\oplus(C'\cap\Symm(\s))$$
and this decomposition is orthogonal for the quadratic form $c_2:\Syms(\s)\to F$.
Furthermore 
 $c_2|_K:K\to F$ is the norm form of $K$ and  $c_2(x)=-x^2\in F$ for all $x\in C'\cap\Symm(\s)$.
\end{prop}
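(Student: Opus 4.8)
The plan is to assemble the statement from the structural results already proved, specializing to the case $\kap(A,\s)=2$. First I would observe that $C=C_A(K)$ has $\dim_F C=\frac12\dim_F A$ by Proposition~\ref{lem:neatquad1}, and that $\kap(C,\s_C)=1$ by Proposition~\ref{P:inv-sym-restrict-type}, so $\Syms(\s_C)$ is one-dimensional over $Z(C)$; since $K=Z(C)$ (or $K$ plays the role of the centre in the inner-type case), $K$ itself is the space $\Syms(\s_C)$. The key decomposition $A=C\oplus C'$ from Proposition~\ref{lem:neatquad1}, intersected with $\Syms(\s)$, then gives $\Syms(\s)=\bigl(C\cap\Syms(\s)\bigr)\oplus\bigl(C'\cap\Syms(\s)\bigr)$. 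I would check that $C\cap\Syms(\s)=\Syms(\s_C)=K$: in the symplectic case $\Syms=\Symd$ and the identity $C\cap\Symd(\s)=\Symd(\s_C)$ is exactly what Proposition~\ref{lem:neatquad1} records; in the orthogonal (char $2$) case $\Syms(\s)=\Symm(\s)$ and one needs $C\cap\Symm(\s)=\Symm(\s_C)$, which is immediate; in all other cases $\Syms=\Symd$ and $\Symd(\s_C)\subseteq C\cap\Symd(\s)\subseteq C\cap\Symm(\s)=\Symm(\s_C)=\Symd(\s_C)$. Likewise $C'\cap\Syms(\s)=C'\cap\Symm(\s)$ by Proposition~\ref{lem:neatquad1} (there $C'\cap\Symm(\s)=C'\cap\Symd(\s)\subseteq\Syms(\s)$). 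This yields the asserted direct sum decomposition of $\Syms(\s)$.

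Next I would identify $c_2$ on each summand. On $K$: for $x\in K$ we have $\chi_x=\Prd_{A,x}$ restricted to $K$, but since $[K:F]=2$ and $K$ is a quadratic étale algebra, the reduced characteristic polynomial of $x$ as an element of $A$ equals its characteristic polynomial over $F$ as an element of $K$ — this is because $x$ has $F$-minimal polynomial of degree dividing $2$ and $\chi_x$ has degree $2=\kap(A,\s)$, so $\chi_x(X)=X^2-\Trd_{K/F}(x)X+\Nrd_{K/F}(x)$ and hence $c_2(x)=\Nrd_{K/F}(x)$, the norm form of $K$. (One can verify this after scalar extension splitting $K$, where it is transparent.) On $C'\cap\Symm(\s)$: by Corollary~\ref{C:prd-anticom-square} we have $\chi_a(X)\in F[X^2]$ for $a\in C'\cap\Symm(\s)$, so the coefficient $c_1(a)$ of $X^{d-1}=X$ vanishes; since $\chi_a=X^2-c_1(a)X+c_2(a)$ and $\chi_a(a)=0$, we get $a^2=-c_2(a)$, i.e. $c_2(a)=-a^2\in F$, as claimed. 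Using the formula $c_2(x)=x\bar x$ with $\bar x=c_1(x)-x$ from Proposition~\ref{P:cap2-c2-regquad}, this also says $\bar a=-a$ on $C'\cap\Symm(\s)$ and $\bar x=\Trd_{K/F}(x)-x=\gamma(x)$ on $K$.

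Finally, orthogonality of the decomposition for the quadratic form $c_2$. The polar form $b_{c_2}$ is given by $b_{c_2}(x,y)=c_2(x+y)-c_2(x)-c_2(y)$; using $c_2(z)=z\bar z$ on $\Syms(\s)$ and the additivity of $z\mapsto\bar z$, one gets $b_{c_2}(x,y)=x\bar y+y\bar x$. For $x\in K$ and $a\in C'\cap\Symm(\s)$ we have $\bar x=\gamma(x)\in K$ and $\bar a=-a$, so $b_{c_2}(x,a)=x\bar a+a\bar x=-xa+a\gamma(x)$; but $a\in C'$ means precisely $a k=\gamma(k)a$ for all $k\in K$, hence $a\gamma(x)=\ldots$ — more directly, $ax=\gamma(x)a$ gives $a\bar x=a\gamma(x)=x a$ wait — I should be careful: $C'=\{y\mid yk=\gamma(k)y\}$, so $ax=\gamma(x)a$ and thus $x\bar a + a\bar x = -xa + a\gamma(x)$; from $ax=\gamma(x)a$ applied with $\gamma(x)$ in place of $x$ we get $a\gamma(x)=\gamma(\gamma(x))a=xa$, so $b_{c_2}(x,a)=-xa+xa=0$. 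Thus $K\perp(C'\cap\Symm(\s))$ for $c_2$, completing the proof. The main obstacle I anticipate is the bookkeeping in the char $2$ orthogonal case — making sure $\Syms=\Symm$ there is handled consistently when invoking Proposition~\ref{lem:neatquad1}, whose identities are phrased in terms of $\Symd$ — but this is resolved by the inclusion chain $\Symd(\s_C)\subseteq C\cap\Symd(\s)\subseteq C\cap\Symm(\s)$ together with $C\cap\Symm(\s)=\Symm(\s_C)$ and the fact that $\kap(C,\s_C)=1$ forces $\Symm(\s_C)=K$.
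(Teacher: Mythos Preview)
Your proof is correct and follows essentially the same strategy as the paper's: obtain the decomposition from $A=C\oplus C'$, identify $c_2$ on each summand via the formula $c_2(x)=x\bar x$, and verify orthogonality by computing the polar form. Two execution details differ. First, for the decomposition $\Syms(\s)=K\oplus W$ the paper simply counts dimensions (using $\dim_F W=\tfrac14\dim_FA$ from Proposition~\ref{lem:neatquad1} and $\dim_F\Syms(\s)$ from Proposition~\ref{P:cap2-c2-regquad}), which avoids your appeal to Proposition~\ref{P:inv-sym-restrict-type} and hence the need to treat the case $K\simeq F\times F$ separately. Second, to see $w^2\in F$ for $w\in C'\cap\Symm(\s)$, the paper argues directly that $w^2\in C\cap\Syms(\s)\cap C_A(w)=K\cap C_A(w)=F$, whereas you invoke Corollary~\ref{C:prd-anticom-square} to get $c_1(w)=0$ and then read off $w^2=-c_2(w)\in F$ from $\chi_w(w)=0$; both arguments are valid, and yours has the virtue of making $c_2(w)=-w^2$ immediate.
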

\begin{proof}
Set $V=\Syms(\s)$ and $W=C'\cap\Symm(\s)$.
It follows from Proposition~\ref{lem:neatquad1} and by comparing dimensions that $V=K\oplus W$.
Writing $\ovl{x}=c_1(x)-x$ for $x\in V$ defines an $F$-linear map $V\to V,x\mapsto \ovl{x}$. By Proposition~\ref{P:cap2-c2-regquad} we have $c_2(x)=x\ovl{x}$ for any $x\in V$.

For $x\in F$ we have $\chi_x=(X-x)^2$, whereby $c_1(x)=2x$ and $c_2(x)=x^2$.
For $x\in V\setminus F$ we have that $\chi_x$ is the minimal polynomial of $x$ over $F$.
From this we conclude that $\ovl{x}=\gamma(x)$ for $x\in K$ and that $c_2|_K$ is the norm form of $K$.

For $w\in W$ we have $w^2\in C\cap\Syms(\s)\cap C_A(w) =F$, whence $\chi_w=X^2-w^2$ and $w+\ovl{w}=c_1(w)=0$. For $v\in K$ and $w\in W$, using that  $vw=w\gamma(v)=w\ovl{v}$ and $w+\ovl{w}=0$
 we obtain that $$c_2(v+w)-c_2(v)-c_2(w)=v\ovl{w}+w\ovl{v}=v(w+\ovl{w})=0\,.$$
This shows that $K$ is orthogonal to $W$ with respect to $c_2$ and that $\ovl{x}=-x$ and $c_2(x)=-x^2$ for $x\in W$.
\end{proof}

\begin{cor}\label{T:cap2}
If $\kap(A,\s)=2$, then $K$ is contained in a $\s$-stable quaternion $F$-subalgebra of $A$.
\end{cor}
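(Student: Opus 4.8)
The plan is to use the structure of $\Syms(\s)$ exhibited in Proposition~\ref{P:cap2-symd-anticom-space}. Since $\kap(A,\s)=2$, we have the orthogonal decomposition $\Syms(\s)=K\oplus W$ with $W=C'\cap\Symm(\s)$, and on $W$ the form $c_2$ satisfies $c_2(w)=-w^2\in F$, so that each nonzero $w\in W$ squares to the scalar $-c_2(w)$. First I would pick a nonzero element $w\in W$; since $c_2|_W$ is nondegenerate (it is a nonzero restriction of the nondegenerate form $c_2$, or one checks directly that $W\neq 0$ using the dimension count $\dim_F W=\frac14\dim_F A>0$ when $\kap(A,\s)=2$), such $w$ exists, and in fact one should choose $w$ with $c_2(w)\neq 0$ so that $w$ is invertible. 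Then $Q=K+Kw=K\oplus Kw$ is an $F$-subspace of $A$ of dimension~$4$.

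Next I would verify that $Q$ is an $F$-subalgebra. The relations needed are: $w^2=-c_2(w)\in\mg F$, and the conjugation relation $vw=w\gamma(v)$ for $v\in K$, which holds precisely because $w\in C'=C'_A(K)$. From these it follows that $Q$ is closed under multiplication: $(v_1+v_2w)(v_3+v_4w)=(v_1v_3+v_2\gamma(v_4)w^2)+(v_1v_4+v_2\gamma(v_3))w$, and all coefficients lie in $K$. So $Q$ is a (generalized) quaternion algebra over $F$ — either $Q\simeq (K,w^2)$ in the usual crossed-product presentation when $K$ is a field, or one checks the analogous statement when $K\simeq F\times F$ (in which case $Q$ is split, being $4$-dimensional over $F$, generated by $K$ and an invertible $w$ with the switch relation). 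In all cases $Q$ is a central simple $F$-algebra of degree~$2$, i.e.\ a quaternion $F$-algebra; its centre is $F$ because $w$ does not centralise $K$ (as $\gamma\neq\id$) and $K$ is maximal commutative in $Q$.

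Finally I would check $\s$-stability. We have $\s(K)=K$ since $K\subseteq\Symm(\s)$, and $\s(w)=w$ since $w\in W\subseteq\Symm(\s)$; hence $\s(Q)=\s(K)+\s(K)\s(w)=K+Kw=Q$. Therefore $Q$ is a $\s$-stable quaternion $F$-subalgebra of $A$ containing $K$, as required.

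I do not expect a serious obstacle here; the content has essentially been assembled in Proposition~\ref{P:cap2-symd-anticom-space}. The one point requiring a little care is handling the case $K\simeq F\times F$ uniformly — there one should either extend scalars to reduce to, or directly argue, that $K\oplus Kw$ with the stated relations is a split quaternion algebra (equivalently, apply Proposition~\ref{lem:PC} to see that an appropriate element of $C'\cap\Symm(\s)$ has the right reduced characteristic polynomial), but this is routine. A second minor point is ensuring one can choose $w$ invertible: if $\car F\neq 2$ this is automatic for generic $w$ since $c_2|_W$ is nondegenerate, and if $\car F=2$ one uses that $c_2|_W$ is a nondegenerate (hence anisotropic-free but still represents nonzero values) quadratic form on a space of dimension $\frac14\dim_F A\geq 1$, picking $w$ with $c_2(w)\neq 0$.
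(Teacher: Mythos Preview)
Your proof is correct and follows essentially the same approach as the paper: pick $w\in W=C'\cap\Symm(\s)$ with $w^2\in\mg F$ using that $c_2|_W$ is nondegenerate, and form the $\s$-stable quaternion algebra $K\oplus Kw$. The only imprecision is your parenthetical justification that $c_2|_W$ is nondegenerate because it is ``a nonzero restriction of the nondegenerate form $c_2$'' --- restrictions of nondegenerate forms need not be nondegenerate; the correct reason (which you have available from Proposition~\ref{P:cap2-symd-anticom-space}) is that the decomposition $\Syms(\s)=K\oplus W$ is \emph{orthogonal} for $c_2$ and $c_2|_K$ is the nondegenerate norm form of $K$.
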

\begin{proof}
Let $W=C'\cap \Symm(\s)$.
It follows from  Proposition~\ref{P:cap2-c2-regquad} and Proposition~\ref{P:cap2-symd-anticom-space} that the quadratic form $c_2|_W:W\to F,x\mapsto -x^2$ is nondegenerate. As $W\neq\{0\}$ it follows that there exists $x\in W$ with $x^2\in\mg{F}$.
Since $Kx\subseteq C'$ we conclude that $K\oplus Kx$ is a $\s$-stable quaternion $F$-subalgebra.
\end{proof}

For $a\in C'\cap \Symm(\s)$, by Corollary~\ref{C:prd-anticom-square} there is a unique polynomial $f\in F[X]$ with $\chi_a(X)=f(X^2)$, and we call the element $a$ \emph{square separable} if $f$ is separable.
 
\begin{prop}\label{cor:Zariski-reductiontosplit}
Assume that $F$ is algebraically closed. In $A$ the set $$\{a\in {C'\cap\Symm(\s)}\mid a\mbox{ is square separable} \}\cap \mg{A}$$ is open in ${C'\cap\Symm(\s)}$ with respect to the Zariski topology.
\end{prop}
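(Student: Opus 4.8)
The plan is to describe the set in the statement as the intersection of two Zariski-open subsets of the affine space $C'\cap\Symm(\s)$. First I would handle invertibility: the map $a\mapsto\Nrd_A(a)$ (the constant term of $\Prd_{A,a}$ up to sign) is a polynomial function on $A$, hence on the subspace $C'\cap\Symm(\s)$, and $a\in\mg A$ if and only if $\Nrd_A(a)\neq 0$. So $\{a\in C'\cap\Symm(\s)\mid a\in\mg A\}$ is open. For square separability, recall from Corollary~\ref{C:prd-anticom-square} that for every $a\in C'\cap\Symm(\s)$ one has $\chi_a(X)=f_a(X^2)$ for a unique $f_a\in F[X]$ of degree $\frac12\kap(A,\s)$ (note that $\kap(A,\s)$ is even here, since $C'\cap\Symm(\s)\neq\{0\}$ forces $K$ to admit an anticommuting element, which already appears implicitly). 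The coefficients of $\chi_a$ are the forms $c_i(a)$, which are polynomial (homogeneous of degree $i$) in $a$; hence the coefficients of $f_a$ are polynomial functions of $a$ as well. The discriminant $\mathrm{disc}(f_a)$ is a polynomial in the coefficients of $f_a$, therefore a polynomial function of $a$, and $a$ is square separable precisely when $\mathrm{disc}(f_a)\neq 0$. Thus $\{a\mid a\text{ square separable}\}$ is open, and the intersection of the two open sets is open.

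The one point that requires a small argument, rather than being purely formal, is why the coefficients of $f_a$ (equivalently the nonzero coefficients of $\chi_a$, those in even degree) depend polynomially — and not merely regularly — on $a$. This is where I would invoke that $\chi_a=\Prd_{A,a}$ when $\s$ is orthogonal or unitary and $\chi_a=\Prp_{\s,a}$ when $\s$ is symplectic; in either case the coefficients $c_i\colon\Syms(\s)\to F$ are the polynomial forms introduced just before Proposition~\ref{P:cap2-c2-regquad} (the reduced characteristic polynomial and the Pfaffian characteristic polynomial both have coefficients that are polynomial in the entries, as recalled in the discussion preceding Theorem~\ref{thm:capmaxdim}). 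Restricting these forms to the linear subspace $C'\cap\Symm(\s)$ keeps them polynomial, and reading off $f_a$ amounts to discarding the odd-degree coefficients (which vanish identically on $C'\cap\Symm(\s)$, by Corollary~\ref{C:prd-anticom-square}) and relabelling.

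I expect the main obstacle, such as it is, to be purely expository: making precise that ``square separable'' is a Zariski-open condition requires naming the discriminant polynomial and observing its nonvanishing cuts out an open set, but no genuine algebraic difficulty arises since $F$ is assumed algebraically closed and everything in sight is given by explicit polynomials. A secondary point to state cleanly is that $C'\cap\Symm(\s)$ is itself an affine space (an $F$-subspace of $A$, of dimension $\frac14\dim_F A$ by Proposition~\ref{lem:neatquad1}), so that ``Zariski topology'' on it is unambiguous. With those remarks in place the proof is a one-line combination: the locus of invertible square separable elements is the nonvanishing locus of the single polynomial $\Nrd_A(a)\cdot\mathrm{disc}(f_a)$ on $C'\cap\Symm(\s)$, hence open.
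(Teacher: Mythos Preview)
Your proposal is correct and follows essentially the same route as the paper: both arguments exhibit the set as the intersection of two Zariski-open subsets, using that the discriminant of $f_a$ (a polynomial in the coefficients of $\chi_a$, hence in the coordinates of $a$) detects square separability, and that the reduced norm detects invertibility. The paper's proof is terser and omits your side remarks (on the parity of $\kap(A,\sigma)$ and the dimension of $C'\cap\Symm(\sigma)$), but the substance is identical.
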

\begin{proof}
A polynomial in $F[X]$ is inseparable if and only if its discriminant vanishes.
Hence, for $a\in C'\cap\Symm(\s)$ being square separable is characterised by the nonvanishing of a polynomial in the coefficients of $\chi_{a}(X)$, which in turn are polynomials in the coefficients of $a$ with respect to any fixed $F$-basis of $C'\cap \Symm(\s)$.
Therefore in $C'\cap \Symm(\s)$ the square separable elements form an open subset with respect to the Zariski topology.
On the other hand, in $A$ the invertible elements are characterised by the nonvanishing of the reduced norm, whereby $\mg{A}$ is open in $A$. 
The statement follows from these two observations by basic topology.
\end{proof}

\begin{prop}\label{lem:splitnonempty}
If $|F|>\kap(A,\s)$
then $C'\cap\Symm(\s)\cap\mg{A}$ contains a square separable element.
\end{prop}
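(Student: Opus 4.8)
The plan is to deduce this from Proposition~\ref{cor:Zariski-reductiontosplit} by a density argument, passing to the algebraic closure $\ovl F$ of $F$. The key point is that $C'\cap\Symm(\s)$ is a subspace of $A$ defined over $F$, and the property ``square separable and invertible'' is (by Proposition~\ref{cor:Zariski-reductiontosplit}) a nonempty Zariski-open condition on the $\ovl F$-points $\overline{C'\cap\Symm(\s)}=(C'\cap\Symm(\s))\otimes_F\ovl F$ once we know it is nonempty there. So first I would check nonemptiness over $\ovl F$: by Corollary~\ref{T:cap2} reasoning — or more directly, since over $\ovl F$ the algebra with involution splits and $K_{\ovl F}\simeq \ovl F\times\ovl F$, Proposition~\ref{lem:PC} and Corollary~\ref{C:prd-anticom-square} let one exhibit explicitly an invertible square separable element in $C'\cap\Symm(\s)$ over $\ovl F$ (take $a=u+v$ with $uv$ a regular semisimple invertible element of the simple component $A_1$, so that $f=\Prd_{A_1,uv}$ is separable and $f(0)\neq 0$). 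Hence the open set in Proposition~\ref{cor:Zariski-reductiontosplit} is nonempty.

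The second ingredient is density: because $|F|>\kap(A,\s)=\dim$ of the relevant polynomial, and more to the point because the open subset cut out is defined by the nonvanishing of finitely many polynomials (the discriminant of $f$ and the reduced norm of $a$) each of degree bounded in terms of $\kap(A,\s)$, the hypothesis $|F|>\kap(A,\s)$ guarantees that these polynomials do not vanish identically on the $F$-points $C'\cap\Symm(\s)$. One should phrase this carefully: it is not literally Zariski-density of $F$-points (which can fail over small fields) but rather the Schwartz–Zippel / combinatorial nullstellensatz type statement that a nonzero polynomial of degree $<|F|$ in several variables over $F$ has a nonzero value on $F^n$. Here the reduced norm has degree $\deg A$ and the discriminant of $f$ has degree $2\kap(A,\s)-2$ or so in the coefficients of $f$, which are themselves of bounded degree in the entries of $a$; I would just bound the total degree of the product of these two polynomials by something $\leq \kap(A,\s)$ — actually this needs a little care, see below — and invoke the combinatorial nullstellensatz over $F$.

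\emph{Main obstacle.} The delicate point is the degree bookkeeping: I need the product of the ``invertibility'' polynomial and the ``square-separability'' polynomial, as a polynomial on $C'\cap\Symm(\s)$ in coordinates, to have total degree strictly less than $|F|$, given only $|F|>\kap(A,\s)$. A naive count gives something larger than $\kap(A,\s)$, so one likely needs a sharper argument — for instance restricting to a well-chosen line or small-dimensional subvariety, or using that $C'\cap\Symm(\s)$ already lives inside a structure where the relevant polynomials factor through $a\mapsto a^2\in C$ of capacity one less, so that the effective number of free parameters and the effective degree drop. Concretely, I expect the right move is to use the norm form description from Proposition~\ref{P:cap2-symd-anticom-space} in the capacity-two case and an induction/reduction on $\kap(A,\s)$ via Corollary~\ref{lem:PC2}, so that at each stage one only needs to dodge a quadratic form and a linear condition rather than a high-degree hypersurface. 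Once the nonvanishing is secured over $F$ itself, the resulting element $a\in C'\cap\Symm(\s)\cap\mg A$ is square separable by construction, which is exactly the assertion.
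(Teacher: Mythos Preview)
Your density strategy is sound for infinite $F$, and indeed that is exactly how the paper finishes: once $F$ is infinite, $C'\cap\Symm(\s)$ is Zariski-dense in its $\ovl F$-span, so Proposition~\ref{cor:Zariski-reductiontosplit} plus nonemptiness over $\ovl F$ gives an $F$-point. No degree bookkeeping is needed there.

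The gap is the finite-field case, and your proposed fix does not work. You correctly observe that a naive degree count is far too large: the discriminant of $f$ (where $\chi_a(X)=f(X^2)$) has degree roughly $\tfrac12\kap(A,\s)\bigl(\tfrac12\kap(A,\s)-1\bigr)$ in the coefficients of $f$, and those coefficients are themselves forms of degree up to $\kap(A,\s)$ in $a$; the invertibility condition adds another $\kap(A,\s)$. So the relevant polynomial on $C'\cap\Symm(\s)$ has degree of order $\kap(A,\s)^2$, and Schwartz--Zippel with only $|F|>\kap(A,\s)$ cannot rule out identical vanishing on $F$-points. Your suggested inductive reduction via $a\mapsto a^2$ and Proposition~\ref{P:cap2-symd-anticom-space} does not close this gap either: at each stage you still face a discriminant-type condition whose degree does not drop to something linear in $\kap(A,\s)$.

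The paper avoids this entirely by a structural observation you are missing: if $F$ is finite (or algebraically closed), then $(A,\s)$ is \emph{split}, by Wedderburn's little theorem. One then constructs a square separable invertible element by hand. Concretely, set $r=\tfrac12\kap(A,\s)$; using Corollary~\ref{cor:existsplitneat} or Theorem~\ref{thm:neat-embed-maxetsym} one finds a decomposition of $(A,\s)$ into $r$ pieces $(A_i,\s_i)$ each of capacity~$2$ containing a copy of $K$. Corollary~\ref{T:cap2} yields in each $A_i$ an element $g_i\in C'_{A_i}(Ke_i)\cap\Symm(\s_i)$ with $g_i^2=c_ie_i$, and $c_i$ is adjustable within a coset of $F^{\times 2}$. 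The hypothesis $|F|>\kap(A,\s)$ is used exactly here, to guarantee $|F^{\times2}|\geq r$ so that the $c_i$ can be chosen pairwise distinct and nonzero; then $g=g_1+\cdots+g_r$ has $\chi_g=\prod_i(X^2-c_i)$, hence is invertible and square separable. So the bound $|F|>\kap(A,\s)$ is not a Schwartz--Zippel threshold at all, but a counting requirement for an explicit construction.
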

\begin{proof}
Suppose first that $F$ is finite or algebraically closed.
Then $(A,\s)$ is split.
Set $r=\frac12\kap(A,\s)$. By the hypothesis we have that $|{F^{\times 2}}|\geq r$.
If $K$ is a field then $(C,\s_C)$ is a split $K$-algebra with involution of the same type as $(A,\s)$ and with $\kap(C,\s_C)=r$, so that by Corollary~\ref{cor:existsplitneat} there exists a split neat $K$-subalgebra $L$ of $(C,\s_C)$ with $[L:K]=r$.
If $K\simeq F\times F$ then by Theorem~\ref{thm:neat-embed-maxetsym}, $K$ is contained in a split neat $F$-subalgebra $L$ of $(A,\s)$ with $[L:F]=2r$.
In either of these two cases we have that $L\simeq K^r$ as $F$-algebras.
Let $e_1,\dots,e_r\in L$ be the corresponding idempotents in $L$ satisfying $K\simeq Ke_i$ for $i=1,\dots,r$.
If $K\simeq F\times F$, then we fix a primitive idempotent $f\in K$ and obtain that
$e_1f,\dots,e_rf,e_1(1-f),\dots,e_r(1-f)$ are the primitive idempotents in $L$.
For $i=1,\dots,r$, we set $A_i=e_iAe_i$ and $\s_i=\s|_{A_i}$ and obtain by identifying $F$ with $Fe_i\subseteq A_i$ that $(A_i,\s_i)$ is a split $F$-algebra with involution of the same type as $(A,\s)$ and with $\kap(A_i,\s_i)=\frac{1}r\kap(A,\s)=2$.

Consider $i\in\{1,\dots,r\}$.
By Corollary~\ref{T:cap2} the neat quadratic $F$-subalgebra $Ke_i$ of $A_i$ is contained in a $\s_i$-stable quaternion $F$-subalgebra $Q_i$ of $A_i$.
By the assumption on $F$, $Q_i$ is split, and since $Ke_i\subseteq \Symm(\s_i)$ we obtain by Theorem~\ref{thm:capmaxdim} that $\s_i|_{Q_i}$ is orthogonal. It follows that there exists $g_i\in Q_i\cap\Symm(\s_i)$ with $g_i^2\in F e_i$ and such that 
$\Int_{Q_i}(g_i)$ restricts to the nontrivial $F$-automorphism on $Ke_i$. 
Note that $g_i$ is determined by this property up to a multiple in $\mg{F}$.
Moreover, since $|{F^{\times 2}}|\geq r$, 
 we may choose $g_1,\dots,g_r$ in such way that 
$g_i^2=c_ie_i$ for $i\in\{1,\dots,r\}$ where $c_1,\dots,c_r\in\mg{F}$ are pairwise distinct.
For $g=g_1+\dots+g_r\in C'\cap\Symm(\s)$ it follows that $\Prd_{A,g}=\prod_{i=1}^r(X^2-c_i)$, whereby $g$ is invertible and square separable.

We turn to the general case, where we may assume that $F$ is an infinite field.
We choose an algebraic closure  $\overline F$ of $F$ and consider the $\overline F$-algebra with involution $(\overline A,\overline \s)$ naturally obtained from $(A,\s)$ by letting $\overline A=A\otimes_F\overline F$ and $\overline \s=\s\otimes \id_{\overline F}$.
For any $F$-subspace $W$ of $A$ we write $\overline W=W\otimes_F \overline F$ and note that $W$ is dense in $\overline W$ for the Zariski topology.
By the above and by Proposition~\ref{cor:Zariski-reductiontosplit} the elements of  $\overline{C'}\cap \Symm({\overline \s})$ that are square separable and invertible in $\overline A$ form a nonempty  Zariski-open subset of $\overline{C'}\cap\Symm(\overline\sigma)$. 
As $\overline{C'}\cap\Symm(\overline\sigma)=\overline{C'\cap\Symm(\s)}$, we obtain that there exists a square separable invertible element in $C'\cap\Symm(\s)$.
\end{proof}

\begin{prop}\label{prop:sepelement}
Let $a\in C'\cap\Symm(\s)\cap\mg{A}$ be square separable.
Then $F[a^2]$ and $K[a^2]$ are neat $F$-subalgebras of $(A,\s)$ such that $F[a^2]$ is $F$-linearly disjoint from $K$ and $[K[a^2]:F]=[F[a]:F]=\kap(A,\s)$.
\end{prop}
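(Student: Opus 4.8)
The plan is to establish the four claims in turn, working with the element $b=a^2$, which lies in $C\cap\Symm(\s)$ by Proposition~\ref{lem:neatquad1}, and in fact in $\Syms(\s_C)$ by Proposition~\ref{lem:PC3} (when $K$ is a field) or directly when $K\simeq F\times F$. First I would recall from Corollary~\ref{C:prd-anticom-square} that $\chi_a(X)=f(X^2)$ for a unique $f\in F[X]$, and that square separability of $a$ means exactly that $f$ is separable; since $\chi_a$ has the same irreducible factors as the minimal polynomial of $a$, the hypothesis will force $\chi_a$ to be separable too (the roots of $f(X^2)$ are simple precisely when $f$ is separable and has nonzero roots, the latter being guaranteed by $a\in\mg A$), hence $\chi_a$ is the minimal polynomial of $a$ over $F$ and $f$ is the minimal polynomial of $b=a^2$ over $F$. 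This immediately gives $F[a^2]=F[b]$ \'etale with $[F[b]:F]=\deg f=\frac12\deg\chi_a=\frac12\kap(A,\s)$ --- wait, one must be careful: $\deg\chi_a=\kap(A,\s)$ and $\deg f=\frac12\kap(A,\s)$, so the asserted equality $[K[a^2]:F]=[F[a]:F]=\kap(A,\s)$ refers to $K[a^2]$, not $F[a^2]$. So the degree bookkeeping is: $[F[a]:F]=\deg\chi_a=\kap(A,\s)$, and $[F[a^2]:F]=\frac12\kap(A,\s)$, so $[K[a^2]:F]=2\cdot\frac12\kap(A,\s)=\kap(A,\s)$ once one knows $F[a^2]$ and $K$ are linearly disjoint.

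Next I would show $F[a^2]$ is $F$-linearly disjoint from $K$. Since $a^2\in C=C_A(K)$, the subalgebra $F[a^2]$ centralises $K$, so $K\cdot F[a^2]=K[a^2]$ is commutative and is a quotient of $K\otimes_F F[a^2]$; linear disjointness amounts to this surjection being injective, equivalently $[K[a^2]:F]=2[F[a^2]:F]$. To see the latter, observe that $a\notin C$ (as $a\in C'$ and $a\neq 0$, while $C\cap C'$ consists of elements with $2a=0$ hence $a=0$ in the relevant sense --- more cleanly, $a\notin F[a^2]$, for if $a=g(a^2)$ then $a$ would commute with $K$, contradicting $a\in C'\setminus C$, noting $a$ anticommutes nontrivially with $K\setminus F$ since $au+ua=a$ with $u\notin F$ forces $a\notin C_A(u)$). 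Since $f=\chi_{F[a^2],a^2}$ is separable and $F[a]=F[a^2][a]$ with $a$ a root of the separable polynomial $\chi_a(X)=f(X^2)$, and $[F[a]:F]=2[F[a^2]:F]$, the algebra $F[a]$ is \'etale of degree $\kap(A,\s)$; then inside $F[a]$ one can exhibit an element $u'$ spanning a copy of $K$ linearly disjoint from $F[a^2]$, or argue more directly via the Zariski-density/splitting reduction. The cleanest route: reduce to $F$ algebraically closed (all the relevant equalities of dimensions are preserved and can be checked after scalar extension by Proposition~\ref{prop:neat-ext} and the fact that \'etale-ness and degrees are insensitive to field extension), use the explicit diagonal description of $g=g_1+\dots+g_r$ from the proof of Proposition~\ref{lem:splitnonempty}, where $K[g^2]\simeq K^r$ and $F[g^2]\simeq F^r$ are manifestly linearly disjoint.

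For neatness of $F[a^2]$ and $K[a^2]$: the algebra $L:=F[a]$ is \'etale, contained in $\Symm(\s)$ (as $a\in\Symm(\s)$), and of degree exactly $\kap(A,\s)$, so by Proposition~\ref{prop:maxneat} it is neat in $(A,\s)$. Since $L=F[a]\supseteq K[a^2]\supseteq F[a^2]$ and $L$ is free (being \'etale of the appropriate degree) as a module over each of these \'etale subalgebras --- here I would invoke Lemma~\ref{lem:subneat}, needing that $L$ is free as a $K[a^2]$-module and as an $F[a^2]$-module, which follows because over an \'etale (hence semisimple) base, finitely generated modules of constant rank are free, and the ranks are $[L:F]/[K[a^2]:F]=1$ and $[L:F]/[F[a^2]:F]=2$ respectively, both constant. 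Thus Lemma~\ref{lem:subneat} gives that $K[a^2]$ and $F[a^2]$ are neat in $(A,\s)$.

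The main obstacle I anticipate is the linear disjointness claim and the precise degree count $[F[a]:F]=\kap(A,\s)$: one must rule out that $a$ satisfies a smaller-degree relation over $F$ or over $F[a^2]$, i.e. genuinely use that $a$ anticommutes nontrivially with $K$ (so $a\notin C$, hence $a\notin F[a^2]\subseteq C$) together with square separability (so $\chi_a=f(X^2)$ is the minimal polynomial, not a proper multiple of it). Once $[F[a]:F]=\kap(A,\s)=2[F[a^2]:F]$ is secured, the containment $F[a^2]\subsetneq F[a]$ forces $[F[a]:F[a^2]]=2$, and since $K\subseteq F[a]$ would have to be checked --- actually $K$ need not lie in $F[a]$, so the correct statement is only that $K[a^2]$ has degree $\kap(A,\s)$, proved via linear disjointness as above, with the algebraically-closed reduction providing the safety net. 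Everything else is routine bookkeeping with \'etale algebras and the already-established Propositions~\ref{prop:maxneat}, \ref{lem:PC3} and Corollary~\ref{C:prd-anticom-square}.
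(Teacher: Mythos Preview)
Your plan has the right overall shape, but there is a genuine gap in characteristic~$2$. You argue that $\chi_a=f(X^2)$ is separable (``the roots of $f(X^2)$ are simple precisely when $f$ is separable and has nonzero roots''), and from this deduce that $\chi_a$ is the minimal polynomial of $a$ and that $F[a]$ is \'etale of degree $\kap(A,\s)$. But in characteristic~$2$ the formal derivative of $f(X^2)$ is $2Xf'(X^2)=0$, so $f(X^2)$ is \emph{never} separable when $\deg f\geq 1$; over an algebraic closure it factors as $\prod_i(X-\sqrt{\alpha_i})^2$. Consequently $F[a]=F[a^2][X]/(X^2-a^2)$ is \emph{not} \'etale in characteristic~$2$, and your appeal to Proposition~\ref{prop:maxneat} with $L=F[a]$ fails. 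The algebraically-closed reduction does not rescue this: the inseparability persists over~$\overline F$, and the explicit element from the proof of Proposition~\ref{lem:splitnonempty} is a particular element, not the arbitrary square-separable $a$ of the statement. You also correctly observe that $K\not\subseteq F[a]$ (indeed $F[a]\cap C=F[a^2]$ and conjugation by $a$ is nontrivial on $K$), so $K[a^2]\not\subseteq F[a]$ in any characteristic, blocking neatness of $K[a^2]$ via Lemma~\ref{lem:subneat} with $L=F[a]$.

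The paper avoids both problems by working with $K[a^2]$ rather than $F[a]$ as the maximal neat subalgebra. The equality $[F[a^2]:F]=\deg f$ holds without separability of $\chi_a$: writing $g$ for the minimal polynomial of $a^2$, one has $g\mid f$ from $f(a^2)=0$, while $m_a\mid g(X^2)$ combined with the fact that $m_a$ and $\chi_a=f(X^2)$ have the same irreducible factors forces every irreducible factor of $f(X^2)$ to divide $g(X^2)$, hence $f\mid g$ since $f$ is separable; thus $g=f$. The decomposition $F[a]=F[a^2]\oplus aF[a^2]$ (coming from $A=C\oplus C'$ and $a\in\mg A$) then gives $[F[a]:F]=2\deg f=\kap(A,\s)$. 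For linear disjointness, the paper notes that conjugation by $a$ is a nontrivial order-$2$ automorphism of the commutative algebra $K[a^2]$ fixing $F[a^2]$; since $K[a^2]$ is generated over $F[a^2]$ by the $2$-dimensional algebra $K$, this forces $[K[a^2]:F[a^2]]=2$. Now $K[a^2]\simeq K\otimes_F F[a^2]$ is an \'etale $F$-algebra of degree $\kap(A,\s)$ contained in $\Symm(\s)$, so Proposition~\ref{prop:maxneat} applies to $K[a^2]$, and neatness of $F[a^2]$ then follows from Lemma~\ref{lem:subneat} with $L=K[a^2]$.
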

\begin{proof}
By the hypothesis $\chi_a(X)=f(X^2)$ for a separable polynomial $f\in F[X]$.
In particular $\deg f=\frac12\deg \chi_a=\frac12\kap(A,\s)$.
As $f$ is separable and $f(a^2)=0$, the $F$-algebra $F[a^2]$ is \'etale and $[F[a^2]:F]=\deg f=\frac12 \kap(A,\s)$.
Since $a\in C'$ we have that $a^2\in C$, whereby $F[a^2]\subseteq C$ and $aF[a^2]\subseteq C'$, which shows that $F[a]=F[a^2]\oplus aF[a^2]$. Since $a\in\mg{A}$ it follows that $$[F[a]:F]=2\cdot [F[a^2]:F]=\kap(A,\s)\,.$$
As $a^2\in C$ the $F$-algebra $K[a^2]$ is commutative. 
Conjugation by $a$ restricts to a nontrivial $F$-automorphism of order two on $K[a^2]$ which fixes $F[a^2]$. It follows that $K$ is $F$-linearly disjoint from $F[a^2]$ and that $[K[a^2]:F[a^2]]=2$.
Hence, $[K[a^2]:F]=\kap(A,\s)$ and $K[a^2]$ is neat in $(A,\s)$, by Proposition~\ref{prop:maxneat}.
Since $K[a^2]$ is free as an $F[a^2]$-module, it follows by Lemma~\ref{lem:subneat} that $F[a^2]$ is neat in $(A,\s)$.
\end{proof}

\begin{thm}\label{thm:neatquad}
There exists a neat subalgebra $L$ of $(A,\sigma)$ contained in $C_A(K)$, 
   $F$-linearly disjoint from $K$ and such that $KL$ is a neat subalgebra of $(A,\sigma)$  with $[KL:F]=\kap(A,\sigma)$.
\end{thm}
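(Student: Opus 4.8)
The plan is to reduce the statement to the case where $F$ is infinite, and then to produce the neat subalgebra $L$ as $F[a^2]$ for a suitably chosen square separable invertible element $a\in C'\cap\Symm(\s)$, invoking the preparatory results just established.

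First I would dispose of the case where $F$ is finite. If $F$ is finite, then $A$ is split (Wedderburn), so by Proposition~\ref{lem:capmaxdimsplit} and Lemma~\ref{L:split-etale-infinite} there is a split \'etale $F$-subalgebra of $A$ contained in $\Syms(\s)$ of dimension $\kap(A,\s)$; but we need it to contain $K$. Here I would instead use Theorem~\ref{thm:neat-embed-maxetsym}: since $K$ is neat in $(A,\s)$ and (in the finite field case) $K$ and $(A,\s)$ are split, there is a \emph{split} \'etale $F$-subalgebra $M\supseteq K$ with $M\subseteq\Symm(\s)$, $[M:F]=\kap(A,\s)$, and $M$ free as a $K$-module. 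Writing $M=K\otimes_F L_0$ for the obvious complement $L_0$ (the span of a set of idempotents realising the freeness), one gets $L=L_0$ neat by Lemma~\ref{lem:subneat}, $F$-linearly disjoint from $K$, contained in $C_A(K)$, with $KL=M$ neat of maximal dimension. So for the remainder I assume $F$ is infinite, hence $|F|>\kap(A,\s)$.

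Now apply Proposition~\ref{lem:splitnonempty}: since $|F|>\kap(A,\s)$, there exists a square separable invertible element $a\in C'\cap\Symm(\s)\cap\mg A$. Then Proposition~\ref{prop:sepelement} directly gives what we want: $L:=F[a^2]$ and $K[a^2]$ are neat $F$-subalgebras of $(A,\s)$, $F[a^2]$ is $F$-linearly disjoint from $K$, and $[K[a^2]:F]=\kap(A,\s)$. It remains only to record the minor points not literally stated in Proposition~\ref{prop:sepelement}: that $L=F[a^2]\subseteq C_A(K)$, which holds because $a\in C'$ forces $a^2\in C=C_A(K)$ and $K$ is commutative; and that $KL=K[a^2]$ is a neat subalgebra with $[KL:F]=\kap(A,\s)$, which is exactly the content of Proposition~\ref{prop:sepelement}. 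Thus $L=F[a^2]$ satisfies all the required properties.

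The main obstacle is the finite-field case, where Proposition~\ref{lem:splitnonempty} does not apply and one must extract $L$ and the containment $K\subseteq KL$ from the structural Theorem~\ref{thm:neat-embed-maxetsym} rather than from a generic element; care is needed to check that the complement $L_0$ produced there is genuinely a subalgebra on which $K\otimes_F L_0\cong KL$ and that Lemma~\ref{lem:subneat} applies (i.e.\ that $M$ is free as an $L_0$-module, equivalently $[M_i:L_{0,i}]$ constant, which follows since $M\cong K^{[M:F]/2}$ in the split case and $L_0\cong F^{[M:F]/2}$). The infinite-field case, by contrast, is essentially a bookkeeping assembly of Propositions~\ref{lem:splitnonempty} and~\ref{prop:sepelement}.
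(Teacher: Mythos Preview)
Your treatment of the infinite-field case is exactly the paper's: invoke Proposition~\ref{lem:splitnonempty} to produce a square separable invertible $a\in C'\cap\Symm(\sigma)$ and then read off $L=F[a^2]$ from Proposition~\ref{prop:sepelement}. That part is fine.

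The gap is in your finite-field case. You assert that when $F$ is finite, ``$K$ and $(A,\sigma)$ are split''. Wedderburn gives you that $A$ is split, but $K$ is an arbitrary neat quadratic $F$-subalgebra and can perfectly well be a (separable) quadratic field extension of $F$; finite fields have such extensions. In that situation the ``moreover'' clause of Theorem~\ref{thm:neat-embed-maxetsym} does not apply, and even the main clause only hands you an \'etale $M\supseteq K$ free over $K$ with $[M:F]=\kap(A,\sigma)$. Your construction of ``the obvious complement $L_0$ (the span of a set of idempotents realising the freeness)'' presupposes that $M$ has enough idempotents to split off a copy of $F^{[M:K]}$ inside it, which is exactly what fails when $K$ is a field and $M$ is, say, a single field extension of $K$. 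So the case $F$ finite, $K$ a field, is not covered.

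The paper closes this by treating the whole split case (not just finite $F$) and, when $K$ is a field, passing to the $K$-algebra with involution $(C,\sigma_C)$ via Proposition~\ref{P:inv-sym-restrict-type}: since $(C,\sigma_C)$ is split with $\kap(C,\sigma_C)=\tfrac12\kap(A,\sigma)$, Proposition~\ref{lem:capmaxdimsplit} produces a \emph{split} \'etale $K$-subalgebra $M\subseteq\Syms(\sigma_C)$ of the right $K$-dimension; then $L$ is taken to be the $F$-subalgebra of $M$ generated by its primitive idempotents, so $L\simeq F^{[M:K]}$, $KL=M$, and Lemma~\ref{lem:subneat} applies. You should insert this argument (or an equivalent one) for the sub-case $K$ a field.
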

\begin{proof}
Assume that $(A,\s)$ is split.
If $K\simeq F\times F$, then the statement follows immediately from Theorem~\ref{thm:neat-embed-maxetsym}.
Suppose now that $K$ is a field.
Set $C=C_A(K)$ and $\s_C=\s|_C$.
By Proposition~\ref{P:inv-sym-restrict-type} we obtain that $(C,\s_C)$ is a split $K$-algebra with involution such that $\kap(C,\s_C)=\frac12\kap(A,\s)$. It follows by Proposition~\ref{lem:capmaxdimsplit} that $\Syms(\s_C)$ contains a split \'etale $K$-algebra $M$ of $A$ with $[M:K]=\kap(C,\s_C)$. By Proposition~\ref{prop:maxneat} we have that $M$ is neat in $(A,\s)$.
 Let $L$ be the $F$-subalgebra of $M$ generated by the idempotent elements in $M$. Then $L$ is $F$-linearly disjoint from $K$ and $KL=M$, thus $[KL:F]=[M:K]\cdot [K:F]=\kap(A,\s)$.
Furthermore, $M$ is free as an $L$-module, whence $L$ is neat in $(A,\s)$ by Lemma~\ref{lem:subneat}. 

Hence the statement holds when $A$ is split.
In particular it holds when $F$ is finite.
Assume now that $F$ is infinite.
By Proposition~\ref{lem:splitnonempty} there exists an element $a\in C'\cap\Symm(\s)\cap\mg{A}$ which is square separable.
Then Proposition~\ref{prop:sepelement} shows that $L=F[a^2]$ has the desired property.
\end{proof}

\begin{rem}
If $\car F\neq 2$ then instead of the set in Proposition~\ref{cor:Zariski-reductiontosplit} one may consider the set $\{a\in C'\cap\Symm(\s)\cap\mg{A}\mid \chi_{a}\mbox{ separable}\}$.
To see that this set is Zariski-open in $C'\cap\Symm(\s)$ when $F$ is algebraically closed is easier, as it does not involve Proposition~\ref{lem:PC}, Corollary~\ref{lem:PC2} and Proposition~\ref{lem:PC3}.
Note however that this set is empty if $\car F=2$.
\end{rem}

\section{Capacity four} 
\label{S:cap4}

In this section we consider in more detail algebras with involution of capacity four and show the existence of  biquadratic neat subalgebras (Theorem~\ref{T:neat-quad-sym-exist}). 
We shall in particular be interested in the case of symplectic involutions on algebras of degree eight.
In this case we will conclude the existence of a triquadratic \'etale extension of the centre which is stable under the involution (Theorem~\ref{T:symp-deg8-triquad}).
In particular, we obtain a new proof to Rowen's Theorem stating that every degree eight algebra of exponent two contains a triquadratic \'etale subalgebra (Corollary~\ref{C:AR}).

We need the following two preparatory results, which are  well-known.

\begin{prop}
  \label{L:Merk-triv}
Assume that $A$ is a central simple $F$-algebra. There exist $r\in\nat$ and a sequence of separable quadratic field extensions $(F_i/F_{i-1})_{i=1}^r$ with $F_0=F$ such that $\ind A_{F_r}$ is odd. 
\end{prop}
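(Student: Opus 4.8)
The plan is to prove Proposition~\ref{L:Merk-triv} by induction on the $2$-adic valuation of $\ind A$, using the standard device of killing the index by passing to a splitting field of a maximal subfield of a suitable underlying division algebra. More precisely, write $\ind A=2^mq$ with $q$ odd; I would argue by induction on $m$. If $m=0$ there is nothing to do, so suppose $m\geq 1$. Let $D$ be the central division $F$-algebra Brauer equivalent to $A$, so $\deg D=\ind A=2^mq$ is even. The key step is to exhibit a separable quadratic field extension $F_1/F$ such that $v_2(\ind A_{F_1})<m$; then $A_{F_1}$ is a central simple $F_1$-algebra to which the induction hypothesis applies, yielding separable quadratic extensions $(F_i/F_{i-1})_{i=2}^r$ with $\ind A_{F_r}$ odd, and prepending $F_1$ gives the desired sequence.

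To produce $F_1$: since $\deg D$ is even and divisible by $2$, the algebra $D$ contains a separable quadratic field extension of $F$. This is elementary in characteristic different from two (any element of $D\setminus F$ whose minimal polynomial has even degree can be adjusted, or one uses that a division algebra of even degree contains a quadratic subfield); in characteristic two one must take a little care to get a \emph{separable} quadratic subfield, which holds because $D$, being of even degree, contains a separable element generating a subfield whose degree one can cut down to $2$ — alternatively one invokes the well-known fact that a central division algebra always contains a separable maximal subfield, hence subfields of every degree dividing $\deg D$ that one can arrange to be separable, in particular a separable quadratic one. Call it $F_1$. Then $F_1$ splits a degree-$2$ chunk of $D$: concretely, $D\otimes_F F_1$ has index $\ind D/[F_1:F]=\ind D/2$ when $F_1$ embeds in $D$ (this is the standard computation, e.g.\ via $D\otimes_F F_1\cong M_2(C)$ with $C=C_D(F_1)$ of degree $\ind D/2$). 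Hence $v_2(\ind A_{F_1})=v_2(\ind D_{F_1})=m-1<m$, as required.

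The main obstacle, and the only place requiring genuine attention, is the characteristic-two subtlety: one must ensure the quadratic subfield of $D$ is \emph{separable} over $F$, since an inseparable quadratic extension would not serve the purpose stated (and in any case the conclusion about odd index still goes through, but the extensions in the statement are required to be separable). The cleanest route is to cite the existence of a separable maximal subfield of the central division algebra $D$ (this is classical — see e.g.\ the references \cite{Pierce} or \cite{GS06}), take inside it any subfield of degree $2$ over $F$, and observe it is automatically separable over $F$ as a subextension of a separable one. Everything else — the index drop under a splitting quadratic extension and the induction bookkeeping — is routine.

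Alternatively, and perhaps more in the spirit of a short well-known lemma, one can phrase the whole argument in one stroke: repeatedly, as long as $\ind$ of the current algebra is even, adjoin a separable quadratic subfield of the underlying division algebra; each such step halves the $2$-part of the index, so after finitely many steps the index becomes odd. I would present it in this iterative form, with the index-halving fact and the existence of a separable quadratic subfield of an even-degree division algebra as the two cited ingredients.
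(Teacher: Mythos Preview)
Your inductive strategy hinges on the claim that a central division algebra $D$ of even degree contains a separable quadratic subfield $F_1/F$, and your justification for this is flawed. You argue that $D$ ``contains a separable maximal subfield, hence subfields of every degree dividing $\deg D$''; but a separable extension $L/F$ of degree $n$ need not have intermediate fields of every degree dividing $n$. For instance, a quartic extension whose Galois closure has group $S_4$, with $L$ the fixed field of a point stabiliser $S_3$, has no quadratic subextension at all, since $S_3$ is maximal in $S_4$. So one cannot simply extract a degree-$2$ subfield from a chosen separable maximal subfield, and your characteristic-$\neq 2$ remark about ``adjusting'' an element with even-degree minimal polynomial is too vague to close the gap.

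This is not a minor oversight: producing such an $F_1$ (or, more modestly, a separable quadratic extension over which $v_2(\ind A)$ actually drops) is essentially the entire content of the proposition. The paper does not attempt it. Instead, it uses primary decomposition to write $A\sim B\otimes C$ with $\ind B$ odd and $\ind C=2^m$, and then cites an external result (\cite{Bec16}, or Merkurjev's theorem as an alternative) asserting that any $2$-primary Brauer class is split by some tower of separable quadratic extensions. Crucially, the quadratic steps in that tower are \emph{not} required to embed in the underlying division algebra; the index may remain constant across several steps before eventually dropping. Your induction, by contrast, demands a strict decrease of $v_2(\ind A)$ at every step, and that is precisely the unproven assertion.
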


\begin{proof}
Primary decomposition (cf.~\cite[Proposition~4.5.16]{GS06}) yields that $A\simeq B\otimes C$ for two central simple $F$-algebras $B$ and $C$ such that $\ind B$ is odd and $\ind  C=2^m$ for some $m\geq 1$.
Then $C$ represents an element of order dividing $2^m$ in the Brauer group of $F$.
By \cite[Theorem]{Bec16} there exist
 $r\in\nat$ and a sequence of separable quadratic field extensions  $(F_i/F_{i-1})_{i=1}^r$ with $F_0=F$ and such that $C_{F_r}$ is split. 
(Alternatively, this can be derived from Merkurjev's Theorem \cite[Theorem~1.5.8]{GS06}.)
It follows that $\ind A_{F_r}$ divides $\ind B$. 
\end{proof}

\begin{prop}[Springer]
  \label{P:cubic-Springer}
Any cubic form over $F$ which has a nontrivial zero over a quadratic field extension of $F$ also has a nontrivial zero over $F$.
\end{prop}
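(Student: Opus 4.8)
The plan is to prove Springer's theorem on cubic forms by a direct Galois-theoretic descent argument, exploiting the fact that a cubic hypersurface through a rational point has a residual conic. Suppose $\varphi$ is a cubic form over $F$ in variables $x_1,\dots,x_n$ and that it has a nontrivial zero over a quadratic field extension $E/F$; write $\sigma$ for the nontrivial $F$-automorphism of $E$ (or, if $E$ is not a field but rather $F\times F$, the statement is trivial since a zero over $F\times F$ already has a component that is a nontrivial zero over $F$). Let $v\in E^n$ be a nontrivial zero of $\varphi$. If $v$ is, up to an $E^\times$-scalar, already defined over $F$, we are done; so assume $v$ and $\sigma(v)$ are $F$-linearly independent vectors in $E^n$, equivalently they span a $2$-dimensional $F$-subspace after suitable normalisation.

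The key step is to restrict $\varphi$ to the line (in $\mathbb{P}^{n-1}$) through the two points $[v]$ and $[\sigma(v)]$. First I would choose a basis so that this line is parametrised over $F$: since $\{v,\sigma(v)\}$ is stable under $\sigma$, the $F$-span $W=Fv+F\sigma(v)$ inside $E^n$ meets $F^n$ in a $2$-dimensional $F$-subspace $W_0$ (namely $W_0 = W\cap F^n$, which has $F$-dimension $2$ because $W$ is $\sigma$-stable and $\dim_E W \otimes$ considerations force it), and $[v],[\sigma(v)]$ lie on the projective line $\mathbb{P}(W_0\otimes_F E)$. Pick an $F$-basis $w_1,w_2$ of $W_0$ and consider $g(s,t)=\varphi(sw_1+tw_2)\in F[s,t]$, a binary cubic form over $F$. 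By construction $g$ has the root $[v]$ over $E$; since $g$ has $F$-coefficients, $\sigma$ applied to this root gives that $[\sigma(v)]$ is also a root. Thus the binary cubic $g$ has two distinct roots in $\mathbb{P}^1(E)$ that form a Galois-stable pair, hence their product is a quadratic factor of $g$ defined over $F$; dividing, the remaining linear factor is defined over $F$, and its root is a point of $\mathbb{P}^1(F)\subseteq \mathbb{P}(W_0)\subseteq \mathbb{P}^{n-1}$ at which $\varphi$ vanishes. This yields a nontrivial zero of $\varphi$ over $F$.

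The main obstacle I anticipate is the bookkeeping around the degenerate configurations: one must carefully handle the case where $[v]=[\sigma(v)]$ in $\mathbb{P}^{n-1}(E)$ (which forces $v$ to be an $E^\times$-multiple of an $F$-rational vector, giving the conclusion immediately) and, more subtly, the case where the binary cubic $g$ vanishes identically (which happens when the whole line lies on the cubic hypersurface, and then again any $F$-point of that line works). One also needs the elementary fact that a $\sigma$-stable two-element subset of $\mathbb{P}^1(E)$ is either a pair of conjugate points cut out by an irreducible $F$-quadratic factor, or a pair of $F$-rational points --- in both cases the residual linear factor of the $F$-rational cubic $g$ is $F$-rational, so its zero is defined over $F$. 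None of this requires $\car F\ne 2$, and in fact the argument is uniform in the characteristic, which is exactly what the paper wants.
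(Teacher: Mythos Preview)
Your approach is the paper's approach in geometric clothing: both restrict $\varphi$ to the $F$-rational line through $[v]$ and its conjugate, obtain a binary cubic over $F$ with a conjugate pair of $E$-roots, and read off the residual $F$-rational root. The paper carries this out without any descent machinery by writing $v=(b_i+c_i\alpha)_i$ with $b_i,c_i\in F$ and $\alpha$ a generator of $E/F$, so that $f(b_1+c_1X,\dots,b_n+c_nX)=p(X)h(X)$ with $p$ the minimal polynomial of $\alpha$ and $\deg h\leq 1$; the two cases ($h$ constant, forcing $f(c)=0$, and $h$ linear, yielding an $F$-root) are exactly your ``residual linear factor'' and ``$g\equiv 0$ or degenerate'' cases.

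One genuine slip in your write-up: the $F$-span $W=Fv+F\sigma(v)$ does \emph{not} in general meet $F^n$ in a $2$-dimensional subspace. For instance with $E=\mathbb{Q}(i)$ and $v=(1,i)\in E^2$ one finds $W\cap\mathbb{Q}^2=\mathbb{Q}\cdot(1,0)$, which is $1$-dimensional. The correct object is the $E$-span $Ev+E\sigma(v)$, whose $\sigma$-fixed part is the desired $2$-dimensional $F$-subspace --- and in the paper's coordinates this is simply $Fb+Fc$, which is why the explicit parametrisation $X\mapsto b+cX$ sidesteps the descent bookkeeping entirely. Once you make this correction your argument goes through, and it is then literally the homogenised form of the paper's proof.
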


\begin{proof}
  Consider a cubic form $f$ in $n$ variables over $F$.  
We suppose that $f$ has a nontrivial zero in $F[X]/(p)$ for some irreducible quadratic polynomial $p\in F[X]$.
  Hence there exist $b_1$, $c_1$, \ldots, $b_n$, $c_n\in F$, not all
  zero, and $h\in F[X]$ such that
  \[
  f(b_1+c_1X,\dots,b_n+c_nX)= p(X)\cdot h(X)\,.
  \]

  Suppose first that $h\in F$.  Comparing coefficients in degree $3$
  we obtain that $f(c_1,\dots,c_n)=0$.  Moreover, if
  $c_1=\dots=c_n=0$ then we obtain further that
  $f(b_1,\dots,b_n)=0$.  As $b_1$, $c_1$, \ldots, $b_n$, $c_n\in F$ are not
  all zero, it follows that $f$ has a nontrivial zero in $F$.

Suppose now that $h\notin F$.  
As
  $\deg(f(b_1+c_1X,\dots,b_n+c_nX))\leq \deg(f)=3$ and
  $\deg(p)=[K:F]=2$, we conclude that $\deg(h)= 1$.  Hence there exists $a\in F$ such that $h(a)=0$.  
Then
  $f(b_1+c_1a,\dots,b_n+c_na)=0$.  Moreover, if $b_i+c_ia=0$ for
  $i=1$, \ldots, $n$, then
  $0=f(c_1(X-a),\dots,c_n(X-a))=(X-a)^3f(c_1,\dots,c_n)$ and
  thus $f(c_1,\dots,c_n)=0$.  Hence $f$ has a nontrivial zero in $F$.
\end{proof}
 
Now let $(A,\sigma)$ be an $F$-algebra with involution. 
Recall that for  $d=\kap(A,\sigma)$ and $a\in \Syms(\s)$ we have $$\chi_a=X^d-c_1(a)X^{d-1}+c_2(a)X^{d-2}+\dots+(-1)^dc_d(a)\in F[X]\,.$$

\begin{lem}
  \label{lem:c1c3-vanish}
  Assume that $\kap(A,\sigma)$ is a multiple of $4$.
  There exists an element $a\in \Syms(\sigma)\setminus F$ such that
  $c_1(a)=c_3(a)=0$. 
\end{lem}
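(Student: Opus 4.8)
The plan is to reduce, via Springer's theorem for cubic forms, to a field over which $(A,\s)$ is split (or at least has large coindex), and there to exhibit $a$ inside a split neat subalgebra. Write $d=\kap(A,\s)$ and $V=\Syms(\s)$; we have $\dim_FV\geq d$ by Theorem~\ref{thm:capmaxdim}, and we look for $a\in V\setminus F$ with $c_1(a)=c_3(a)=0$. First I would record the elementary identities coming from $\chi_\lambda=(X-\lambda)^d$ for $\lambda\in F$ and $\chi_{a+\lambda}(X)=\chi_a(X-\lambda)$: namely $c_1(\lambda)=d\lambda$, $c_2(\lambda)=\binom d2\lambda^2$, $c_3(\lambda)=\binom d3\lambda^3$, and, for $a\in\ker c_1$ and $\lambda\in F$,
\[
c_3(a+\lambda)=c_3(a)+(d-2)\lambda\,c_2(a)+\textstyle\binom d3\lambda^3.
\]
Since $4\mid d$, both $d-2$ and $\binom d3$ vanish in $F$ when $\car F=2$; and when $\car F=p$ is odd with $p\mid d$ one has $p\mid\binom d2$ and, unless $p=3$ and $v_3(d)=1$, also $p\mid\binom d3$.

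For the reduction, fix an $F$-subspace $W$ of $\ker c_1$ with $W\cap F=\{0\}$: take $W=\ker c_1$ if $\car F\nmid d$, and $W=H\cap\ker c_1$ for a fixed $F$-hyperplane $H$ of $V$ not containing $1$ otherwise (then $\dim_FW\geq d-2\geq 2$). A nontrivial zero of the cubic form $c_3|_W$ is exactly an element $a\in V\setminus F$ with $c_1(a)=c_3(a)=0$; so by Proposition~\ref{P:cubic-Springer} applied along a tower of separable quadratic extensions, it suffices to produce such a zero over a suitable such tower $F'/F$. Using Proposition~\ref{L:Merk-triv} --- preceded, when $\s$ is unitary with $Z(A)$ a field, by extending scalars to $Z(A)$, which reduces to the unitary inner type --- we may arrange $\ind A_{F'}$ odd; when $\s$ is orthogonal or symplectic this forces $(A_{F'},\s_{F'})$ to be split (then $\ind A$ is a power of $2$), and in the unitary case it makes $\coind A_{F'}$ a multiple of $4$ (as $\deg A_{F'}=d$ is a multiple of $4$ and $\ind A_{F'}$ is odd). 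In every case $4$ divides both $\coind A_{F'}$ and $\kap(A_{F'},\s_{F'})=d$, so by Corollary~\ref{cor:existsplitneat} there is a split neat subalgebra $N\simeq {F'}^{4}$ of $(A_{F'},\s_{F'})$, and by Theorem~\ref{thm:neat-embed-maxetsym} an \'etale subalgebra $L$ of $\Syms(\s_{F'})$ with $N\subseteq L$, $[L:F']=d$ and $L$ free as an $N$-module. Writing $e_1,\dots,e_4$ for the primitive idempotents of $N$, one sees by reduction to the split case that $\chi_b=\prod_{i=1}^4(X-\beta_i)^{d/4}$ for $b=\sum_{i=1}^4\beta_ie_i\in N$.

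Over $F'$ I then take $a=e_1-e_2+e_3-e_4$ if $\car F\neq 2$ and $a=e_1+e_2$ if $\car F=2$, so that $\chi_a=(X^2-1)^{d/2}$, respectively $\chi_a=(X-1)^{d/2}X^{d/2}$. A short computation (using $4\mid d$ in characteristic~$2$) gives $c_1(a)=c_3(a)=0$, and $a\notin F'$ because $a$ is not a scalar multiple of the identity of $N$. It remains to turn $a$ into a nontrivial zero of $c_3|_W$ over $F'$: writing $a=h'+\mu'\cdot 1$ with $h'\in W\otimes_FF'$ and $\mu'\in F'$ (where $\mu'=0$ if $W=\ker c_1$), one has $h'\neq 0$; $h'$ lies in $\ker c_1$ (if $\mu'\neq 0$ this uses that $c_1(1)=d$ vanishes in $F'$); and the shift identity for $c_3$, together with the vanishing of $d-2$ and $\binom d3$ in characteristic~$2$, and of $\binom d3$ and $c_2(a)=-\tfrac d2$ in odd characteristic dividing $d$, gives $c_3(h')=0$. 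In the one remaining case, $\car F=3$ with $v_3(d)=1$, we have $\binom d3\neq 0$ in $F$, so no nonzero scalar is a zero of $c_3$ and it suffices to descend a nontrivial zero of $c_3|_{\ker c_1}$ directly. The main obstacle is precisely this last point --- ensuring the zero obtained after each quadratic descent does not collapse onto the scalar line $F\cdot 1$ --- which is exactly what the characteristic bookkeeping above is for.
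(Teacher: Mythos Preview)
Your argument is correct and follows the paper's strategy: recast the problem as finding a nontrivial zero of the cubic form $c_3$ on a complement of $F$ inside $\ker c_1$, exhibit such a zero after a tower of separable quadratic extensions, and descend via Springer. The paper's execution is lighter in two respects. First, it only needs a split neat subalgebra of degree~$2$, which exists as soon as $\coind A$ is even (the condition reached at the top of the tower); the element $a$ is then simply any nonscalar in $L\simeq F'^{\,2}$, and your embedding $N\subseteq L$ with $[L:F']=d$ is superfluous. Second, in characteristic~$2$ the paper observes directly from $\chi_{x+\lambda}(X)=\chi_x(X-\lambda)$ that $c_3(x+\lambda)=c_3(x)$ for $x\in\ker c_1$ and $\lambda\in F$, so $c_3$ descends to a well-defined cubic form on $\ker c_1/F$; this single observation replaces your hyperplane complement $W$, the projection $a\mapsto h'$, and the shift-identity computation. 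Your more laborious bookkeeping does buy something, though: the paper's assertion that $F\cap\ker c_1=0$ whenever $\car F\neq 2$ actually needs $\car F\nmid d$, and your case analysis for odd primes dividing $d$ (including the separate treatment of $p=3$ with $v_3(d)=1$) fills this gap---one that is, however, invisible in the paper's sole application, where $d=4$.
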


\begin{proof}
  Let $d=\kap(A,\sigma)$.
  We first consider the situation where $\coind(A)$ is even.  
 Then by Corollary~\ref{cor:existsplitneat} there exists a split neat $F$-subalgebra $L$ of $(A,\s)$ with $[L:F]=2$.  
If $\car F \neq 2$ we choose an element $a \in L\setminus F$ with $a^2=1$ and obtain that $\chi_a(X)=(X^2-1)^{d/2}$.  If $\car F =2$ then we choose $a\in L\setminus F$ with $a^2=a$ and obtain that
  $\chi_a(X)=X^{d}+X^{d/2}$.  In either case we have that $a\in \Syms(\sigma)\setminus F$ and $\chi_a(X)\in F[X^2]$. Hence $a$ has the desired
  properties.

In the general case, by Proposition~\ref{L:Merk-triv} there exists $r\in\nat$ and a sequence of quadratic field extensions
  $(F_i/F_{i-1})_{i=1}^r$ with $F_0=F$ such that $\ind A_{F_r}$ is odd.
In particular, $\coind A_{F_r}$ is even.

Let $W=\ker(c_1)\subseteq \Syms(\sigma)$.  If $\car F \neq 2$, then $F\cap
  W=0$, and we  consider the cubic form $f=c_3$ on $W$.  If $\car F =2$, then we
  have $c_3(x+a)=c_3(x)$ for every $x\in W$ and every $a\in F$, for $\chi_{x+a}(X)=\chi_x(X-a)$. 
In this 
  case we consider the cubic form 
  $f\colon W/F\to F, x+F\mapsto c_3(x)$.  In each case the validity of the statement is equivalent
  to the existence of a nontrivial zero of the cubic form~$f$.
By the special case considered above, $f$ has a nontrivial zero in $F_r$. Since $f$ is a cubic form and $F_i/F_{i-1}$ is a quadratic extension for $i=1$, \dots, $r$, we conclude by Proposition~\ref{P:cubic-Springer} that $f$ has a nontrivial zero over $F$.
\end{proof}

An \'etale $F$-algebra $K$ is called \emph{biquadratic} (resp.~\emph{triquadratic}) if it is isomorphic to the tensor product of two (resp.~three) quadratic \'etale $F$-algebras.

The following result extends \cite[Corollary~6.2 and Theorem 9.1~(1)]{GPT09}.

\begin{thm}\label{T:neat-quad-sym-exist}
  Let $(A,\sigma)$ be an $F$-algebra with involution with
  $\kap(A,\sigma)=4$. Then $(A,\sigma)$ contains a neat biquadratic $F$-subalgebra.
\end{thm}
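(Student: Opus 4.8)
The plan is to build the neat biquadratic subalgebra in two stages, starting from an element produced by Lemma~\ref{lem:c1c3-vanish} and then invoking the machinery of Section~\ref{S:neat-quad}.

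\textbf{Step 1: produce a neat quadratic subalgebra.} By Lemma~\ref{lem:c1c3-vanish} there exists $a\in\Syms(\s)\setminus F$ with $c_1(a)=c_3(a)=0$, so that $\chi_a(X)=X^4+c_2(a)X^2+c_4(a)\in F[X^2]$. Write $\chi_a(X)=g(X^2)$ with $g(Y)=Y^2+c_2(a)Y+c_4(a)\in F[Y]$. Since $g(a^2)=0$, the subalgebra $F[a^2]$ has dimension $1$ or $2$ over $F$; I would first dispose of the degenerate possibility (if $a^2\in F$ then $F[a]$ is already a quadratic subalgebra in $\Syms(\s)$, or $a$ itself generates something one can handle directly — in fact if $g$ is irreducible or splits with distinct roots one gets $[F[a^2]:F]=2$, and otherwise $g=(Y-\lambda)^2$ forces $a^2=\lambda\in F$ after checking $a^2$ is a root of a repeated factor, reducing to the easy case). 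In the main case $K:=F[a^2]$ is a quadratic \'etale subalgebra of $A$ contained in $\Symm(\s)$, hence neat in $(A,\s)$ by Proposition~\ref{P:inv-sym-restrict-type} (or by Proposition~\ref{prop:maxneat} once one knows it has the right dimension relative to capacity — but here $[K:F]=2<4=\kap(A,\s)$, so one argues via Proposition~\ref{P:inv-sym-restrict-type}, being careful in the unitary inner type case to use Proposition~\ref{P:inner-unitary-neat}).

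\textbf{Step 2: extend to a biquadratic neat subalgebra.} Having a neat quadratic subalgebra $K$ of $(A,\s)$, apply Theorem~\ref{thm:neatquad} to obtain a neat subalgebra $L$ of $(A,\s)$, $F$-linearly disjoint from $K$, centralising $K$, with $KL$ neat and $[KL:F]=\kap(A,\s)=4$. Since $[K:F]=2$ and $[KL:F]=4$ with $K$ and $L$ linearly disjoint, $[L:F]=2$, so $L$ is a neat quadratic subalgebra and $KL\simeq K\otimes_F L$ is a product of two quadratic \'etale $F$-algebras, i.e.\ biquadratic. Thus $KL$ is the desired neat biquadratic $F$-subalgebra of $(A,\s)$.

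\textbf{Main obstacle.} The delicate point is Step~1: Lemma~\ref{lem:c1c3-vanish} only guarantees $a\notin F$ with $c_1(a)=c_3(a)=0$, but it does not immediately guarantee that $\chi_a$ (equivalently $g$) has \emph{separable} or otherwise well-behaved factors, so $F[a]$ and $F[a^2]$ need not be \'etale. One must argue that one can nevertheless extract a genuine quadratic \'etale subalgebra from the data: either by choosing $a$ more carefully, or by analysing the factorisation of $g$ over $F$ and observing that each irreducible factor of $g$, evaluated at $a^2$, cuts out a subalgebra, at least one of whose quotients is a separable quadratic field extension of $F$ (or $F\times F$). Handling the characteristic-two orthogonal case, where $\Syms(\s)=\Symd(\s)$ and idempotents in $\Symd(\s)$ must be avoided, requires the extra care encapsulated in Proposition~\ref{P:idempotsymd}. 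Once a neat quadratic $K$ is in hand, Step~2 is a direct appeal to Theorem~\ref{thm:neatquad} and presents no difficulty.
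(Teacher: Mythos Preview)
Your overall plan is right, and Step~2 is exactly how the paper proceeds once a neat quadratic subalgebra $K$ is in hand. The gap is in Step~1: you correctly flag the separability issue as the ``main obstacle'' but do not resolve it. Nothing in your argument actually produces a \emph{separable} quadratic $F$-algebra from the element $a$ furnished by Lemma~\ref{lem:c1c3-vanish}. In characteristic~$2$ it is entirely possible that both $F[a]$ (when $a^2\in F$) and $F[a^2]$ (when $a^2\notin F$) are purely inseparable quadratic extensions of $F$, and neither ``choosing $a$ more carefully'' nor ``analysing the factorisation of $g$'' will repair this without a genuinely new idea. (Incidentally, in the characteristic-two orthogonal case one has $\Syms(\s)=\Symm(\s)\neq\Symd(\s)$, the opposite of what you wrote; but this is not the real difficulty.)

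The paper closes the gap as follows. First it disposes of the non-division case outright via Corollary~\ref{cor:existsplitneat} with $r=2$, so one may assume $A$ is a division algebra; then $F[a]$ and $F[a^2]$ are automatically fields, and one sets $E=F[a]$ or $E=F[a^2]$ according as $a^2\in F$ or not, obtaining a quadratic field extension $E/F$ inside $\Symm(\s)$. If $E/F$ is separable, done. If $E/F$ is inseparable (so $\car F=2$), the key move is to pass to the centraliser $C=C_A(E)$, an $E$-algebra with involution of degree $\tfrac12\deg A$. When $\kap(C,\s_C)=2$, Theorem~\ref{thm:capmaxdim} supplies $y\in\Symm(\s_C)\setminus E$ with $y^2+y\in E$; setting $u=y^2$ one gets $u\in\Symm(\s)\setminus F$ and $u^2+u=(y^2+y)^2\in F$ by Frobenius, so $K=F[u]$ is the sought separable quadratic. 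The only situation with $\kap(C,\s_C)\neq2$ is when $\s$ is symplectic of degree~$8$ and $\s_C$ is orthogonal of degree~$4$; but then the orthogonal case of the theorem itself---already established by the preceding argument---applied to $(C,\s_C)$ over $E$ again produces such a $y$. This Frobenius trick together with the recursive reduction from symplectic to orthogonal is the missing ingredient in your Step~1.
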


\begin{proof}
  By Theorem~\ref{thm:neatquad} it suffices to show that $(A,\s)$ contains a neat quadratic $F$-subalgebra $K$.  If
  $A$ has zero-divisors, then we may conclude this 
  by applying Corollary~\ref{cor:existsplitneat} with $r=2$.
Hence we assume that $A$ is a division $F$-algebra.

By Lemma~\ref{lem:c1c3-vanish} there exists an element $a\in
  \Syms(\sigma)\setminus F$ with $c_1(a)=c_3(a)=0$, whereby $\chi_a(X)=X^4+c_2(a)X^2+c_4(a)\in F[X^2]$. 
In particular $[F(a^2):F]\leq 2$.  
We set $E=F[a]$ if $a^2\in F$ and $E=F[a^2]$ otherwise.  Then $E$ is a quadratic field extension of $F$ contained in $\Symm(\sigma)$.

  If $E$ is separable over $F$, then $E$ is a neat subalgebra of
  $(A,\sigma)$ and we may take $K=E$.
  Suppose now that the quadratic extension $E$ is inseparable. In
  particular $\car F =2$.  We consider $C=C_A(E)$ and write $\sigma_C$
  for the restriction of $\sigma$ to $C$. If we can find 
  $y\in \Symm(\sigma_C)\setminus E$ such that $y^2+y\in E$, we obtain
  for $u=y^2$ that
  $u\in\Symm(\sigma_C)\setminus E\subseteq \Symm(\sigma)\setminus F$
  and $u^2+u=(y^2+y)^2\in F$, whereby $F[u]$ is a separable quadratic
  extension of $F$ contained in $\Symm(\sigma)$, so that we may take
  $K=F[u]$.  It therefore suffices to 
  show the existence of such an element $y$.

  Note that $(C,\sigma_C)$ is an $E$-algebra with involution and
  $\deg(C)=\frac{1}{2}\deg(A)$.  If $\kap(C,\sigma_C)=2$, then the
  existence of $y\in \Symm(\sigma_C)\setminus E$ with $y^2+y\in E$
  follows by Theorem~\ref{thm:capmaxdim}.  The only possibility to
  have $\kap(C,\sigma_C)\neq 2$ is that $(A,\sigma)$ is symplectic of
  degree $8$ and $(C,\sigma_C)$ is orthogonal of degree $4$.

  In particular, the statement holds in the case where $\sigma$ is
  orthogonal. Applying this to $(C,\s_C)$ when
   $\sigma_C$ is orthogonal and $\deg(C)=4$, we obtain  a separable quadratic extension of $E$ inside $\Symm(\sigma_C)$ and thus an element $y\in \Symm(\sigma_C)\setminus E$ with
  $y^2+y\in E$, as desired. 
\end{proof}

The proof of our next result uses a corestriction argument on central simple algebras.
Consider a separable quadratic field extension $K/F$ and  a central simple $K$-algebra $B$.
We refer to \cite[\S3.B]{BOI} for the definition and the basic properties of the central simple $F$-algebra $\Cor_{K/F}(B)$, the corestriction (or norm) of $B$ from $K$~to~$F$  (which is denoted $N_{K/F}(B)$ in \cite{BOI}).

\begin{lem}\label{L:AlbertDraxlTransfer}
Assume that $B$ is a $K$-quaternion algebra.
  \label{cor:Albert}
Then $B$ contains a quadratic \'etale $F$-algebra linearly disjoint from $K$
 if and only if $\Cor_{K/F}(B)$ is not a division algebra.
\end{lem}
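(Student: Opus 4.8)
The plan is to prove the two implications separately; throughout, $\gamma$ denotes the nontrivial $F$-automorphism of~$K$.

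\textbf{From a subalgebra to the corestriction.} Suppose $B$ contains a quadratic \'etale $F$-subalgebra~$E$ that is $F$-linearly disjoint from~$K$. If $E$ is not a field, then $B$ has zero divisors, so $B\simeq\matr{2}(K)$ and $\Cor_{K/F}(B)$ is split, hence not a division algebra. Otherwise $E$ is a separable quadratic field extension of~$F$, and $L=EK\simeq E\otimes_FK$ is a separable quadratic extension of~$K$ contained in~$B$; since $[L:K]=2=\deg B$ it is a maximal subfield, so $B\simeq(L/K,\gamma_L,b)$ as a cyclic $K$-algebra, where $\gamma_L$ is the nontrivial automorphism of~$L/K$ and $b\in\mg{K}$. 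Because $L=E\otimes_FK$, the automorphism $\gamma_L$ preserves~$E$ and restricts there to the nontrivial automorphism~$\gamma_E$ of~$E/F$, and the projection formula for the corestriction (a basic property, see~\cite[\S3.B]{BOI}) yields that $\Cor_{K/F}(B)$ is Brauer equivalent to the quaternion $F$-algebra $(E/F,\gamma_E,N_{K/F}(b))$; as $\Cor_{K/F}(B)$ has degree~$4$, it has index at most~$2$ and is not a division algebra.

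\textbf{From the corestriction to a subalgebra.} Suppose $\Cor_{K/F}(B)$ is not a division algebra. Having degree~$4$, it has index at most~$2$, and its class lies in the $2$-torsion of $\mathrm{Br}(F)$ since $2\cdot[\Cor_{K/F}(B)]=\Cor_{K/F}(2[B])=0$. If $B$ is not a division algebra, then $B\simeq\matr{2}(K)$ contains $F\times F$, which is $F$-linearly disjoint from~$K$, and we are done; so assume $B$ is a division algebra. I would detect the desired subalgebra via quadratic forms, treating $\car F\neq 2$ first. Let $n_B$ be the norm form of~$B$, an anisotropic $2$-fold Pfister form over~$K$, and $n_B^0$ its pure subform, so $n_B=\la1\ra\perp n_B^0$ with $\dim n_B^0=3$. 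Fix a nonzero $F$-linear map $s\colon K\to F$ with $\ker s=F$ and let $s_*$ be the associated transfer. Then $s_*(\la1\ra)$ is a hyperbolic plane, so $s_*(n_B)$ is Witt-equivalent to $s_*(n_B^0)$; moreover $s_*(n_B)$ has trivial discriminant and Clifford invariant $\Cor_{K/F}([B])=[\Cor_{K/F}(B)]$, since the cohomological invariants commute with the transfer~\cite{EKM}. Thus $s_*(n_B)$ lies in $I^2F$ with Clifford invariant of index at most~$2$; since an anisotropic $6$-dimensional form in $I^2F$ is similar to an Albert form and hence has Clifford invariant of index exactly~$4$, the anisotropic part of $s_*(n_B)$ has dimension at most~$4$. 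Therefore $s_*(n_B^0)$, of dimension~$6$ and Witt-equivalent to $s_*(n_B)$, is isotropic over~$F$.

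\textbf{Unwinding, and the obstacle.} An isotropic vector of $s_*(n_B^0)$ is a nonzero pure quaternion $y\in B$ with $\Nrd(y)\in\mg{F}$ (nonzero because $B$ is a division algebra) and $\Trd(y)=0$; then $y^2\in\mg{F}$, the algebra $E=F[y]$ is a quadratic \'etale $F$-subfield of~$B$, and $y\notin K$ because a nonzero pure quaternion is not central, so $E\neq K$ and $E$ is $F$-linearly disjoint from~$K$ in~$B$. When $\car F=2$ the same scheme applies but needs more care: separability of~$E$ now forces one to look for $y\in B$ with $\Trd(y)\in\mg{F}$ (rather than $\Trd(y)=0$) and $\Nrd(y)\in F$, that is, for an isotropic vector of the transfer of the nonsingular form $n_B$ lying off the hyperplane $\ker(\Trd)$; producing such a vector is the main technical point in that characteristic, and one could instead simply cite the Albert--Draxl transfer theorem here. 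Overall the converse implication is the hard direction, and its crucial, non-formal ingredient is the classification of the anisotropic part of forms in $I^2F$ by the index of their Clifford invariant --- equivalently, Albert's theorem on biquaternion algebras --- which is precisely what converts the index bound on $\Cor_{K/F}(B)$ into the sought-after element of~$B$.
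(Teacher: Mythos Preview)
Your argument is correct in characteristic different from~$2$, and you honestly flag the remaining technical work in characteristic~$2$. Note, however, that the paper does not prove this lemma at all: its entire proof is a citation of \cite[(16.28)]{BOI} for $\car F\neq2$ and of \cite{BGBT-AlbertDraxlTransfer} for arbitrary characteristic.

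What you have written is essentially the standard proof that those references contain. The forward direction via the projection formula for corestriction of cyclic algebras is routine. For the converse in $\car F\neq2$, your route---Scharlau transfer of the norm form along a trace-zero functional, reduction to a $6$-dimensional form in $I^2F$ with Clifford invariant $[\Cor_{K/F}(B)]$, and then Albert's theorem that an anisotropic Albert form forces index~$4$---is exactly the classical quadratic-form argument. Your identification of the isotropic vector with a pure quaternion $y$ satisfying $y^2\in\mg{F}$ and the resulting \'etale $F$-subalgebra $F[y]$ is clean and correct. For $\car F=2$ you correctly diagnose why the same construction fails (separability requires $\Trd(y)\in\mg{F}$ rather than $\Trd(y)=0$) and defer to an outside result, just as the paper does.

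In short, you have supplied a genuine proof where the paper gives only pointers; there is no discrepancy in approach to discuss, since the paper offers none.
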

\begin{proof}
If $\car F\neq 2$, a proof is given in \cite[(16.28)]{BOI}.
We refer to \cite{BGBT-AlbertDraxlTransfer} for a proof in arbitrary characteristic.
\end{proof}

The following result extends \cite[Lemma~6.1]{GPT09}.

\begin{thm}\label{T:symp-deg8-triquad}
Assume that $(A,\s)$ is symplectic of degree $8$.
Then $A$ contains a $\s$-stable triquadratic \'etale $F$-subalgebra.
Moreover, any neat biquadratic $F$-sub\-algebra of $(A,\s)$ is contained in a $\s$-stable triquadratic \'etale $F$-subalgebra of~$A$.
\end{thm}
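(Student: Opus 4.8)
The plan is to prove the second assertion; the first follows from it by Theorem~\ref{T:neat-quad-sym-exist}, which provides a neat biquadratic $F$-subalgebra of~$(A,\s)$ since $\kap(A,\s)=4$. So let $K$ be a neat biquadratic $F$-subalgebra of~$(A,\s)$. As $[K:F]=4=\kap(A,\s)$ and $K\subseteq\Symm(\s)$, the involution $\s$ fixes $K$ elementwise; write $C=C_A(K)$ and $\s_C=\s|_C$. By neatness and the Double Centraliser Theorem (Proposition~\ref{prop:neat}), $C$ is an Azumaya $K$-algebra of degree~$2$ with $Z(C)=K$, and $\s_C$ is $K$-linear, of the first kind, and of the same type as~$\s$ by Proposition~\ref{P:inv-sym-restrict-type}; hence $\s_C$ is the canonical involution of~$C$.

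I would first reduce the problem to producing a quadratic \'etale $F$-subalgebra $M$ of $C$ that is $F$-linearly disjoint from~$K$. For such an~$M$, the subalgebra $T=KM\cong M\otimes_FK$ of~$A$ is triquadratic \'etale and contains~$K$, and it is $\s$-stable: choosing $m$ with $M=F[m]$, the algebra $K[m]=T$ has rank $2=\deg C$ over~$K$, so the reduced characteristic polynomial of~$m$ over~$K$ computed in~$C$ equals the minimal polynomial of~$m$ over~$F$, which lies in $F[X]$; therefore $\Trd_{C/K}(m)\in F$ and $\s(m)=\s_C(m)=\Trd_{C/K}(m)-m\in M$, and since $\s$ fixes~$K$ this gives $\s(T)=T$. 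Note that this argument does not assume $K$ to be a field.

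The crux is a corestriction computation. Since $\s$ is of the first kind, $A$ has exponent dividing~$2$, so $2[A]=0$ in $\mathrm{Br}(F)$. Using the identification $[C_A(K)]=\mathrm{res}_{K/F}[A]$ in $\mathrm{Br}(K)$ (taken componentwise over the field factors of~$K$, the sign being immaterial here) together with the fact that $\Cor_{K/F}\circ\mathrm{res}_{K/F}$ is multiplication by $[K:F]$, one finds $[\Cor_{K/F}(C)]=4[A]=0$: the algebra $\Cor_{K/F}(C)$ is split, in particular not a division algebra. The existence of the required~$M$ should then follow from the Albert--Draxl transfer. As Lemma~\ref{L:AlbertDraxlTransfer} is stated only for quadratic extensions, I would iterate it along a quadratic \'etale subextension $K_1\subseteq K$, writing $K=K_1\otimes_FK_2$: here $[C]=\mathrm{res}_{K/K_1}[A_{K_1}]$, so $[\Cor_{K/K_1}(C)]=2[A_{K_1}]=0$ and $\Cor_{K/K_1}(C)$ is split too, and Lemma~\ref{L:AlbertDraxlTransfer} over~$K_1$ yields a quadratic \'etale $K_1$-subalgebra $N$ of~$C$ that is $K_1$-linearly disjoint from~$K$; thus $N$ is a four-dimensional \'etale $F$-algebra with $K_1\subseteq N$.

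The main obstacle is the final step: passing from such an~$N$ to a quadratic \'etale $F$-subalgebra $M$ of~$C$ that is $F$-linearly disjoint from~$K$ -- equivalently, arranging that $N$ be a compositum $K_1\otimes_FM$ of~$K_1$ with a quadratic $F$-algebra, rather than, say, a cyclic quartic extension of~$F$. I expect the resolution to use that $\Cor_{K/K_1}(C)$ is not merely non-division but split, which should exhibit a whole family of admissible algebras~$N$ inside~$C$ -- namely the quadratic \'etale $K_1$-algebras whose compositum with~$K$ embeds in~$C$ as a maximal \'etale subalgebra -- from which one of the desired shape can be selected, possibly after normalising by means of the canonical involution~$\s_C$. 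Once $M$ is found, the reduction of the second paragraph finishes the proof uniformly, without a case distinction on~$K$.
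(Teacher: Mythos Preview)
Your identified obstacle is a genuine gap, and I do not see how to close it along the lines you sketch. Knowing that $\Cor_{K/K_1}(C)$ is split (rather than merely non-division) does not, by itself, force the quadratic \'etale $K_1$-algebra $N$ produced by Lemma~\ref{L:AlbertDraxlTransfer} to descend to~$F$, i.e.\ to be of the form $K_1\otimes_F M$; the lemma controls the Brauer class of the quaternion algebra, not the Galois structure of the splitting subalgebra, and a cyclic quartic $N/F$ containing~$K_1$ is a priori a perfectly good output. (A secondary issue: your appeal to Proposition~\ref{P:inv-sym-restrict-type} requires $K$ to be a field, so the claim that $\s_C$ is the canonical involution needs a separate justification when your biquadratic $K$ is not a field.)

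The paper's proof avoids this descent problem by centralising less. After disposing of the case where the biquadratic algebra (called $L$ there) is split by a direct idempotent construction, it picks a quadratic \emph{field} $K\subseteq L$ and sets $C=C_A(K)$, a symplectic $K$-algebra of degree~$4$ and capacity~$2$. Then $L$ is neat quadratic in $(C,\s_C)$, and Corollary~\ref{T:cap2} places $L$ inside a $\s_C$-stable $K$-quaternion subalgebra $Q\subseteq C$; its centraliser $Q'=C_C(Q)$ is another $K$-quaternion algebra on which $\s$ restricts to the canonical involution. Now the biquadratic hypothesis enters: writing $L=K\otimes_F M$ with $M$ a quadratic \'etale $F$-algebra already inside~$Q$, Lemma~\ref{L:AlbertDraxlTransfer} (forward direction) shows $\Cor_{K/F}(Q)$ is not division; since $\Cor_{K/F}(C)$ is split one gets $\Cor_{K/F}(Q)\simeq\Cor_{K/F}(Q')$, and Lemma~\ref{L:AlbertDraxlTransfer} (reverse direction) then produces a quadratic \'etale $F$-algebra $K'\subseteq Q'$ linearly disjoint from~$K$. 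As $\s|_{Q'}$ is canonical, $K'$ is automatically $\s$-stable, and $LK'$ is the required triquadratic algebra. The point is that Albert--Draxl is invoked only across the single quadratic step $K/F$, so its output is already an $F$-algebra and no further descent is needed; the price is the $Q\otimes Q'$ decomposition of the larger centraliser, which your approach bypassed.
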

\begin{proof}
In view of Theorem~\ref{T:neat-quad-sym-exist} it suffices to prove the second part of the statement.
Thus let $L$ be a neat biquadratic $F$-subalgebra of $(A,\s)$.

Assume first that $L$ is split.
Let $e_1,\dots,e_4$ be the primitive idempotents of $L$.
For $i=1,\dots,4$ by identifying $F$ with $Fe_i$ we obtain that
$e_iAe_i$ is a quaternion $F$-algebra Brauer equivalent to $A$ and $\s|_{e_iAe_i}$ is its canonical involution.
As $\deg A=8$ and $A$ contains a split biquadratic \'etale $F$-subalgebra, we have $\ind A\leq 2$.
If $\ind A=2$ then we fix $a\in F$ with $4a\neq -1$ such that $F[X]/(X^2-X-a)$ is a splitting field of $A$, otherwise we set $a=0$.
In either case, we obtain for $i=1,\dots,4$ an element $f_i\in e_iAe_i\setminus Fe_i$ with $f_i^2=f_i+ae_i$.
Then $f=f_1+f_2+f_3+f_4$ is such that $f+\s(f)=1$ and $f^2=f+a$.
Hence $L[f]$ is a $\s$-stable triquadratic $F$-subalgebra of $A$.

Assume now that $L$ is not split. 
Then $L$ contains a quadratic field extension $K$ of $F$. 
With the notation of Section~\ref{S:neat-quad} we obtain an 
$F$-algebra with involution $(C,\s_C)$.
By Proposition~\ref{P:inv-sym-restrict-type} the involution $\s_C$ is symplectic and $\kap(C,\s_C)=2$.
Since $[L:K]=2=\kap(C,\s_C)$, by Proposition~\ref{prop:maxneat} the \'etale $K$-algebra $L$ is neat in $(C,\s_C)$.
By Corollary~\ref{T:cap2} it follows that $L$ is contained in a $\s_C$-stable quaternion $K$-subalgebra $Q$ of $C$.
We set $Q'=C_C(Q)$ and observe that $Q'$ is a $\s_C$-stable quaternion $K$-subalgebra of $C$.
We set $\s_Q=\s|_Q$ and $\s_{Q'}=\s|_{Q'}$ and obtain that $(Q,\s_Q)$ and $(Q',\s_{Q'})$ are quaternion $K$-algebras with involution such that
$$(C,\s_C)\simeq (Q,\s_Q)\otimes (Q',\s_{Q'})\,.$$
Since $L\subseteq\Symm(\s_Q)$ it follows from Theorem~\ref{thm:capmaxdim} that $\s_Q$ is orthogonal.
As $\s_C$ is symplectic, it follows by \cite[(2.23)]{BOI} that $\s_{Q'}$ is symplectic.
Hence $\s_{Q'}$ is the canonical involution of $Q'$.

The central simple $K$-algebra $C$ is Brauer equivalent to
$A_K$. Since $A$ carries an $F$-linear involution, $A\otimes_FA$ is
split.
This implies that $\Cor_{K/F}(C)$ is split, hence
$\Cor_{K/F}(Q)\simeq\Cor_{K/F}(Q')$. Since $L$ is biquadratic, we have
$L\simeq K\otimes_FM$ for some quadratic \'etale $F$-algebra $M$. As
$M\subseteq Q$, Lemma~\ref{L:AlbertDraxlTransfer} shows that $\Cor_{K/F}(Q)$ is not a
division algebra. Therefore $\Cor_{K/F}(Q')$ is not a division
algebra. Hence, by Lemma~\ref{L:AlbertDraxlTransfer} there exists  a quadratic \'etale
$F$-algebra $K'\subseteq Q'$ linearly disjoint from $K$.
Note that $K'$ is $\s$-stable, for $\s|_{Q'}$ is the canonical
involution of $Q'$. 
Note further that $K'\subseteq Q'= C_C(Q)\subseteq C_A(L)$.
Hence $LK'$ is a $\s$-stable triquadratic \'etale $F$-subalgebra of $A$.
\end{proof}

It is known that every central division algebra of exponent two and
degree at most eight has a maximal subfield that is a separable
multiquadratic extension of the centre. This was shown by Albert
\cite[Chapter~XI, Theorem~9]{SofA} for degree four and by Rowen
\cite[Theorem~1]{Rowen84} for degree eight.  We obtain a new proof of
this statement.

\begin{cor}[Albert, Rowen]
  \label{C:AR}
  Let $A$ be a central simple $F$-algebra such that $\deg(A)$ divides
  $8$ and $A\otimes_FA$ is split. Then $A$ contains a maximal
  commutative subalgebra that is an \'etale multiquadratic
  $F$-algebra. 
\end{cor}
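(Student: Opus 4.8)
The plan is to reduce the statement about a central simple algebra $A$ with $\deg(A)\mid 8$ and $A\otimes_FA$ split to a statement about an algebra with involution, to which the previously developed machinery applies. First I would dispose of the trivial cases $\deg(A)\in\{1,2\}$, where $A$ itself, or any separable quadratic subalgebra, works. For $\deg(A)=4$ or $8$ the crucial observation is that the hypothesis $A\otimes_FA$ split means $A$ has exponent dividing two in the Brauer group, equivalently $A\simeq A^{\op}$, so $A$ carries an $F$-linear involution $\sigma$ of the first kind. By adjusting $\sigma$ (e.g.\ passing to $\Int(u)\circ\sigma$ for a suitable symmetric or symmetrized $u$, or simply invoking the existence of a symplectic involution on any algebra of even degree with $A\simeq A^{\op}$) I would arrange that $\sigma$ is symplectic. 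Then $\kap(A,\sigma)=\tfrac12\deg(A)$, which is $2$ when $\deg(A)=4$ and $4$ when $\deg(A)=8$.

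Next I would invoke the existence results already proven. In the case $\deg(A)=4$, $\kap(A,\sigma)=2$, so Corollary~\ref{T:cap2} (or directly Theorem~\ref{thm:capmaxdim} together with Proposition~\ref{prop:maxneat}) gives an \'etale $F$-subalgebra $L\subseteq\Symm(\sigma)$ with $[L:F]=2$; more precisely, using Theorem~\ref{thm:neatquad} starting from such a neat quadratic $K$ one obtains a neat biquadratic subalgebra of $(A,\sigma)$, which has dimension $4=\deg A$ and hence is a maximal commutative subalgebra. In the case $\deg(A)=8$, $\kap(A,\sigma)=4$, and Theorem~\ref{T:neat-quad-sym-exist} directly furnishes a neat biquadratic $F$-subalgebra $L$ of $(A,\sigma)$. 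Then Theorem~\ref{T:symp-deg8-triquad} shows that $L$ is contained in a $\sigma$-stable triquadratic \'etale $F$-subalgebra $M$ of $A$, which has dimension $8=\deg A$, so $M$ is a maximal commutative subalgebra of $A$ that is \'etale multiquadratic.

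The main obstacle, and the only genuinely non-formal point, is the first reduction: passing from ``$A\otimes_FA$ split'' to ``$A$ admits a symplectic involution.'' The equivalence of $A\otimes_FA$ being split with $A\simeq A^{\op}$ is standard Brauer-group theory; the existence of an involution of the first kind on $A$ when $A\simeq A^{\op}$ is a classical theorem of Albert (cf.\ \cite[(3.1)]{BOI}); and the possibility of choosing it symplectic on an algebra of even degree is again classical (cf.\ \cite[(2.6)]{BOI} and the surrounding discussion, or note that over a splitting field the adjoint involution of an alternating form is symplectic and descends). Everything after that is a bookkeeping matter of matching $\kap(A,\sigma)$ with $\tfrac12\deg A$ and quoting the right theorem, together with the elementary observation that a commutative \'etale subalgebra of $A$ of $F$-dimension $\deg A$ is automatically a maximal commutative subalgebra. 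I would therefore structure the write-up as: (1) handle $\deg A\le 2$; (2) put a symplectic involution on $A$; (3) split into the cases $\deg A=4$ and $\deg A=8$ and apply Theorem~\ref{thm:neatquad}, respectively Theorem~\ref{T:neat-quad-sym-exist} and Theorem~\ref{T:symp-deg8-triquad}; (4) observe that the resulting \'etale subalgebra has full dimension, hence is a maximal commutative subalgebra.
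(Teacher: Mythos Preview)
Your degree~$8$ case is correct and coincides with the paper's argument. The degree~$4$ case, however, has a genuine gap. With a symplectic involution on a degree~$4$ algebra you have $\kap(A,\sigma)=2$, and Theorem~\ref{thm:neatquad} only produces a neat subalgebra $KL$ with $[KL:F]=\kap(A,\sigma)=2$; starting from a neat quadratic $K$ you therefore get $L=F$ and $KL=K$, not a biquadratic subalgebra. Theorem~\ref{thm:capmaxdim} in fact confirms that $2$ is the maximal dimension of any \'etale $F$-subalgebra contained in $\Symm(\sigma)$ when $\sigma$ is symplectic of degree~$4$, so no biquadratic \'etale subalgebra of $A$ can lie in $\Symm(\sigma)$ in this setup.

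The paper sidesteps this by choosing an \emph{orthogonal} involution when $\deg A=4$, so that $\kap(A,\sigma)=4$ and Theorem~\ref{T:neat-quad-sym-exist} applies directly to produce a neat biquadratic subalgebra of dimension $4=\deg A$; the symplectic choice is reserved for $\deg A=8$. You could also repair your symplectic approach in degree~$4$: from a neat quadratic $K$ use Corollary~\ref{T:cap2} to obtain a $\sigma$-stable quaternion $F$-subalgebra $Q\supseteq K$, note that $C_A(Q)$ is again a quaternion $F$-algebra, and take the product of $K$ with any separable quadratic $F$-subalgebra of $C_A(Q)$. But this is extra work compared to simply switching the involution type, and it is not what Theorem~\ref{thm:neatquad} says.
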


\begin{proof}
Let $n\in\nat$ be such that $\deg(A)=2^n$.  If $n\leq 1$ then the statement is obvious. If $n=2$ then we choose an orthogonal involution $\sigma$ on $A$ and conclude by Theorem~\ref{T:neat-quad-sym-exist}.
If $n=3$ then we choose a symplectic involution $\s$ on $A$ and apply Theorem~\ref{T:symp-deg8-triquad}.
\end{proof}

\subsubsection*{Acknowledgments}
It is a pleasure to thank Holger Petersson for his comments on Jordan
algebraic aspects, Kader Bing\"ol for a critical reading of a preliminary version and
 the referee for pointing out a gap (and
  suggesting a patch) in a first version of
  Corollary~\ref{cor:existsplitneat}. 

This work was supported by the FWO Odysseus Programme (project
\emph{Explicit Methods in Quadratic Form Theory}), funded by the Fonds
Wetenschappelijk Onderzoek -- Vlaanderen. The third author
acknowledges support from the Fonds de la Recherche Scientifique--FNRS
under grants n$^\circ$~J.0014.15 and J.0149.17. Work on this paper was
initiated in 2011 while the first and the third author where,
respectively, Fellow and Senior Fellow of the Zukunftskolleg, whose
hospitality is gratefully acknowledged.  

\bibliographystyle{plain}

\begin{thebibliography}{11}

\bibitem{SofA} A.A.~Albert, Structure of Algebras. AMS Coll. Pub. 24,
  Providence, RI, 1961.

\bibitem{Bec16} K.J.~Becher. Splitting fields of central simple algebras of exponent two. \emph{J. Pure Appl. Algebra} {\bf 220} (2016): 3450--3453. 

\bibitem{BGBT-AlbertDraxlTransfer}
K.J.~Becher, N.~Grenier-Boley, J.-P.~Tignol. Transfer of quadratic forms and of quaternion algebras over quadratic field extensions \emph{Preprint arXiv 1610.06096} (2016). 

\bibitem{BGBT3}
K.J.~Becher, N.~Grenier-Boley, J.-P.~Tignol. Involutions with Pfister
forms characterizing decomposability, in preparation.

\bibitem{Bou-A4-7}
N.~Bourbaki.
\emph{Alg\`ebre, Chapitres 4--7}.
\newblock Springer-Verlag, Berlin-Heidelberg, 2007.

\bibitem{EKM}
R.~Elman, N.~Karpenko, A.~Merkurjev.
\newblock \textit{The algebraic and geometric theory of quadratic forms}.
\newblock American Mathematical Society Colloquium Publications, {\bf 56}, Amer.~Math.~Soc., Providence, RI, 2008.

\bibitem{GPT09}
S.~Garibaldi, R.~Parimala, J.-P.~Tignol.
\newblock Discriminant of symplectic involutions.
\newblock \emph{Pure Appl.~Math.~Q.} {\bf 5} (2009): 349--374.

\bibitem{GS06} P.~Gille and T.~Szamuely.  \newblock {\em Central
    simple algebras and Galois cohomology}.  \newblock Cambridge
  University Press, 2006.

\bibitem{BOI} M.-A. Knus, A. S. Merkurjev, M. Rost, and J.-P. Tignol.
  \newblock {\em The book of involutions}.  \newblock {\em American
    Mathematical Society Colloquium Publications} {\bf 44}. \newblock
  American Mathematical Society, Providence, RI, 1998.

\bibitem{McC}
K. McCrimmon, {\it A taste of Jordan algebras}, Universitext,
Springer, New York, 2004. 

\bibitem{Pierce}
R.~Pierce.
\newblock {\em Associative Algebras}.
\newblock Graduate texts in mathematics. Springer-Verlag, 1982.

\bibitem{Reiner}
I.~Reiner. \emph{Maximal orders}.
London Mathematical Society Monographs {\bf 5}.
Academic Press, London--New York, 1975.

\bibitem{Rowen84}
L.H.~Rowen. Division algebras of exponent~2 and characteristic~2,
\textit{J.~Algebra} \textbf{90} (1984): 71--83.

\end{thebibliography}

\end{document}